\newtheorem{theorem}{Theorem}[section]
\newtheorem{lemma}[theorem]{Lemma}
\newtheorem{proposition}[theorem]{Proposition}
\newtheorem{corollary}[theorem]{Corollary}
\newtheorem*{theoremA}{Theorem A}
\newtheorem*{theoremB}{Theorem B}
\theoremstyle{remark}
\newtheorem{remark}[theorem]{Remark}
\newtheorem{claim}{Claim}
\newtheorem*{claim*}{Claim}
\newcommand{\C}{\ensuremath{\mathbb{C}}}
\newcommand{\R}{\ensuremath{\mathbb{R}}}
\newcommand{\g}[1]{\ensuremath{\mathfrak{#1}}}
\DeclareMathOperator{\tr}{tr}
\DeclareMathOperator{\id}{id}
\DeclareMathOperator{\Ad}{Ad}
\DeclareMathOperator{\ad}{ad}
\DeclareMathOperator{\Exp}{Exp}
\begin{document}
\title{Polar actions on complex hyperbolic spaces}

\author[J.\ C.\ D\'{\i}az Ramos]{Jos\'{e} Carlos D\'{\i}az Ramos}
\author[M.\ Dom\'{\i}nguez V\'{a}zquez]{Miguel Dom\'{\i}nguez V\'{a}zquez}
\author[A.\ Kollross]{Andreas Kollross}

\address{Department of Mathematics, University of Santiago de Compostela, Spain}
\address{Instituto de Ciencias Matem\'{a}ticas (CSIC-UAM-UC3M-UCM), Madrid, Spain}
\address{Institut f\"{u}r Geometrie und Topologie, Universit\"{a}t Stuttgart, Germany}

\email{josecarlos.diaz@usc.es}
\email{miguel.dominguez@icmat.es}
\email{kollross@mathematik.uni-stuttgart.de}

\thanks{The first and second authors have been supported by projects EM2014/009, GRC2013-045, MTM2013-41335-P and MTM2016-75897-P with FEDER funds (Spain). The second author acknowledges financial support from the Spanish Ministry of Economy and Competitiveness, through the Severo Ochoa Programme for Centres of Excellence in R\&D (SEV-2015-0554) and a Juan de la Cierva-formaci\'{o}n fellowship.}

\begin{abstract}
We classify polar actions on complex hyperbolic spaces up to
orbit \mbox{equivalence}.
\end{abstract}


\subjclass[2010]{53C35, 57S20}

\keywords{Complex hyperbolic space, polar action, K\"{a}hler angle}

\maketitle


\section{Introduction and main results}
\label{sect:Intro}

A proper isometric Lie group action on a Riemannian manifold is
called \emph{polar} if there exists an immersed connected submanifold that
meets every orbit orthogonally. Such a submanifold is then called a
\emph{section} of the action. In the special case where the section is flat in its induced Riemannian metric, the action is called \emph{hyperpolar}.
In this article, we classify polar actions on complex hyperbolic spaces.

The motivation for our work can be traced back to the work of Dadok~\cite{Da85}, who classified polar representations on Euclidean spaces, and the paper of Palais and Terng~\cite{pt}, who proved fundamental properties of polar actions on Riemannian manifolds. Several years later, the problem of classifying hyperpolar actions on symmetric spaces of compact type was posed in~\cite{HPTT95}. Hyperpolar actions on irreducible symmetric spaces of compact type have been classified by the third-named author in~\cite{k02}. The classification of polar actions on compact symmetric spaces of rank one was obtained by Podest\`{a} and Thorbergsson~\cite{PT99}. This classification shows that there are finitely many examples of polar, non-hyperpolar actions on each compact symmetric space of rank one.

After having completed the classification of polar actions on irreducible Hermitian symmetric spaces of compact type,  Biliotti~\cite{Bi06} formulated the following conjecture: \emph{a polar action on an irreducible symmetric space of compact type and higher rank is hyperpolar.}

The third author proved that the conjecture holds for symmetric spaces with simple isometry group~\cite{K07}, and for the exceptional simple Lie
groups~\cite{K09}. In recent work, Lytchak~\cite{lytchak} obtained a decomposition theorem for the more general case of singular polar foliations on nonnegatively curved, not necessarily irreducible, Riemannian symmetric spaces. In particular, his result shows that polar actions on irreducible symmetric spaces of higher rank are hyperpolar if the cohomogeneity is at least three.
Kollross and Lytchak~\cite{KL} then completed the proof that Biliotti's conjecture holds and hence, that the classification of polar actions on irreducible symmetric spaces of compact type follows from~\cite{k02} and~\cite{PT99}. Note that the classification of polar actions on reducible symmetric spaces cannot be obtained from the corresponding classification in irreducible ones. However, by the structural result~\cite[Theorem~5.5]{KrLy}, it now only remains to classify nondecomposable hyperpolar actions on reducible spaces in order to obtain a complete classification of polar actions on symmetric spaces of compact type up to orbit equivalence.

But while polar actions on nonnegatively curved symmetric spaces are almost completely classified, the situation in the noncompact case remains largely open. Wu~\cite{wu} classified polar actions on real hyperbolic spaces and showed that, up to orbit equivalence, they are products of a noncompact factor (which is either the isometry group of a lower dimensional real hyperbolic space or the nilpotent part of its Iwasawa decomposition) and a compact factor (which comes from the isotropy representation of a symmetric space). In particular, there are only finitely many examples of polar actions on a real hyperbolic space up to orbit equivalence. Berndt and the first-named author obtained in~\cite{BD11a} the classification of polar actions on the complex hyperbolic plane~$\C H^2$, showing that there are exactly nine examples up to orbit equivalence. No other complete classification of polar actions was previously known for the symmetric spaces of noncompact type. In this paper we present the classification of polar actions on complex hyperbolic spaces of arbitrary dimension. It is a remarkable consequence of Theorems~A and~B below that the cardinality of the set of polar actions on a complex hyperbolic space~$\C H^n$, $n\geq 3$, is infinite, and hence the methods used in~\cite{BD11a} cannot be applied in this more general situation.

An important fact to bear in mind here is that, in general, the duality of Riemannian symmetric spaces cannot be applied to derive classifications of polar actions on noncompact symmetric spaces from the corresponding classifications in the compact setting. Nevertheless, there are certain situations where duality can be used to obtain partial classifications. The first and the third authors derived in~\cite{DK11} the classification of polar actions with a fixed point on symmetric spaces using this method. They have shown that a polar action with a fixed point in a reducible symmetric space splits as a product of polar actions on each factor. The third author explored this idea further and obtained a classification of polar actions by algebraic reductive subgroups~\cite{k11}.

Berndt and Tamaru~\cite{BT07} classified cohomogeneity one actions on
complex hyperbolic spaces, the quaternionic hyperbolic plane, and the
Cayley hyperbolic plane. The classification remains open in
quaternionic hyperbolic spaces of higher dimension, where the first and second authors have recently obtained new examples of such actions~\cite{DD13}, and in noncompact symmetric
spaces of higher rank. See~\cite{BT} for more information on
cohomogeneity one actions on symmetric spaces of noncompact type.

A polar action on a symmetric space of compact
type always has singular orbits. Motivated by this fact, Berndt,
Tamaru and the first author studied hyperpolar actions on symmetric
spaces that have no singular orbits~\cite{BDT10} and obtained a
complete classification. It was also shown in this paper that there
are polar actions on symmetric spaces of noncompact type and rank
higher than one that are not hyperpolar, unlike in the compact
setting. This classification has been improved in complex hyperbolic
spaces, where Berndt and the first author classified polar
homogeneous foliations~\cite{BD11b}. The main result of this paper
contains~\cite{BD11a},~\cite{BD11b} and~\cite{BT07} as particular
cases.

Let $\C H^n=G/K$ be the complex hyperbolic $n$-space, where
$G=SU(1,n)$ and $K=S(U(1)U(n))$ is the isotropy group of $G$ at some
point $o$. Consider the Cartan decomposition $\g{g}=\g{k} \oplus
\g{p}$ with respect to $o$. Choose a maximal abelian subspace~$\g{a}$
of~$\g{p}$ and let $\g{g} =
\g{g}_{-2\alpha}\oplus\g{g}_{-\alpha}\oplus\g{g}_0\oplus\g{g}_{\alpha}\oplus\g{g}_{2\alpha}$
be the root space decomposition with respect to~$\g{a}$. Set $\g{k}_0
= \g{k} \cap \g{g}_0 \cong \g{u}(n-1)$. Since $\g{k}_0$ acts on the
root space~$\g{g}_\alpha$, the center of $\g{k}_0$ induces a natural
complex structure~$J$ on $\g{g}_\alpha$ which makes it isomorphic to
$\C^{n-1}$. On the other hand, we call a subset of~$\g{g}_\alpha$ a
\emph{real subspace} of $\g{g}_\alpha$ if it is a linear subspace
of~$\g{g}_{\alpha}$, where $\g{g}_{\alpha}$ is viewed as a real
vector space. Assume $\g{g}_\alpha$ is endowed with the inner product
given by the restriction of the Killing form of~$\g{g}$. A real
subspace~$\g{w}$ of $\g{g}_\alpha$ is said to be \emph{totally real}
if $\g{w} \perp J(\g{w})$.

In this paper, we prove the following classification result:

\begin{theoremA}
For each of the Lie algebras $\g{h}$ below, the corresponding
connected subgroup of $U(1,n)$ acts polarly on~$\C H^n$:

\begin{enumerate}[{\rm (i)}]

\item $\g{h} = \g{q} \oplus \g{so}(1,k)  \subset \g{u}(n-k)
    \oplus \g{su}(1,k)$, $k\in\{0,\dots,n\}$, where $\g{q}$ is a
    subalgebra of~$\g{u}(n-k)$ such that the corresponding
    subgroup~$Q$ of~$U(n-k)$ acts polarly with a totally real
    section on~$\C^{n-k}$.\label{th:main:so}

\item $\g{h} = \g{q} \oplus \g{b} \oplus \g{w} \oplus \g{g}_{2\alpha}\subset\g{su}(1,n)$,
    where $\g{b}$ is a linear subspace of~$\g{a}$, $\g{w}$ is a real
    subspace of~$\g{g}_{\alpha}$, and $\g{q}$ is a subalgebra of~$\g{k}_0$
    which normalizes~$\g{w}$ and such that the connected subgroup of $SU(1,n)$
    with Lie algebra $\g{q}$ acts polarly with a totally real section on
    the orthogonal complement of~$\g{w}$
    in~$\g{g}_{\alpha}$.\label{th:main:parabolic}
\end{enumerate}
Conversely, every nontrivial polar action on $\C H^n$ is orbit
equivalent to one of the actions above.
\end{theoremA}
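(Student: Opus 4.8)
\emph{Strategy.} The direct part of Theorem~A---that each $\g{h}$ in (i)--(ii) is polar---is a verification producing in each case an explicit totally geodesic, totally real section of the correct dimension and checking orthogonality to the orbits; I therefore concentrate on the converse. By standard reductions we may take $H$ to be a connected closed subgroup of $U(1,n)$ acting nontrivially and polarly on $\C H^{n}$, with section $\Sigma$ and cohomogeneity $c\geq 1$. Since a section of a polar action is totally geodesic and the totally geodesic submanifolds of $\C H^{n}$ are exactly the $\C H^{j}$ and $\R H^{j}$, $0\leq j\leq n$, the first step is the lemma that \emph{$\Sigma$ is totally real}, i.e.\ an open part of an $\R H^{c}$. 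I would prove this as a preliminary result by analysing the slice representation at a point of $\Sigma$ on a singular orbit and its interaction with the complex structure $J$, in the spirit of Podest\`{a}--Thorbergsson's treatment of $\C P^{n}$: a $J$-invariant direction tangent to $\Sigma$ would force nearby orbits to be complex submanifolds, incompatible with a totally geodesic section meeting them orthogonally. This lemma also isolates the K\"{a}hler-angle data of the relevant root spaces that enters later.

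\emph{The dichotomy.} The argument then splits according to whether $H$ fixes a point of the ideal boundary $\partial_\infty\C H^{n}$. If it does not, then $H$ is reductive (not being contained in a proper parabolic subgroup), and such an $H$ either acts transitively, fixes a point of $\C H^{n}$, or has a proper totally geodesic orbit, necessarily an $\R H^{k}$ or a $\C H^{k}$. One inspects the slice representation of an isotropy group on the complex normal space of such an orbit---polar with a totally real section by the lemma, hence described by Dadok's classification---while the tangential action on the orbit, being totally real, realises it as an $\R H^{k}$ after reducing away the $\C H^{k}$ possibility; arguing as in Wu's analysis of $\R H^{n}$ one is led, up to orbit equivalence, to $\g{h}=\g{q}\oplus\g{so}(1,k)$, that is, case (i), with $k=0$ the fixed-point subcase, while a transitive $H$ is orbit equivalent to an instance of (ii).

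\emph{The parabolic case.} The heart of the proof is the case where $H$ fixes a point $\xi\in\partial_\infty\C H^{n}$, so that after conjugation $\g{h}\subseteq\g{k}_{0}\oplus\g{a}\oplus\g{g}_{\alpha}\oplus\g{g}_{2\alpha}$. Writing $\g{w}=\g{h}\cap\g{g}_{\alpha}$ and $\g{q}=\g{h}\cap\g{k}_{0}$ and working in horospherical coordinates with the explicit second fundamental forms and curvatures of the orbits contained in the Iwasawa solvable part $AN$, I would establish in succession: $\g{g}_{2\alpha}\subseteq\g{h}$; the $\g{a}$- and $\g{g}_{\alpha}$-components of $\g{h}$ are genuine subspaces $\g{b}\subseteq\g{a}$ and $\g{w}\subseteq\g{g}_{\alpha}$ already contained in $\g{h}$; $\g{q}$ normalises $\g{w}$; and finally that $H$ is polar if and only if $\g{q}$ acts polarly with a totally real section on the orthogonal complement of $\g{w}$ in $\g{g}_{\alpha}$---the K\"{a}hler angle of $\g{w}$ being precisely the obstruction that the compact factor $\g{q}$ must compensate for. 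This produces $\g{h}=\g{q}\oplus\g{b}\oplus\g{w}\oplus\g{g}_{2\alpha}$, i.e.\ case (ii). I expect the main obstacle here: the rigidity assertion that polarity leaves no room for subalgebras of the parabolic mixing solvable and compact directions outside this product-over-$\g{g}_{\alpha}$ pattern requires delicate control of how the structure of the solvable part $AN$ constrains the orbits of a polar action.

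\emph{Conclusion.} It then remains to do the orbit-equivalence bookkeeping: to show that every polar $H$ is orbit equivalent to one whose Lie algebra is exactly as in (i) or (ii)---for instance by enlarging $\g{q}$ within the appropriate normalizer without altering orbits, or by replacing a reductive $H$ that has a complex totally geodesic orbit with its canonical representative inside a parabolic subgroup---and that, conversely, the two cases produce nothing beyond the stated list. Together with the direct part this completes the classification. The totally-real-section lemma and the reductive case are comparatively routine once the slice-representation machinery and Dadok's theorem are in hand; the parabolic rigidity statement is the step I expect to be hardest.
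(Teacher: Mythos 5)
Your outline follows essentially the same architecture as the paper (totally real sections, the dichotomy between subgroups preserving a totally geodesic submanifold and subgroups of a parabolic, induction via lower-dimensional $\C H^k$, and a rigidity analysis inside $K_0AN$), but it has concrete gaps at exactly the points where the real work lies. First, in the parabolic branch you propose to show ``$\g{g}_{2\alpha}\subseteq\g{h}$'' as the first step; this is false. The group with Lie algebra $\g{q}\oplus\g{a}$, $\g{q}\subset\g{k}_0$ (orbits: tubes around a geodesic), fixes two points of $\partial_\infty\C H^n$, so it lands in your parabolic branch, acts polarly for suitable $\g{q}$, and contains no part of $\g{g}_{2\alpha}$ --- nor is it orbit equivalent to any action of type (ii), since its minimal orbit is totally geodesic while every type (ii) action has a non--totally geodesic minimal orbit. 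The paper's parabolic theorem therefore has \emph{three} outcomes, $\g{q}\oplus\g{a}$, $\g{q}\oplus\g{a}\oplus\g{w}\oplus\g{g}_{2\alpha}$ and $\g{q}\oplus\g{w}\oplus\g{g}_{2\alpha}$, and the first is reabsorbed into case (i) with $k=1$ only at the very end. Your dichotomy (fixing a boundary point or not) must be refined so that groups preserving a totally geodesic subspace are peeled off first, or the parabolic analysis must carry this extra case.

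Second, setting $\g{w}=\g{h}\cap\g{g}_\alpha$ and $\g{q}=\g{h}\cap\g{k}_0$ assumes away the central difficulty: a subalgebra of $\g{k}_0\oplus\g{a}\oplus\g{n}$ typically contains only ``mixed'' elements $T+aB+U+xZ$ with no individual component in $\g{h}$. The paper works with the \emph{projection} $\g{h}_{\g{a}\oplus\g{n}}$, proves that if this projection equals $\g{b}\oplus\g{w}\oplus\g{r}$ then $H$ is orbit equivalent to the straightened group $\g{q}\oplus\g{b}\oplus\g{w}\oplus\g{r}$ (via a delicate comparison of slice representations using a self-adjoint operator $F_\xi(U)=[T_U,\xi]$ and an isotropy computation at $\Exp(\lambda\xi)(o)$ that forces $F_\xi=0$), and then eliminates the possible cross terms $\R(B+X)$, $\R(B+X+xZ)$, $\R(Y+Z)$ in the projection through a five-case analysis combining the totally-real-section constraint with explicit $\Ad(\Exp(\cdot))$ conjugations. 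You correctly flag this as the hardest step, but your proposal contains no mechanism for it, so as written it is a plan rather than a proof. Finally, the direct part is not a pure verification either: polarity of the examples in (ii) requires the Lie-triple/bracket criterion $\langle\g{h},[T_o\Sigma,T_o\Sigma]\rangle=0$, and polarity in case (i) requires the product-splitting theorem (Dadok's theorem applied to the slice representation, plus the mixed-bracket orthogonality), which is a genuine argument rather than a routine check.
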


We say that an action is trivial if it fixes all points. Transitive actions are not considered trivial in this paper. Taking for example $\g{q}=\g{k}_0$, $\g{b}=\g{a}$ and $\g{w}=\g{g}_\alpha$ in Theorem~A~\eqref{th:main:parabolic}, we obtain a transitive action on $\C H^n$.

In case~\eqref{th:main:so} of Theorem~A, one orbit of the $H$-action
is a totally geodesic $\R H^k$ and the other orbits are contained in
the distance tubes around it. In case~\eqref{th:main:parabolic}, if
$\g{b}=\g{a}$, one $H$-orbit of minimum orbit type contains a
geodesic line, while if $\g{b}=0$, any $H$-orbit of minimum orbit
type is contained in a horosphere.

We would like to remark here that Theorem~A actually provides many
examples of polar actions on~$\C H^n$. Indeed, for every choice of a
real subspace $\g{w}$ in~$\g{g}_\alpha$, there is at least one polar
action as described in part~(ii) of Theorem~A, see
Section~\ref{sec:examples}.

With the notation as in Theorem~A, we can determine the orbit equivalence
classes of the polar actions given in the theorem above.

\begin{theoremB}
Let $H_1$ and $H_2$ be two subgroups of $U(1,n)$ acting polarly on
$\C H^n$ as given by Theorem~A, and let $\g{h}_1$ and $\g{h}_2$ be
their corresponding Lie algebras. Then the actions of $H_1$ and $H_2$
are orbit equivalent if and only if one of the following conditions
holds:
\begin{enumerate}[{\rm (a)}]
\item $\g{h}_i=\g{q}_i\oplus\g{so}(1,k)$, $i\in\{1,2\}$, and the
    actions of $Q_1$ and $Q_2$ on $\C^{n-k}$ are orbit
    equivalent.

\item
    $\g{h}_i=\g{q}_i\oplus\g{b}_i\oplus\g{w}_i\oplus\g{g}_{2\alpha}$,
    $i\in\{1,2\}$, $\g{b}_1=\g{b}_2$, there exists an element
    $k\in K_0$ such that $\g{w}_2=\Ad(k)\g{w}_1$, and the actions
    of $Q_i$ on the orthogonal complement of $\g{w}_i$ in
    $\g{g}_\alpha$ are orbit equivalent for
    $i\in\{1,2\}$.\label{th:congruence:parabolic}
\end{enumerate}
\end{theoremB}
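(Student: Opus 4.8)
The plan is to prove the two implications separately; the converse direction is the easy constructive one, and the direct implication is where the work lies. I will freely use that an orbit equivalence between polar actions is by definition induced by an isometry of $\C H^n$, and that orbit equivalences of polar representations may be realized by linear (unitary) maps.

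For the converse, in case~(a) I would realize the given orbit equivalence between $Q_1$ and $Q_2$ on $\C^{n-k}$ by a unitary map $\psi\in U(n-k)$, and use that $U(n-k)$ embeds in the isometry group of $\C H^n$ as the subgroup fixing a totally geodesic $\R H^k$ pointwise and acting standardly on the $J$-invariant part $\C^{n-k}$ of its normal space. Since this $\psi$ commutes with $SO(1,k)$ and leaves the radial and $J(\R^k)$ structure of the distance tubes around $\R H^k$ untouched, conjugation by $\psi$ carries the orbit foliation of $H_1$ to that of $H_2$. In case~(b) I would first conjugate $H_1$ by the element $k\in K_0$ from the hypothesis, which replaces $\g{w}_1$ by $\g{w}_2$, fixes $\g{b}_1=\g{b}_2$ and $\g{g}_{2\alpha}$, and turns $\g{q}_1$ into $\Ad(k)\g{q}_1$; then I would choose an element of $K_0$ inducing the prescribed linear orbit equivalence between the $Q_i$-actions on the orthogonal complement of $\g{w}_2$ in $\g{g}_\alpha$ while acting trivially on $\g{w}_2$ and on $\g{g}_{2\alpha}$, which is possible because $K_0$ acts on $\g{g}_\alpha$ as the unitary group $U(n-1)$ and centralizes $\g{g}_{2\alpha}$. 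Conjugating by this element completes the orbit equivalence with $H_2$.

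For the direct implication, suppose there is an isometry $g$ of $\C H^n$ with $gH_1g^{-1}$ and $H_2$ inducing the same orbit foliation $\mathcal F$. The first step is to show that $\g{h}_1$ and $\g{h}_2$ lie in the same case of Theorem~A by isolating isometry-invariant features of $\mathcal F$: the existence of a $0$-dimensional leaf detects case~(i) with $k=0$, and the asymptotic behaviour of the leaves — whether all leaves stay within bounded distance of a totally geodesic $\R H^k$ acted on reductively (case~(i)), or accumulate at a single common point at infinity with the minimal-orbit-type leaves lying in horospheres or carrying geodesic rays ending there (case~(ii)) — separates the two cases and, in case~(i), pins down $k$, the leaf $\R H^k$ being characterized intrinsically as the unique minimal (equivalently, totally geodesic) leaf around which the remaining leaves are tubes. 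In case~(i), $g$ must carry the $\R H^k$-leaf of $H_1$ onto that of $H_2$, so $g$ lies in the normalizer of a fixed $\R H^k$ in $\mathrm{Isom}(\C H^n)$, which up to components is $\mathrm{Isom}(\R H^k)\times U(n-k)$; the first factor does not affect a normal $\C^{n-k}$-slice, and restricting $\mathcal F$ to such a slice identifies the $U(n-k)$-component of $g$ with an orbit equivalence between $Q_1$ and $Q_2$ on $\C^{n-k}$, which is~(a).

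In case~(ii), both $H_i$ fix exactly one point $x_i$ at infinity, so $g$ sends $x_1$ to $x_2$ and therefore lies in the minimal parabolic subgroup $K_0AN$ of the isometry group; writing $g=kan$ and comparing $\Ad(g)\g{h}_1$ with $\g{h}_2=\g{q}_2\oplus\g{b}_2\oplus\g{w}_2\oplus\g{g}_{2\alpha}$ in the root-space decomposition, one uses that $\Ad(a)$ and $\Ad(n)$ act trivially on $\g{a}$, act on $\g{g}_\alpha$ by scalings modulo $\g{g}_{2\alpha}$, and preserve $\g{g}_{2\alpha}$, while the $K_0$-component acts on $\g{g}_\alpha$ unitarily and centralizes $\g{g}_{2\alpha}$; this forces $\g{b}_1=\g{b}_2$ and $\g{w}_2=\Ad(k)\g{w}_1$, and restricting $\mathcal F$ to the orthogonal complement of $\g{w}_2$ in $\g{g}_\alpha$ yields the remaining orbit equivalence of the $Q_i$-actions, which is~(b). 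The hard part will be precisely this last analysis together with the ``same case'' step: since orbit-equivalent groups need not be conjugate, one cannot simply identify $\Ad(g)\g{h}_1$ with $\g{h}_2$, and must instead recover the invariants $\g{b}$, the $\Ad(K_0)$-orbit of $\g{w}$, and the orbit-equivalence class of $Q$ on the orthogonal complement of $\g{w}$, directly from $\mathcal F$; verifying that a general parabolic isometry can alter these pieces only as permitted by conditions~(a) and~(b) — in particular that the $A$- and $N$-parts contribute nothing further — is the technical heart of the argument.
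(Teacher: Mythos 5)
Your outline correctly identifies where the difficulty lies, but it defers exactly that difficulty without resolving it, so as it stands there is a genuine gap. In the forward direction you write that one ``compares $\Ad(g)\g{h}_1$ with $\g{h}_2$'' and then concede that this comparison is illegitimate because orbit-equivalent groups need not be conjugate, promising instead to recover $\g{b}$, the $\Ad(K_0)$-class of $\g{w}$, and the class of $Q$ ``directly from $\mathcal F$'' --- but no mechanism for doing so is supplied. The paper's mechanism is concrete and quite different from your boundary-at-infinity heuristics: the image of the minimum-orbit-type orbit $H_1\cdot o$ must be a minimal-dimension orbit of $H_2$, hence coincides with $\tilde H_2\cdot g(o)$ for $\tilde{\g{h}}_2=\g{b}_2\oplus\g{w}_2\oplus\g{g}_{2\alpha}$ and some $g\in AN$; pulling back by $g$ gives a group with Lie algebra $\R(aB+X)\oplus\g{w}_2\oplus\g{g}_{2\alpha}$, and an explicit computation of the mean curvature vector of such orbits (Lemma~\ref{lemma:meanCurvature}, Corollary~\ref{corol:meanCurvature}) is what rules out equivalences between cases (i) and (ii), forces $\g{b}_1=\g{b}_2$, and kills the spurious $A$- and $N$-contributions ($a=0$, $X=0$). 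The congruence $\g{w}_2=\Ad(k)\g{w}_1$ then comes from the fact that the differential of a holomorphic isometry carries subspaces of constant K\"ahler angle $\varphi$ to subspaces of constant K\"ahler angle $\varphi$, so the K\"ahler angle decompositions of $T_o(H_i\cdot o)$ must match, and Remark~\ref{rem:KahlerAngles} produces $k$. Your proposed invariants (``all leaves within bounded distance of a totally geodesic $\R H^k$'' versus ``leaves accumulating at a single point at infinity,'' and the claim that $g$ must match the fixed points at infinity of $H_1$ and $H_2$) are plausible but unproven, and proving them would be no easier than the mean-curvature route; in particular, having the same orbits does not obviously force two groups to have the same fixed-point set on $\partial\C H^n$.

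There is also a gap in your converse direction: an orbit equivalence between the $Q_1$- and $Q_2$-representations need not be induced by an element of $U(n-k)$, nor by any single element of $K_0$ conjugating $Q_1$ to $Q_2$ (orbit-equivalent polar representations need not be conjugate --- consider $SU(2)$ and $U(2)$ on $\C^2$), so ``choose an element of $K_0$ inducing the prescribed linear orbit equivalence \ldots and conjugate'' is not available. What is needed instead is an argument comparing the orbits of two non-conjugate groups directly --- after reducing by $\Ad(k)$ to $\g{w}_1=\g{w}_2$, one shows that both $H_i$ have the same orbit through $o$, that their slice representations there are orbit equivalent, and that codimensions of orbits are controlled by the slice representation --- which is precisely the style of argument in Proposition~\ref{prop:orbit equivalent}.
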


By means of the concept of
K\"{a}hler angle, we can give an equivalent way of characterizing the congruence of
subspaces of $\g{g}_\alpha$ by an element of $K_0$ stated in
Theorem~B(\ref{th:congruence:parabolic}). A subspace $\g{w}$ of $\g{g}_\alpha\cong\C^{n-1}$ is
said to have \emph{constant K\"{a}hler angle} $\varphi\in[0,\pi/2]$ if for each
nonzero vector $v\in\g{w}$ the angle between $Jv$ and $\g{w}$ is
precisely $\varphi$. In Subsection~\ref{subsec:realsubspace} we show
that any real subspace $\g{w}$ of $\g{g}_\alpha$ admits a
decomposition $\g{w}=\oplus_{\varphi\in\Phi}\g{w}_\varphi$ into
subspaces of constant K\"{a}hler angle, where $\Phi$ is the set of the
different K\"{a}hler angles arising in this decomposition, and
$\g{w}_\varphi$ has constant K\"{a}hler angle $\varphi$. This
decomposition is unique up to the ordering of the addends. Two
subspaces $\g{w}_1=\oplus_{\varphi\in\Phi_1}\g{w}_{1,\varphi}$ and
$\g{w}_2=\oplus_{\varphi\in\Phi_2}\g{w}_{2,\varphi}$ of
$\g{g}_\alpha$ are then congruent by an element of $K_0\cong U(n-1)$
if and only if $\Phi_1=\Phi_2$ and
$\dim\g{w}_{1,\varphi}=\dim\g{w}_{2,\varphi}$ for each~$\varphi$.

It follows in particular from Theorems~A and~B that the moduli space
of polar actions on $\C H^n$ up to orbit equivalence is finite if
$n=2$, cf.~\cite{BD11a}, and uncountable infinite in case $n \ge 3$.
Indeed, in dimension $n \ge 3$ the action of the group $U(n-1)$ on
the set of real subspaces of dimension $k$ of $\C^{n-1}$, with
$k\in\{2,\dots, 2n-4\}$, is not transitive. The orbits of this action
are determined by the decomposition of a real subspace into a sum of
spaces of constant K\"{a}hler angle, and the latter are parametrized by
the set $[0,\pi/2]$, which is uncountable infinite. As a
consequence, there are uncountably many polar, non-hyperpolar actions
on $\C H^n$, $n\geq 3$, up to orbit equivalence.

\medskip

This paper is organized as follows. In
Section~\ref{sec:preliminaries} we review the basic facts and
notations on complex hyperbolic spaces (\S\ref{subsec:CHn}), polar
actions (\S\ref{subsec:polar}), and real vector subspaces of complex
vector spaces (\S\ref{subsec:realsubspace}). The results of
Subsection~\ref{subsec:realsubspace} will be crucial for the rest of
the paper. Section~\ref{sec:examples} is devoted to present the new
examples that appear in Theorem~A. We also present here an outline of
the proof of Theorem~A. This proof has two main parts depending on
whether the group acting leaves a totally geodesic subspace
invariant (Section~\ref{sec:totally_geodesic}) or is contained in a
maximal parabolic subgroup of $SU(1,n)$
(Section~\ref{sec:parabolic}). We conclude in Section~\ref{sec:proof}
with the proofs of Theorems~A and~B.

%


\section{Preliminaries}\label{sec:preliminaries}

In this section we introduce the main known results and notation used
throughout this paper. We would like to emphasize the importance of
Subsection~\ref{subsec:realsubspace}, which is pivotal in the
construction and classification of new examples of polar actions on
complex hyperbolic spaces.

As a matter of notation, if $U_1$ and $U_2$ are two linear subspaces
of a vector space $V$, then $U_1\oplus U_2$ denotes their (not
necessarily orthogonal) direct sum. We will frequently use the
following notation for the orthogonal complement of a subspace of a
real vector space endowed with a scalar product, namely, by $V
\ominus U$ we denote the orthogonal complement of the linear
subspace~$U$ in the Euclidean vector space~$V$.

\subsection{The complex hyperbolic space}\label{subsec:CHn}
In this subsection we recall some well-known facts and notation on
the structure of the complex hyperbolic space as a symmetric space.
This will be fundamental for the rest of the work. As usual, Lie
algebras are written in gothic letters.

We will denote by $\C H^n$ the complex hyperbolic space with constant
holomorphic sectional curvature $-1$. As a symmetric space, $\C H^n$
is the coset space $G/K$, where $G=SU(1,n)$, and $K=S(U(1)U(n))$ is
the isotropy group at some point $o\in \C H^n$. Let
$\g{g}=\g{k}\oplus\g{p}$ be the Cartan decomposition of $\g{g}$ with
respect to $o$, where $\g{p}$ is the orthogonal complement of $\g{k}$
in $\g{g}$ with respect to the Killing form $B$ of $\g{g}$. Denote by
$\theta$ the corresponding Cartan involution, which satisfies
$\theta\rvert_\g{k}=\id$ and $\theta\rvert_\g{p}=-\id$. Note that the
orthogonal projections onto $\g{k}$ and $\g{p}$ are
$\frac{1}{2}(1+\theta)$ and $\frac{1}{2}(1-\theta)$, respectively.
Let $\ad$ and $\Ad$ be the adjoint maps of $\g{g}$ and $G$,
respectively. It turns out that $\langle X, Y\rangle=-B(\theta X, Y)$
defines a positive definite inner product on $\g{g}$ satisfying the
relation $\langle\ad(X)Y,Z\rangle=-\langle Y,\ad(\theta X)Y\rangle$
for all $X$, $Y$, $Z\in\g{g}$. Moreover, we can identify $\g{p}$ with
the tangent space $T_o\C H^n$ of $\C H^n$ at the point $o$.

Since $\C H^n$ has rank one, any maximal abelian subspace $\g{a}$  of
$\g{p}$ is $1$-dimensional. For each linear functional $\lambda$ on
$\g{a}$, define $\g{g}_\lambda=\{X\in\g{g}:\ad(H)X=\lambda(H)X\text{
for all } H\in\g{a}\}$.
Then $\g{a}$ induces the restricted root
space decomposition
$\g{g}=\g{g}_{-2\alpha}\oplus\g{g}_{-\alpha}\oplus\g{g}_0
\oplus\g{g}_\alpha\oplus\g{g}_{2\alpha}$,
which is an orthogonal direct sum with respect to
$\langle\cdot,\cdot\rangle$ satisfying
$[\g{g}_\lambda,\g{g}_\mu]=\g{g}_{\lambda+\mu}$ and $\theta
\g{g}_\lambda=\g{g}_{-\lambda}$. Moreover,
$\g{g}_0=\g{k}_0\oplus\g{a}$, where $\g{k}_0=\g{g}_0\cap
\g{k}\cong\g{u}(n-1)$ is the normalizer of $\g{a}$ in $\g{k}$. The
root space $\g{g}_\alpha$ has dimension $2n-2$, while
$\g{g}_{2\alpha}$ is $1$-dimensional, and both are normalized by
$\g{k}_0$.

We define $\g{n}=\g{g}_\alpha\oplus\g{g}_{2\alpha}$, which is a
nilpotent subalgebra of $\g{g}$ isomorphic to the
$(2n-1)$-dimensional Heisenberg algebra. The corresponding Iwasawa
decomposition of $\g{g}$ is $\g{g}=\g{k}\oplus\g{a}\oplus\g{n}$. The
connected subgroup of $G$ with Lie algebra $\g{a}\oplus\g{n}$ acts
simply transitively on $\C H^n$. One may endow $AN$, and then
$\g{a}\oplus\g{n}$, with the left-invariant metric
$\langle\cdot,\cdot\rangle_{AN}$ and the complex structure $J$ that
make $\C H^n$ and $AN$ isometric and isomorphic as K\"ahler
manifolds. Then $\langle X, Y\rangle_{AN}=\langle X_{\g{a}},
Y_\g{a}\rangle+\frac{1}{2}\langle X_\g{n}, Y_\g{n}\rangle$ for $X$,
$Y\in\g{a}\oplus\g{n}$; here subscripts mean the $\g{a}$ and $\g{n}$ components respectively.
The complex structure $J$ on $\g{a}\oplus\g{n}$ leaves $\g{g}_\alpha$
invariant, turning $\g{g}_\alpha$ into an $(n-1)$-dimensional complex
vector space $\C^{n-1}$. Moreover, $J\g{a}=\g{g}_{2\alpha}$.

Let $B\in\g{a}$ be a unit vector and define $Z=JB\in\g{g}_{2\alpha}$.
Then $\langle B, B\rangle=\langle B, B\rangle_{AN}=1$ and $\langle Z,
Z\rangle =2\langle Z, Z\rangle_{AN}=2$. The Lie bracket of
$\g{a}\oplus\g{n}$ is given by
\[
[aB+U+xZ,bB+V+yZ]=-\frac{b}{2}U+\frac{a}{2}V+\left(-bx+ay+
\frac{1}{2}\langle JU,V\rangle\right)Z,
\]
where $a$, $b$, $x$, $y\in\R$, and $U$, $V\in\g{g}_\alpha$. Let us
also  define $\g{p}_\lambda=(1-\theta)\g{g}_\lambda$, the projection
onto $\g{p}$ of the restricted root spaces. Then
$\g{p}=\g{a}\oplus\g{p}_\alpha\oplus\g{p}_{2\alpha}$. If the complex
structure on $\g{p}$ is denoted by $i$, then we have that
$2iB=(1-\theta)Z$, and $i(1-\theta)U=(1-\theta)JU$ for every
$U\in\g{g}_\alpha$.

We state now two lemmas that will be used frequently throughout the article.

\begin{lemma}\label{lemma:aux}
We have:
\begin{enumerate}[\rm (a)]
\item $[\theta X,Z]=-JX$ for each
    $X\in\g{g}_\alpha$.\label{lemma:aux:a}

\item $\langle T,(1+\theta)[\theta X,Y]\rangle=2\langle
    [T,X],Y\rangle$, for any $X$, $Y\in\g{g}_\alpha$ and
    $T\in\g{k}_0$.\label{lemma:aux:b}
\end{enumerate}
\end{lemma}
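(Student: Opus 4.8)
The plan is to derive both identities by short direct computations from the structural facts recalled above: the $\theta$-invariance $\theta\g{g}_\lambda=\g{g}_{-\lambda}$ and the grading $[\g{g}_\lambda,\g{g}_\mu]\subseteq\g{g}_{\lambda+\mu}$ of the restricted root space decomposition; the relation $\langle\ad(X)Y,W\rangle=-\langle Y,\ad(\theta X)W\rangle$ (skew-symmetry of the adjoint action with respect to $\langle\cdot,\cdot\rangle$ and $\theta$); the self-adjointness of $\theta$ for $\langle\cdot,\cdot\rangle$, which follows from $\theta^2=\id$ and $B(\theta\,\cdot\,,\theta\,\cdot\,)=B(\cdot\,,\cdot\,)$; and the explicit Lie bracket of $\g{a}\oplus\g{n}$ together with the normalizations $\langle B,B\rangle=1$ and $\langle Z,Z\rangle=2$.

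For part~(a), I would first observe that $\theta X\in\g{g}_{-\alpha}$ and $Z\in\g{g}_{2\alpha}$ force $[\theta X,Z]\in\g{g}_\alpha$, and that $JX\in\g{g}_\alpha$ as well; since $\langle\cdot,\cdot\rangle$ is positive definite on $\g{g}_\alpha$, it then suffices to check that $\langle[\theta X,Z],W\rangle=-\langle JX,W\rangle$ for every $W\in\g{g}_\alpha$. Using the skew-symmetry relation and $\theta^2=\id$ gives $\langle[\theta X,Z],W\rangle=\langle\ad(\theta X)Z,W\rangle=-\langle Z,\ad(X)W\rangle=-\langle Z,[X,W]\rangle$. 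Since $[X,W]\in\g{g}_{2\alpha}$, the explicit bracket formula yields $[X,W]=\tfrac12\langle JX,W\rangle Z$, and $\langle Z,Z\rangle=2$ then gives $\langle[\theta X,Z],W\rangle=-\langle JX,W\rangle$, as required.

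For part~(b), I would use that $\theta$, and hence $1+\theta$, is self-adjoint, together with $\theta T=T$ for $T\in\g{k}_0\subseteq\g{k}$, so that $\langle T,(1+\theta)[\theta X,Y]\rangle=\langle(1+\theta)T,[\theta X,Y]\rangle=2\langle T,[\theta X,Y]\rangle$. Applying the skew-symmetry relation once more, $\langle T,[\theta X,Y]\rangle=\langle\ad(\theta X)Y,T\rangle=-\langle Y,\ad(X)T\rangle=\langle Y,[T,X]\rangle=\langle[T,X],Y\rangle$, which gives the asserted identity.

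Both parts are thus routine, and I do not foresee a genuine obstacle; the only point needing care will be the bookkeeping of normalization constants — in particular reconciling the factor $\tfrac12$ in the Heisenberg bracket with $\langle Z,Z\rangle=2$, and making sure the inner product appearing in the bracket formula of $\g{a}\oplus\g{n}$ is the Killing-form inner product $\langle\cdot,\cdot\rangle$ and not $\langle\cdot,\cdot\rangle_{AN}$. As an alternative for part~(a), one could instead argue through the $\g{p}$-components, using the relations $(1-\theta)Z=2iB$ and $i(1-\theta)U=(1-\theta)JU$, but the pairing argument above is the most economical.
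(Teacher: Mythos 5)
Your proof is correct. The paper itself gives no argument for this lemma but simply cites \cite[Lemma~2.1]{BD11b}; your direct verification --- reducing (a) to a pairing against $\g{g}_\alpha$ via the relation $\langle\ad(X)Y,W\rangle=-\langle Y,\ad(\theta X)W\rangle$ together with the Heisenberg bracket $[X,W]=\tfrac12\langle JX,W\rangle Z$ and $\langle Z,Z\rangle=2$, and (b) to the self-adjointness of $\theta$ plus the same relation --- is exactly the standard computation, and the normalization constants all come out right.
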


\begin{proof}
See \cite[Lemma~2.1]{BD11b}.
\end{proof}

\begin{lemma}\label{lemma:equivariance}
The orthogonal projection map
$\frac{1}{2}(1-\theta)\colon\g{a}\oplus\g{g}_\alpha\oplus\g{g}_{2\alpha}
\to\g{a}\oplus\g{p}_\alpha\oplus\g{p}_{2\alpha}$ defines an equivalence between
the adjoint $K_0$-representation on
$\g{a}\oplus\g{g}_\alpha\oplus\g{g}_{2\alpha}$ and the adjoint
$K_0$-representation on $\g{p}=\g{a}\oplus\g{p}_\alpha\oplus\g{p}_{2\alpha}$.
Moreover, this equivalence is an isometry between
$(\g{a}\oplus\g{g}_\alpha\oplus\g{g}_{2\alpha},\langle\cdot,\cdot\rangle_{AN})$
and $(\g{p},\langle\cdot,\cdot\rangle)$, and
$\frac{1}{2}(1-\theta)\colon\g{g}_\alpha\to\g{p}_\alpha$ is a complex linear
map.
\end{lemma}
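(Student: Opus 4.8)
The plan is to treat the three assertions separately, in each case reducing to structural facts already recorded in \S\ref{subsec:CHn}. Throughout I would write $T:=\tfrac12(1-\theta)$, which is the orthogonal projection of $\g{g}$ onto $\g{p}$. The first step is to check that $T$ restricts to a linear isomorphism of $\g{a}\oplus\g{g}_\alpha\oplus\g{g}_{2\alpha}$ onto $\g{p}$ that respects the block decompositions: since $\theta=-\id$ on $\g{a}$ we have $T|_\g{a}=\id$, and since $\theta$ interchanges $\g{g}_\lambda$ with $\g{g}_{-\lambda}$ and $\g{g}_\lambda\cap\g{g}_{-\lambda}=0$ for $\lambda\in\{\alpha,2\alpha\}$, the map $T$ sends $\g{g}_\lambda$ injectively onto $\g{p}_\lambda=(1-\theta)\g{g}_\lambda$; as $\g{p}=\g{a}\oplus\g{p}_\alpha\oplus\g{p}_{2\alpha}$, this gives the isomorphism.

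For the $K_0$-equivariance I would use that the Cartan involution of $G$ fixes $K$ pointwise, so that $\Ad(k)\theta=\theta\Ad(k)$, and hence $\Ad(k)T=T\Ad(k)$, for every $k\in K_0$. The domain $\g{a}\oplus\g{g}_\alpha\oplus\g{g}_{2\alpha}$ is $\Ad(K_0)$-invariant because $\g{k}_0$ normalizes $\g{a}$, $\g{g}_\alpha$ and $\g{g}_{2\alpha}$, and $\g{p}$ is $\Ad(K_0)$-invariant because $K_0\subset K$; together with the first step, this shows $T$ is an equivalence of the two adjoint $K_0$-representations.

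For the isometry statement I would first note that both $\langle\cdot,\cdot\rangle|_\g{p}$ and $\langle\cdot,\cdot\rangle_{AN}$ are block-diagonal for the decompositions above — root spaces for distinct roots are $\langle\cdot,\cdot\rangle$-orthogonal, and $\langle\cdot,\cdot\rangle_{AN}$ differs from $\langle\cdot,\cdot\rangle$ only by the scalar $\tfrac12$ on $\g{n}$ — so it suffices to compare on each block. On $\g{a}$ the two inner products agree and $T=\id$. For $X,Y\in\g{g}_\lambda$ with $\lambda\in\{\alpha,2\alpha\}$, the cross terms in $\langle(1-\theta)X,(1-\theta)Y\rangle$ vanish because $\theta X\in\g{g}_{-\lambda}$ is $\langle\cdot,\cdot\rangle$-orthogonal to $\g{g}_\lambda$, and $\langle\theta X,\theta Y\rangle=\langle X,Y\rangle$ since $\theta$ preserves $\langle\cdot,\cdot\rangle$; hence $\langle TX,TY\rangle=\tfrac14\cdot 2\langle X,Y\rangle=\tfrac12\langle X,Y\rangle=\langle X,Y\rangle_{AN}$. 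Finally, the complex linearity of $T\colon\g{g}_\alpha\to\g{p}_\alpha$ is immediate from the identity $i(1-\theta)U=(1-\theta)JU$ recorded in \S\ref{subsec:CHn}, namely $i\,TU=\tfrac12\,i(1-\theta)U=\tfrac12(1-\theta)JU=T(JU)$.

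Essentially everything here is bookkeeping with the root-space structure, so I do not anticipate a real obstacle; the only genuine computation is the one-line identity $\langle(1-\theta)X,(1-\theta)Y\rangle=2\langle X,Y\rangle$ on $\g{g}_\lambda$, and the single point that requires care is the factor-of-two discrepancy between $\langle\cdot,\cdot\rangle$ and $\langle\cdot,\cdot\rangle_{AN}$ on $\g{n}$, which is exactly compensated by the $\tfrac12$ in $T=\tfrac12(1-\theta)$ and is the reason $T$ is an isometry rather than merely a homothety.
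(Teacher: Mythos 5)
Your proposal is correct and follows essentially the same route as the paper, whose own proof simply invokes the $K$-equivariance of $\theta$ and "the facts stated above in this subsection"; you have merely written out those details explicitly (the block decomposition, the one-line computation $\langle(1-\theta)X,(1-\theta)Y\rangle=2\langle X,Y\rangle$ on $\g{g}_\lambda$, and the identity $i(1-\theta)U=(1-\theta)JU$). The bookkeeping, including the factor-of-two compensation between $\tfrac12(1-\theta)$ and $\langle\cdot,\cdot\rangle_{AN}$, is accurate.
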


\begin{proof}
The first part follows from the fact that $\theta$ is a
$K$-equivariant,  hence $K_0$-equivariant, map on~$\g{g}$. The other
claims follow from the facts stated above in this subsection.
\end{proof}

\subsection{Polar actions}\label{subsec:polar}

Let $M$ be a Riemannian manifold and $I(M)$ its isometry group. It is
known that $I(M)$ is a Lie group. Let $H$ be a connected closed
subgroup of $I(M)$. The action of $H$ on $M$ is called \emph{polar}
if there exists an immersed connected submanifold $\Sigma$ of $M$ such that:
\begin{enumerate}
\item $\Sigma$ intersects all the orbits of the $H$-action, and

\item for each $p\in\Sigma$, the tangent space of $\Sigma$ at
    $p$, $T_p\Sigma$, and the tangent space of the orbit through
    $p$ at $p$, $T_p(H\cdot p)$, are orthogonal.
\end{enumerate}
In such a case, the submanifold $\Sigma$ is called a \emph{section}
of the $H$-action. The action of $H$ is called \emph{hyperpolar} if
the section $\Sigma$ is flat in its induced Riemannian metric.

Two isometric Lie group actions on two Riemannian manifolds~$M$
and~$N$ are said to be \emph{orbit equivalent} if there is an
isometry $M \to N$ which maps connected components of orbits onto
connected components of orbits. They are said to be \emph{conjugate}
if there exists an equivariant isometry $M \to N$.

The final aim of our research is to classify polar actions on a given
Riemannian manifold up to orbit equivalence. In this paper we
accomplish this task for complex hyperbolic spaces. See the survey
articles~\cite{T05}, \cite{T10} and~\cite{D13} for more information and
references on polar actions.

Since $\C H^n$ is of rank one, a polar action on~$\C H^n$ is
hyperpolar if and only if it is of cohomogeneity one, i.e.\ the
orbits of maximal dimension are hypersurfaces. Conversely, any action
of cohomogeneity one on~$\C H^n$ (or any other Riemannian symmetric
space) is hyperpolar. Cohomogeneity one actions on complex hyperbolic
spaces have been classified by Berndt and Tamaru in~\cite{BT07}.

{}From now on we focus on polar actions on complex hyperbolic spaces
and recall or prove some facts that will be used later in this
article. We begin with a criterion that allows us to decide whether
an action is polar or not. The first such criterion of polarity is
credited to Gorodski~\cite{G04}.

\begin{proposition}\label{prop:criterion2}\label{prop:criterion1}
Let $M = G/K$ be a Riemannian symmetric space of noncompact type, and let
$\Sigma$ be a connected totally geodesic submanifold of $M$ with $o \in
\Sigma$. Let $H$ be a closed subgroup of~$I(M)$. Then $H$ acts polarly on $M$
with section $\Sigma$ if and only if $T_o\Sigma$ is a section of the slice
representation of $H_o$ on $\nu_o(H \cdot o)$, and $\langle
\g{h},T_o\Sigma\oplus[T_o\Sigma,T_o\Sigma]\rangle=0$.

In this case, the following conditions are satisfied:
\begin{itemize}
\item[(a)] $T_o\Sigma \oplus[\g{h}_o,\xi]=\nu_o(H\cdot o)$ for each regular
    normal vector $\xi\in\nu_o(H\cdot o)$.
\item[(b)] $T_o\Sigma \oplus[\g{h}_o,T_o\Sigma]=\nu_o(H\cdot o)$.
\item[(c)] $\Ad(H_o)T_o\Sigma=\nu_o(H\cdot o)$.
\end{itemize}
\end{proposition}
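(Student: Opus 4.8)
The plan is to reduce the statement to the known polarity criterion for symmetric spaces of noncompact type in terms of the isotropy action, and then extract the consequences (a)--(c) from the polarity condition using the general theory of sections. I would organize the argument as follows.

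First I would recall the criterion of Gorodski \cite{G04} (see also the formulation in \cite{DK11,k11}): for a proper isometric action of $H$ on a symmetric space $M=G/K$ of noncompact type, the action is polar with a totally geodesic section $\Sigma$ through $o$ if and only if (1) $T_o\Sigma$ is orthogonal to the orbit $T_o(H\cdot o)=\g{h}\cdot o$, i.e.\ $T_o\Sigma\subset\nu_o(H\cdot o)$; (2) $T_o\Sigma$ is a section for the slice representation of the isotropy group $H_o$ on the normal space $\nu_o(H\cdot o)$; and (3) an integrability-type condition ensuring that the Lie triple system generated by $T_o\Sigma$ stays orthogonal to $\g{h}\cdot o$, which is exactly $\langle\g{h},T_o\Sigma\oplus[T_o\Sigma,T_o\Sigma]\rangle=0$. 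The nontrivial point here, compared with the compact case, is that in noncompact type a totally geodesic submanifold through $o$ is determined by a Lie triple system $T_o\Sigma\subset\g{p}$, and a section must be totally geodesic because $M$ has nonpositive curvature, so sections are automatically of this form; this is why the criterion can be stated purely infinitesimally. I would state this as the main input, citing the references, and then verify that our hypotheses on $\Sigma$ (connected, totally geodesic, $o\in\Sigma$) put us exactly in the setting where the criterion applies, with $\g{h}_o$ the isotropy subalgebra $\g{h}\cap\g{k}$.

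Next, assuming the action is polar with section $\Sigma$, I would derive (a)--(c). For (c): by definition of a section, $\Sigma$ meets every orbit; in particular every orbit $H\cdot p$ with $p$ near $o$ meets $\Sigma$, and conversely every point of the normal space is, via the exponential map (which is a diffeomorphism onto $M$ in noncompact type and carries $T_o\Sigma$ onto $\Sigma$), reached. Since the $H_o$-orbits in $\nu_o(H\cdot o)$ meet $T_o\Sigma$ (because $T_o\Sigma$ is a section for the slice representation by the criterion) and the full normal space is swept out, one gets $\Ad(H_o)T_o\Sigma=\nu_o(H\cdot o)$; this is the standard fact that a section of a polar representation meets every orbit. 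Statement (b) is the linearization of (c): differentiating the orbit map of $H_o$ through a point of $T_o\Sigma$ and using $\Ad(H_o)T_o\Sigma=\nu_o(H\cdot o)$ gives $T_o\Sigma+[\g{h}_o,T_o\Sigma]=\nu_o(H\cdot o)$; here one uses that the sum is in fact direct on the complement but only needs the spanning statement. Statement (a) is the refinement of (b) to a single regular normal vector $\xi$: a normal vector $\xi\in\nu_o(H\cdot o)$ is regular precisely when the slice representation orbit $H_o\cdot\xi$ has maximal dimension, equivalently $T_o\Sigma\oplus[\g{h}_o,\xi]$ already exhausts $\nu_o(H\cdot o)$; this is the infinitesimal version of the fact that the union of the $H_o$-images of a neighborhood of a regular point of $T_o\Sigma$ covers a neighborhood in $\nu_o(H\cdot o)$. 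I would phrase each of these as a short lemma-style paragraph, being careful that ``regular'' is taken with respect to the slice representation $H_o\curvearrowright\nu_o(H\cdot o)$, and note that (a) is consistent with (b) since $[\g{h}_o,\xi]\subset[\g{h}_o,T_o\Sigma]$ need not hold pointwise but does hold after summing.

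The main obstacle I anticipate is the ``only if'' direction in the noncompact setting: one must argue that a section of a polar action on $\C H^n$, which a priori is just an immersed submanifold meeting orbits orthogonally, is automatically (a piece of, hence after taking the complete totally geodesic extension) totally geodesic and passes through a point that can be taken to be $o$. The totally-geodesic part follows from the general result of Palais--Terng type arguments adapted to nonpositive curvature: the normal distribution to the principal orbits is autoparallel, so its integral manifolds (the sections) are totally geodesic; alternatively one invokes that in a Hadamard manifold a section is flat-or-not but always totally geodesic. Having fixed this, the rest is a matter of translating the geometric section conditions into the algebraic identities of the proposition via $\exp_o$ and the slice representation, which is routine. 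I would therefore spend most of the write-up stating the criterion precisely with references and carefully checking conditions (1)--(3) match $T_o\Sigma$ being a section of the slice representation together with $\langle\g{h},T_o\Sigma\oplus[T_o\Sigma,T_o\Sigma]\rangle=0$, and keep the derivation of (a)--(c) brief.
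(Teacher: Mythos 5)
Your proposal follows essentially the same route as the paper, whose entire proof is a one-line citation of the polarity criterion in \cite[Corollary~3.2]{BD11a} together with well-known facts on polar representations of compact groups from \cite{Da85}: your conditions (1)--(3) are exactly that criterion, and your derivations of (a)--(c) are the standard Dadok facts about sections of polar representations. The only point to watch in (a) is that $T_o\Sigma\oplus[\g{h}_o,\xi]=\nu_o(H\cdot o)$ expresses that $[\g{h}_o,\xi]$ is the orthogonal complement of the section through $\xi$, so it is valid for regular $\xi$ lying in $T_o\Sigma$ (which is how the paper later uses it), not for an arbitrary regular normal vector; your sketch glosses over this but it does not affect the argument.
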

\begin{proof}
Follows from \cite[Corollary~3.2]{BD11a} and from well-known facts on polar
representations of compact groups \cite{Da85}.
\end{proof}

If $N$ is a submanifold of $\C H^n$, then $N$ is said to be \emph{totally
real} if for each $p\in N$ the tangent space $T_p N$ is a totally real
subspace of $T_p\C H^n$, that is, $J T_p N$ is orthogonal to $T_p N$.
See \S\ref{subsec:realsubspace} for more information of totally real
subspaces of complex vector spaces. The next theorem shows that
sections are necessarily totally real.

\begin{proposition}\label{prop:realSection}
Let $H$ act nontrivially, nontransitively, and polarly on the complex
hyperbolic space $\C H^n$, and let $\Sigma$ be a section of this
action. Then, $\Sigma$ is a totally real submanifold of $\C H^n$.
\end{proposition}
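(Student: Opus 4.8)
The plan is to reduce the statement to a fact about the slice representation at the base point and then invoke polarity of complex representations. By Proposition~\ref{prop:criterion1}, after conjugating so that $o\in\Sigma$ and $\Sigma$ is totally geodesic, the tangent space $T_o\Sigma$ is a section of the slice representation of $H_o$ on the normal space $\nu_o(H\cdot o)$. Since the isotropy group $H_o$ is a compact subgroup of $K$, the slice representation is a polar representation of a compact group on the Euclidean space $\nu_o(H\cdot o)$, which carries the ambient complex structure $J$ restricted from $T_o\C H^n$ in the sense that $H_o\subset K$ acts by complex-linear maps (elements of $K=S(U(1)U(n))$ commute with $J$). So the normal space is acted on complex-linearly, though it need not itself be a complex subspace. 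The key point I want is: a section of a polar representation of a compact group acting by complex-linear isometries on a complex vector space must be totally real.

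First I would set up the linear-algebraic claim precisely: let $\rho$ be a polar representation of a compact Lie group $L$ on a real vector space $W$ that is equipped with an $L$-invariant complex structure $j$ (i.e. $L$ acts $\C$-linearly), and let $V\subset W$ be a section. Then $jV\perp V$, i.e. $V$ is totally real. To prove this I would use the standard description of sections of polar representations: $V=T_v(L\cdot v)^{\perp}$ for a regular vector $v\in V$, and more usefully, the Lie-algebra characterization $V=\{w\in W : \langle \rho_*(\g{l})w, v\rangle=0\}$ together with the fact (Dadok) that the section meets every orbit orthogonally and the slice representation of $L_v$ on $V$ at a regular point is trivial. The cleanest route: take a regular $v\in V$; then $T_v(L\cdot v)=\rho_*(\g{l})v$, and $V=(\rho_*(\g{l})v)^\perp$. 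I want to show $jv\in (\rho_*(\g{l})v)^\perp$ would force a contradiction with regularity unless handled carefully — actually the right statement is $jV\perp V$, and since $j$ is an isometry it suffices to show $jv\perp V$ for regular $v$, i.e. $jv\in \rho_*(\g{l})v=T_v(L\cdot v)$. Here I would use the gradient characterization: $jv$ is the value at $v$ of the Killing field generated by... no — instead observe that $\langle jv, \rho_*(X)v\rangle = \langle j v, \rho_*(X) v\rangle$ and, because $\rho_*(X)$ is skew-symmetric and commutes with $j$, one shows the quadratic form $w\mapsto \langle jw,w\rangle$ has gradient $-2jw$ and is constant on orbits (being $L$-invariant since $j$ is invariant). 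A function that is $L$-invariant has gradient tangent to... no, \emph{normal} to orbits — wait, invariant functions have gradient \emph{perpendicular} to orbits; but this function $\langle jw,w\rangle\equiv 0$ since $j$ is skew. That gives nothing. The correct approach is Podestà–Thorbergsson's observation, used in \cite{PT99}: in a polar representation the section $V$ satisfies $V\cap jV=\{0\}$ generically and then by a dimension/eigenvalue argument $jV\perp V$; alternatively one appeals directly to the known fact that sections of polar representations on Hermitian vector spaces are totally real, which follows because the section is a flat totally geodesic submanifold through $0$ and the restricted root system argument forces $J$-anti-invariance.

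The honest structure I would therefore write is: (1) reduce via Proposition~\ref{prop:criterion1} to showing the section $T_o\Sigma$ of the slice representation of $H_o$ on $\nu_o(H\cdot o)$ is totally real as a subspace of $T_o\C H^n$; (2) note $\nu_o(H\cdot o)$ is a $J$-invariant-or-not subspace but $H_o$ acts $\C$-linearly because $H_o\subset K$; (3) prove the linear-algebra lemma that a section of a polar representation of a compact group acting by unitary transformations on $(\C^m,J)$ is totally real — citing \cite{PT99} or \cite{Da85} for the structure of such sections, and giving the short argument that if $v\in V$ is regular and $Jv$ had a nonzero component in $V$, then since $J$ maps the orbit $L\cdot v$ isometrically to $L\cdot(Jv)$ with the same normal space, and $V$ is the common normal space at $v$ to all orbits it meets, one derives $JV=V$ on that component, contradicting that the slice representation on $V$ is trivial while $J$ generates a nontrivial circle action on $V\cap JV$; (4) for a general (not necessarily regular) point $p\in\Sigma$, use that $\Sigma$ is totally geodesic and the totally-real condition is closed, so it holds on all of $\Sigma$. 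The main obstacle is Step~(3): one must be careful that $\nu_o(H\cdot o)$ is not assumed $J$-invariant, so "$J$ restricted to the normal space" is not defined as an endomorphism of it; the fix is to work inside all of $T_o\C H^n$ and use that $J(T_o\Sigma)$ lands in $T_o(H\cdot o)\oplus(\nu_o(H\cdot o)\ominus T_o\Sigma)$, then the polar-representation argument kills the second summand and Proposition~\ref{prop:criterion1}(a) kills the first. I expect this orthogonality bookkeeping, rather than any deep idea, to be where the real work lies.
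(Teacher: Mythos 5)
Your reduction to the slice representation cannot work, because the linear-algebraic lemma you want in Step~(3) is false: a section of a polar representation of a compact group acting by unitary transformations on a Hermitian vector space need \emph{not} be totally real. The simplest counterexample is a polar representation with a trivial complex summand, e.g.\ $U(1)$ acting on $\C^2=\C\oplus\C$ by rotation in the first factor and trivially in the second; this is polar with section $\R\oplus\C$, which contains a complex line. (This is precisely why Theorem~A must \emph{impose} that $Q$ acts polarly \emph{with a totally real section} on $\C^{n-k}$ --- it is an extra hypothesis, not automatic.) Your own attempts to prove the lemma collapse, as you note (``that gives nothing''), and the proposed fix via $J(L\cdot v)=L\cdot(Jv)$ yields no contradiction in the counterexample: there $V\cap JV$ is the trivial complex summand, and the circle generated by $J$ on it is perfectly compatible with the slice representation of $L_v$ on $V$ being trivial. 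The data ``the slice representation is polar and $H_o$ acts $\C$-linearly'' is simply not enough to force $T_o\Sigma$ to be totally real.

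What the paper uses, and what your proposal omits entirely, is the global structure of the section: $\Sigma$ is a \emph{totally geodesic} submanifold of $\C H^n$, hence (since $T_o\Sigma$ is a Lie triple system, equivalently by the classification of totally geodesic submanifolds of $\C H^n$) it is either totally real or complex, so only the complex case needs to be excluded. The paper excludes it by a genuinely different mechanism: if $\Sigma$ were complex then, since all sections are $H$-translates of $\Sigma$ and $H$ acts holomorphically, every principal orbit would be almost complex, hence a K\"ahler submanifold; but for a polar action every equivariant normal field on a principal orbit is parallel in the normal connection, and the normal holonomy theorem for K\"ahler submanifolds \cite{ADS04} then forces such an orbit to be a point or all of $\C H^n$, contradicting nontriviality and nontransitivity. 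Neither the totally-real/complex dichotomy nor the normal-holonomy argument appears in your proposal, so the gap at Step~(3) is fatal rather than a matter of ``orthogonality bookkeeping''.
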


\begin{proof}
Since the action of $H$ is polar, the section $\Sigma$ is a totally
geodesic submanifold of $\C H^n$, hence $\Sigma$ is either totally
real or complex. Assume that $\Sigma$ is complex.

Since all sections are of the form $h(\Sigma)$, with $h\in H$, and the
isometries of $H$ are holomorphic, it follows that any principal orbit is
almost complex. It is a well-known fact that an almost complex submanifold in a
K\"{a}hler manifold is K\"{a}hler.

Since every $H$-equivariant
normal vector field on a principal orbit is parallel with respect to
the normal connection~\cite[Corollary 3.2.5]{BCO03}, then this
principal orbit is either a point or $\C H^n$ (see for
example~\cite{ADS04}), contradiction. Therefore $\Sigma$ is totally
real.
\end{proof}

Finally, we prove a general result that will help us to decide when two actions are orbit equivalent.
\begin{lemma}\label{lemma:sameslice}
Let $H\subset\tilde{H}$ be connected groups of isometries of a complete connected Riemannian manifold $M$ acting properly on $M$. Assume the following conditions are satisfied:
\begin{enumerate}[{\rm (i)}]
\item There exists $o\in M$ such that $H\cdot o=\tilde{H}\cdot o$.
\item The principal orbits of the slice representation of $H$ at $o$ are orbits of the slice representation of $\tilde{H}$ at $o$.
\end{enumerate}
Then $H$ and $\tilde{H}$ act on $M$ with the same orbits.
\end{lemma}
\begin{proof}
For any regular vector $\xi\in \nu_o(H\cdot o)=\nu_o(\tilde{H}\cdot o)$ of the slice representation of $H$ at $o$, the codimension of an orbit of $H$ (resp.\ of $\tilde{H}$) through $\exp_o(\xi)$ coincides with the codimension of the orbit of the slice representation of $H$ (resp.\ of $\tilde{H}$) through $\xi$. Since the orbits of $H$ are contained in the orbits of $\tilde{H}$, we have that the principal orbits of the $H$-action on $M$ are also orbits of the $\tilde{H}$-action on $M$.

Now let $H\cdot q$ be any orbit of the $H$-action on $M$, not necessarily principal. Fix a principal $H$-orbit $H\cdot p$ that is also a principal $\tilde{H}$-orbit; this is possible since the union of all principal orbits of the proper action of $H$ (and also of $\tilde{H}$) is an open dense subset of $M$. Let $\eta$ be the $H$-equivariant normal vector field along the principal orbit $H\cdot p$ whose exponentation gives the orbit $H\cdot q$, i.e.\ $H\cdot q=\{\exp_{h(p)}\eta:h\in H\}$, which exists by \cite[\S3.1h]{BCO03}. Since  the orbit of $\tilde{H}$ through $p$ is principal, the vector $\eta_p$ extends in a unique way to an $\tilde{H}$-equivariant normal vector field along $\tilde{H}\cdot p=H\cdot p$. This normal field is also $H$-equivariant since $H\subset\tilde{H}$, and thus it must coincide with $\eta$. Again by \cite[\S3.1h]{BCO03}, the exponentiation of $\eta$ produces the $\tilde{H}$-orbit through $q$, that is, $\tilde{H}\cdot q=\{\exp_{h(p)}\eta:h\in \tilde{H}\}$. But then $H\cdot q=\tilde{H}\cdot q$.
\end{proof}

\subsection{The structure of a real subspace of a complex vector
space}\label{subsec:realsubspace}

Let us denote by $J$ the complex structure of the complex vector
space $\C^n$. We view $\C^n$ as a Euclidean vector space with the
scalar product given by the real part of the standard Hermitian
scalar product. We define a \emph{real subspace} of~$\C^n$ to be an
$\R$-linear subspace of the real vector space obtained from $\C^n$ by
restricting the scalars to the real numbers. Let $V$ be a real
subspace of~$\C^n$. We will denote by  $\pi_V$ the orthogonal
projection map onto $V$.

The \emph{K\"ahler angle} of a nonzero vector $v\in V$ with respect
to $V$ is defined to be the angle between $Jv$ and $V$ or,
equivalently, the value $\varphi\in[0,\pi/2]$ such that $\langle
\pi_V Jv, \pi_V Jv\rangle=\cos^2(\varphi)\langle v, v\rangle$. We say
that $V$ has \emph{constant K\"ahler angle} $\varphi$ if the K\"ahler
angle of every nonzero vector $v\in V$ with respect to $V$ is
$\varphi$. In particular, $V$ is a \emph{complex subspace} if and
only if it has constant K\"{a}hler angle~$0$; it is a \emph{totally real
subspace} if and only if it has constant K\"{a}hler angle~$\pi/2$.

\begin{remark}\label{remark:KahlerAngle}
If $\{e_1,\dots,e_n\}$ and $\{f_1,\dots,f_n\}$ both are $\C$-orthonormal
bases of~$\C^n$, then the real subspace~$V_\varphi$ of
$\C^{2n}=\C^n\oplus\C^n$ generated by
\[
\{\cos({\textstyle\frac{\varphi}{2}})e_1+\sin({\textstyle\frac{\varphi}{2}})Jf_1,
\cos({\textstyle\frac{\varphi}{2}})Je_1+\sin({\textstyle\frac{\varphi}{2}})f_1,\dots,
\cos({\textstyle\frac{\varphi}{2}})e_n+\sin({\textstyle\frac{\varphi}{2}})Jf_n,
\cos({\textstyle\frac{\varphi}{2}})Je_n+\sin({\textstyle\frac{\varphi}{2}})f_n\}
\]
has constant K\"ahler angle $\varphi\in[0,\pi/2)$. Conversely, any
subspace of constant K\"ahler angle~$\varphi\in[0,\pi/2)$ and
dimension~$2n$ of~$\C^{2n}$ can be constructed in this way up to a transformation in $U(2n)$,
see~\cite{BB01}. In particular, it follows that two real subspaces of $\C^{n}$ with the same dimension and the same constant K\"{a}hler angle are congruent by an element of $U(n)$.
\end{remark}

For general real subspaces of a complex vector space, we have the
following structure result.

\begin{theorem}\label{th:structure}
Let $V$ be any real subspace of~$\C^n$. Then $V$ can be decomposed in a unique
way as an orthogonal sum of subspaces $V_i$, $i=1,\dots,r$, such that:
\begin{itemize}
\item[(a)] Each real subspace $V_i$ of~$\C^n$ has constant K\"ahler  angle
    $\varphi_i$.
\item[(b)] $\C V_i\perp\C V_j$, for every $i\neq j$, $i,j\in\{1,\dots,r\}$.
\item[(c)] $\varphi_1<\varphi_2<\dots<\varphi_r$.
\end{itemize}
\end{theorem}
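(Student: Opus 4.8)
The plan is to build the decomposition by an inductive extraction of the "smallest Kähler angle" piece, using a symmetric bilinear form on $V$ whose spectral decomposition controls everything. Concretely, consider the endomorphism $P = \pi_V \circ J|_V \colon V \to V$; it is not symmetric, but $A := -P^2 = -\pi_V J \pi_V J|_V$ is a symmetric, positive semidefinite endomorphism of $V$ (symmetry follows from $\langle \pi_V J x, y\rangle = \langle Jx, y\rangle = -\langle x, Jy\rangle = -\langle x, \pi_V Jy\rangle$ for $x,y\in V$, so $P^* = -P$ as a map $V\to V$, whence $A = P^*P \geq 0$). For a nonzero $v\in V$ one computes $\langle Av, v\rangle = \langle Pv, Pv\rangle = \langle \pi_V Jv, \pi_V Jv\rangle = \cos^2(\varphi(v))\,\langle v,v\rangle$, so the Kähler angle of $v$ is constant equal to $\varphi$ exactly when $v$ is an eigenvector of $A$ with eigenvalue $\cos^2\varphi$; more generally $V$ has constant Kähler angle $\varphi$ iff $A = \cos^2(\varphi)\,\id_V$. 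Thus the natural candidate for the decomposition is the eigenspace decomposition $V = \bigoplus_{i=1}^r V_i$ of the symmetric operator $A$, with eigenvalues $\cos^2\varphi_i$ ordered so that $\cos^2\varphi_1 > \cdots > \cos^2\varphi_r$, i.e. $\varphi_1 < \cdots < \varphi_r$; this immediately gives (a) and (c), and the orthogonality of the $V_i$ inside $V$ is automatic for a symmetric operator.

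The substantive point is (b): that the $V_i$ are not merely orthogonal but "complex-orthogonal," $\C V_i \perp \C V_j$ for $i\neq j$. First I would show each eigenspace $V_i$ is $P$-invariant: since $A$ commutes with $P$ (as $A = -P^2$), $P$ preserves each eigenspace of $A$. Hence $\pi_V J V_i \subset V_i$. Now take $v\in V_i$, $w\in V_j$ with $i\neq j$. I want $\langle Jv, w\rangle = 0$ and $\langle v, Jw\rangle = 0$ (these are equivalent) and also $\langle Jv, Jw\rangle = \langle v,w\rangle = 0$; the latter is immediate since $J$ is an isometry and $v\perp w$. For the former, decompose $Jv = \pi_V Jv + (Jv - \pi_V Jv)$ with $\pi_V Jv \in V_i \perp V_j \ni w$, and the component $Jv - \pi_V Jv$ lies in $V^\perp$ while $w\in V$, so $\langle Jv, w\rangle = 0$. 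This shows $JV_i \perp V_j$ and $JV_i \perp JV_j$ (the second again by $J$ being an isometry applied to $JV_i\perp V_j$... more carefully: $\langle Jv, Jw'\rangle = \langle v, w'\rangle$), and combining, $\C V_i = V_i + JV_i$ is orthogonal to $\C V_j = V_j + JV_j$, which is (b). I expect this verification to be the main obstacle only in the sense of bookkeeping; the conceptual content is entirely in recognizing $A = -\pi_V J\pi_V J|_V$ as the right symmetric operator.

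Finally I would address uniqueness. Suppose $V = \bigoplus_i V_i$ is any decomposition satisfying (a), (b), (c). Condition (b) forces, for $v\in V_i$, that $Jv$ has zero component along $V_j$ for $j\neq i$ (since $\C V_i\perp \C V_j$ gives in particular $Jv\perp V_j$), so $\pi_V Jv = \pi_{V_i} Jv \in V_i$; applying $\pi_V J$ again, $Av = -\pi_V J\pi_V Jv \in V_i$, and by (a) each $v\in V_i$ has $\langle Av,v\rangle = \cos^2\varphi_i\langle v,v\rangle$ — together with $A$-invariance of $V_i$ and symmetry of $A|_{V_i}$ this yields $A|_{V_i} = \cos^2(\varphi_i)\id_{V_i}$. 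Hence $V_i$ is contained in the $\cos^2\varphi_i$-eigenspace of $A$; since the $V_i$ span $V$ and the $\varphi_i$ are distinct by (c), each $V_i$ must equal that eigenspace. Therefore the decomposition coincides with the spectral decomposition of $A$, proving uniqueness (up to nothing — the ordering is pinned down by (c)). This completes the argument; the only genuinely delicate step is the $P$-invariance of eigenspaces plus the projection-orthogonality computation establishing (b), and both are short once the operator $A$ is in hand.
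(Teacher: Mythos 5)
Your proposal is correct and follows essentially the same route as the paper: both take $P=\pi_V\circ J|_V$, observe it is skew-symmetric, and obtain the $V_i$ as the eigenspaces of the symmetric operator $-P^2$, with the Kähler angles read off from the eigenvalues $\cos^2\varphi_i$ and property (b) verified via the $P$-invariance of the eigenspaces. Your treatment of uniqueness is somewhat more explicit than the paper's one-line remark, but the underlying argument is identical.
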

\begin{proof}
The endomorphism $P=\pi_V\circ J$ of~$V$ is clearly skew-symmetric,
i.e. $\langle P v,w\rangle=-\langle v,P w\rangle$ for every $v$,
$w\in V$. Then, there exists an orthonormal basis of~$V$ for which
$P$ takes a block diagonal form with $2\times 2$ skew-symmetric
matrix blocks, and maybe one zero matrix block. Since $P$ is
skew-symmetric, its nonzero eigenvalues are pure imaginary. Assume then
that the distinct eigenvalues of~$P$ are $\pm i\lambda_1,\dots,\pm
i\lambda_r$ (maybe one of them is zero). We can and will further
assume that $\lvert \lambda_1\rvert>\dots>\lvert \lambda_r\rvert$.

Now consider the quadratic form $\Psi\colon V\to \R$ defined by
$\Psi(v)=\langle P v,P v\rangle=-\langle P^2 v,v\rangle$ for $v\in
V$. The matrix of this quadratic form $\Psi$ (or of the endomorphism
$-P^2$) with respect to the basis fixed above is diagonal with
entries $\lambda_1^2,\dots,\lambda_r^2$. For each $i=1,\dots,r$, let
$V_i$ be the eigenspace of~$-P^2$ corresponding to the eigenvalue
$\lambda_i^2$. Let $v\in V_i$ be a unit vector. Then
\[
\langle \pi_{V_i} J v, \pi_{V_i} Jv\rangle = \langle P v, \pi_{V_i} Jv\rangle
= \langle P v, P v\rangle = \Psi (v) =\lambda_i^2,
\]
where in the second and last equalities we have used that $P v\in
V_i$. This means that each subspace $V_i$ has constant K\"ahler angle
$\varphi_i$, where $\varphi_i$ is the unique value in $[0,\frac\pi2]$
such that $\lambda_i^2=\cos^2(\varphi_i)$.

By construction, it is clear that $V_i\perp V_j$ and $J V_i\perp J
V_j$  for $i\neq j$. Since for every $v\in V_i$ and $w\in V_j$,
$i\neq j$, we have that $\langle Jv,w\rangle=\langle P v,w\rangle=0$,
we also get that $J V_i\perp V_j$ if $i\neq j$. Hence $\C V_i\perp \C
V_j$ if $i\neq j$.

Property (c) follows from the assumption that $\lvert
\lambda_1\rvert>\dots>\lvert \lambda_r\rvert$, and this also implies the
uniqueness of the decomposition.
\end{proof}

It is convenient to change the notation of Theorem~\ref{th:structure}
slightly. Let $V$ be any real subspace of~$\C^n$, and let
$V=\bigoplus_{\varphi\in\Phi} V_{\varphi}$ be the decomposition
stated in Theorem~\ref{th:structure}, where $V_\varphi$ has constant
K\"{a}hler angle $\varphi\in[0,\pi/2]$, and $\Phi$ is the set of all
K\"{a}hler angles arising in this decomposition. Note that
according to Theorem~\ref{th:structure}, this decomposition is unique
up to the order of the factors. We agree to write $V_\varphi = 0$ if
$\varphi\notin\Phi$. The subspaces $V_0$ and $V_{\pi/2}$ (which can
be zero) play a somewhat distinguished role in the calculations that
follow, so we will denote $\Phi^*=\{\varphi\in\Phi:\varphi\neq
0,\pi/2\}$. Then, the above decomposition is written as
\begin{equation*}\label{eq:decomposition}
V=V_0\oplus\left(\bigoplus_{\varphi\in\Phi^*}V_\varphi\right)\oplus V_{\pi/2}.
\end{equation*}

For each $\varphi\in\Phi^*\cup\{0\}$, we define $J_\varphi\colon
V_{\varphi}\to V_{\varphi}$ by
$J_\varphi=\frac{1}{\cos(\varphi)}(\pi_{V_{\varphi}}\circ J)$. This
is clearly a skew-symmetric and orthogonal endomorphism
of~$V_{\varphi}$ (see the proof of Theorem~\ref{th:structure}).
Therefore $(V_{\varphi},J_\varphi)$ is a complex vector space for
every $\varphi\in\Phi^*\cup\{0\}$. Note that $J_0=J\vert_{V_0}$. Let
$U(V_{\varphi})$ be the group of all unitary transformations of the
complex vector space~$(V_{\varphi},J_\varphi)$, that is, the linear transformations of $V_\varphi$ that leave the Hermitian product $(v,w):=\langle v,w\rangle +
i\langle v,J_\varphi w\rangle$ invariant, or equivalently,
\[
U(V_\varphi)=\{A\in {GL}_\R(V_\varphi):A J_\varphi=J_\varphi A,
\langle Av,Aw\rangle=\langle v,w\rangle,\forall v,w\in V_\varphi\}.
\]

\begin{lemma}\label{lemma:perpphi}
Let $V$ be a real subspace of constant K\"{a}hler angle~$\varphi\neq0$ in
$\C^n$. Then the real subspace $\C V \ominus V$ of $\C^n$ has the
same  dimension as $V$ and constant K\"{a}hler angle~$\varphi$.
\end{lemma}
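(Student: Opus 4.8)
The plan is to analyze the endomorphism $P = \pi_V \circ J$ of $V$ together with the analogous endomorphism on $W := \C V \ominus V$, and to exploit the orthogonal decomposition $\C V = V \oplus W$. First I would observe that since $V$ has constant Kähler angle $\varphi \neq 0$, the composition $P = \pi_V \circ J$ satisfies $-P^2 = \cos^2(\varphi)\,\id_V$, so $P$ is (up to the scalar $\cos\varphi$) a complex structure on $V$; in particular $P$ is invertible, hence $\dim_\R V$ is even, say $2m$, and $\C V$ has complex dimension $2m$ as well — no wait, I must be careful: $\C V$ has real dimension at most $2\dim V = 4m$, but it could be smaller. Actually the cleaner route: I would show directly that $W$ has constant Kähler angle $\varphi$ and compute its dimension from that of $\C V$.

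The key computation is as follows. For $v \in V$, write $Jv = Pv + (Jv - Pv)$, where $Pv \in V$ and $Jv - Pv \in W$ (since $Jv \in \C V$ and $W = \C V \ominus V$). Thus the orthogonal projection of $Jv$ onto $W$ is $Jv - Pv$, and $|Jv - Pv|^2 = |Jv|^2 - |Pv|^2 = (1 - \cos^2\varphi)|v|^2 = \sin^2(\varphi)|v|^2$. Now apply $J$: since $J$ is an isometry of $\C^n$ preserving $\C V$, the map $v \mapsto J(Jv - Pv) = -v - JPv$ sends $V$ into $\C V$; here $-v \in V$ and $-JPv \in \C V$, so the $W$-component of $-v - JPv$ is $-\pi_W(JPv) = -(JPv - \pi_V(JPv)) = -(JPv - P^2 v) = -(JPv + \cos^2(\varphi)v)$, whose $W$-part is just $-\pi_W(JPv)$. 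This shows that the operator $w \mapsto Jw - \pi_V(Jw)$ on $W$, restricted to the image $\{Jv - Pv : v \in V\} \subseteq W$, has the form (up to identifications) $-(1/\sin^2\varphi)$ times something computable, and iterating the Kähler-angle identity one finds that every vector in that image has Kähler angle $\varphi$ with respect to $W$. I would then check that the map $v \mapsto Jv - Pv$ is a linear isomorphism from $V$ onto $W$: it is injective because $Jv = Pv$ would force $Jv \in V$, i.e. $v$ lies in the complex part $V_0$, which is zero since $\varphi \neq 0$; and it is surjective onto $W$ by a dimension count, using $\C V = V + JV$ and $JV \subseteq V + W$ via $Jv = Pv + (Jv - Pv)$, so that $W = \C V \ominus V$ is spanned by the vectors $Jv - Pv$. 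Hence $\dim W = \dim V$.

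Finally, to nail down that $W$ has \emph{constant} Kähler angle $\varphi$ (not merely that the special vectors $Jv - Pv$ do), I would either invoke Theorem~\ref{th:structure} applied to $W$ — decomposing $W = \bigoplus W_\psi$ and checking, via the isomorphism $v \mapsto Jv - Pv$ and the bracket/projection identities, that only $\psi = \varphi$ can occur — or, more directly, compute $\pi_W \circ J|_W$ explicitly: for $w = Jv - Pv$ one gets $\pi_W(Jw) = -\pi_W(JPv) = -(Jw' - Pv')$ where $w' = $ the image of $Pv$, yielding $(\pi_W \circ J)^2 = -\cos^2(\varphi)\,\id_W$ on all of $W$. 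The main obstacle is bookkeeping the orthogonal projections correctly: one must repeatedly use that $\C V$ is $J$-invariant so that $\pi_{\C V} \circ J = J \circ \pi_{\C V}$, and that $\pi_W = \pi_{\C V} - \pi_V$ on $\C^n$, together with the defining relation $\pi_V \circ J \circ \pi_V \circ J = -\cos^2(\varphi)\,\pi_V$ coming from constant Kähler angle. Once the identity $(\pi_W \circ J)^2|_W = -\cos^2(\varphi)\,\id_W$ is established, the constancy of the Kähler angle of $W$ and the equality of dimensions follow immediately, completing the proof.
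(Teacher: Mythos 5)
Your argument is correct, and it is worth noting that the paper itself does not prove this lemma at all: it simply cites \cite[page~135]{BD09}, so any honest comparison is between your self-contained proof and a reference outside the paper. Your final argument is the standard one and it works: the map $\sigma\colon v\mapsto Jv-\pi_V(Jv)=Jv-Pv$ lands in $W=\C V\ominus V$, is injective because $Jv=Pv$ would force $v$ to have K\"ahler angle $0$, and is surjective because $\C V=V+JV\subseteq V\oplus\sigma(V)$; moreover $\lVert\sigma(v)\rVert^2=\sin^2(\varphi)\lVert v\rVert^2$ and $\pi_W(J\sigma(v))=-\sigma(Pv)$, which together with $-P^2=\cos^2(\varphi)\id_V$ give $\lVert\pi_W(Jw)\rVert^2=\cos^2(\varphi)\lVert w\rVert^2$ for all $w\in W$, i.e.\ constant K\"ahler angle $\varphi$, and $\dim W=\dim V$. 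Two small blemishes, neither fatal: (1) the opening claim that $P$ is invertible and hence $\dim_\R V$ is even is false precisely when $\varphi=\pi/2$ (there $P=0$ and $V$ may be odd-dimensional); you abandon that branch, and your ``cleaner route'' does handle $\varphi=\pi/2$ correctly, where $\sigma=J|_V$ and $W=JV$ --- but you should strike the erroneous sentence rather than leave it as an aside. (2) The middle of your second paragraph (``$-(1/\sin^2\varphi)$ times something computable'') is vague hand-waving that you then supersede with the precise identity $(\pi_W\circ J)^2|_W=-\cos^2(\varphi)\id_W$ in the last paragraph; in a final write-up keep only the latter, noting as you do that $\pi_W\circ J|_W$ is skew-symmetric so that this operator identity is equivalent to the statement about the quadratic form $\lVert\pi_W(Jw)\rVert^2$. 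The hypothesis $\varphi\neq 0$ enters exactly where it should, in the injectivity of $\sigma$ and in $\sin\varphi\neq 0$.
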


\begin{proof}
See for example~\cite[page~135]{BD09}.
\end{proof}

Let $V^\perp=\C^n\ominus V$, where as usual $\ominus$ denotes the
orthogonal complement. Then, Lemma~\ref{lemma:perpphi} implies that
the decomposition stated in Theorem~\ref{th:structure} can be written
as
\[
V^\perp=V_0^\perp\oplus\left(\bigoplus_{\varphi\in\Phi^*}
V_{\varphi}^\perp\right)\oplus V_{\pi/2}^\perp,\quad \text{where $\C
V_{\varphi}=V_\varphi\oplus V^\perp_\varphi$ for each
$\varphi\in\Phi^*\cup\{\pi/2\}$}.
\]
We define $m_{\varphi}=\dim V_\varphi$ and $m^\perp_{\varphi}=\dim
V^\perp_{\varphi}$. For every $\varphi\neq 0$ we have
$m_\varphi=m^\perp_\varphi$ by Lemma~\ref{lemma:perpphi}, but $V_0$
and $V_0^\perp$ are both complex subspaces of~$\C^n$, possibly of
different dimension.

\begin{lemma}\label{lemma:normalizer}
Let $V$ be a real subspace of~$\C^n$. Let $U(n)_V$ be the subgroup
of~$U(n)$ consisting of all the elements $A \in U(n)$ such that $A V
= V$. Then, we have the canonical isomorphism
\[
U(n)_V \cong \left[ \prod_{\varphi\in\Phi^*\cup\{0\}} U(V_{\varphi}) \right] \times
O(V_{\pi/2}) \times U(V_0^\perp),
\]
where we assume that $V_{\varphi}$, $\varphi \in\Phi^*\cup\{0\}$, is
endowed with the complex structure given by $J_\varphi =
\frac{1}{\cos(\varphi)}(\pi_{V_\varphi} \circ J)$, and that
$V_0^\perp$ is endowed with the complex structure given by the
restriction of~$J$.
\end{lemma}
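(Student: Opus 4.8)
The plan is to exhibit mutually inverse group homomorphisms between the two sides. Throughout write $P=\pi_V\circ J$ for the skew-symmetric endomorphism of $V$ from the proof of Theorem~\ref{th:structure}, so that $V_\varphi$ is the eigenspace of $-P^2$ with eigenvalue $\cos^2\varphi$ and, for every $\varphi\in\Phi^*\cup\{0\}$, $P|_{V_\varphi}=\cos(\varphi)\,J_\varphi$ (immediate from the definition of $J_\varphi$ and the fact, established in that proof, that $Pv\in V_\varphi$ for $v\in V_\varphi$). The first thing I would record is the orthogonal decomposition of the ambient space into $J$-invariant subspaces,
\[
\C^n=V_0\oplus\Bigl(\bigoplus_{\varphi\in\Phi^*}\C V_\varphi\Bigr)\oplus\C V_{\pi/2}\oplus V_0^\perp,
\]
with $\C V_\varphi=V_\varphi\oplus V_\varphi^\perp$ for $\varphi\in\Phi^*\cup\{\pi/2\}$ and $V_0^\perp=\C^n\ominus\C V$; orthogonality of the summands is property~(b) of Theorem~\ref{th:structure} together with the definition of $V_0^\perp$. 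I would also note that $V_\varphi\cap JV_\varphi=0$ whenever $\varphi\neq0$, since a nonzero vector in this intersection would span a $J$-invariant line inside $V_\varphi$ and hence have K\"{a}hler angle~$0$; thus $\C V_\varphi=V_\varphi\oplus JV_\varphi$ as a (generally non-orthogonal) direct sum, and likewise $\C V_{\pi/2}=V_{\pi/2}\oplus JV_{\pi/2}$, which is orthogonal since $V_{\pi/2}$ is totally real.

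For the forward map, let $A\in U(n)_V$. Splitting $Jv=\pi_V(Jv)+u$ with $u\perp V$ for $v\in V$ and using that $A$ is $\C$-linear with $AV=V$ (so $Au\perp AV=V$) gives $\pi_V(A(Jv))=A(\pi_V(Jv))$; hence $A|_V$ commutes with $P$, therefore with $P^2$, and so $A$ preserves each eigenspace $V_\varphi$. Being $\C$-linear and preserving $V$, the map $A$ preserves $\C V$, hence also $V_0^\perp=\C^n\ominus\C V$. On $V_\varphi$ with $\varphi\in\Phi^*\cup\{0\}$ the identity $P|_{V_\varphi}=\cos(\varphi)J_\varphi$ shows $A|_{V_\varphi}$ commutes with $J_\varphi$, so $A|_{V_\varphi}\in U(V_\varphi)$; trivially $A|_{V_{\pi/2}}\in O(V_{\pi/2})$ and $A|_{V_0^\perp}\in U(V_0^\perp)$. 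This defines a group homomorphism $F\colon U(n)_V\to\bigl[\prod_{\varphi\in\Phi^*\cup\{0\}}U(V_\varphi)\bigr]\times O(V_{\pi/2})\times U(V_0^\perp)$ by $A\mapsto\bigl((A|_{V_\varphi})_\varphi,\,A|_{V_{\pi/2}},\,A|_{V_0^\perp}\bigr)$.

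For the inverse $G$, given a tuple $\bigl((A_\varphi)_{\varphi},B,C\bigr)$ I would keep $A_0$ on $V_0$ and $C$ on $V_0^\perp$, and extend $A_\varphi$ (for $\varphi\in\Phi^*$) and $B$ to $\C V_\varphi$ and $\C V_{\pi/2}$ by the $\C$-linear formula $\widehat{A}(v+Jw)=Av+JAw$, which is well defined by the intersection fact above. The one step that is not pure bookkeeping is that $\widehat{A_\varphi}$ is unitary for the standard Hermitian product: expanding $\langle v_1+Jw_1,\,v_2+Jw_2\rangle$ and using $\langle v,Jw\rangle=\cos(\varphi)\langle v,J_\varphi w\rangle$ for $v,w\in V_\varphi$ (which is exactly the definition of constant K\"{a}hler angle) reduces the claim to the hypotheses that $A_\varphi$ is orthogonal and commutes with $J_\varphi$; for $B$ the analogous cross terms vanish because $V_{\pi/2}$ is totally real. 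Assembling $A_0$, the $\widehat{A_\varphi}$, $\widehat B$ and $C$ over the orthogonal decomposition of $\C^n$ produces a $\C$-linear isometry $A$ with $AV=V$, i.e.\ $A\in U(n)_V$. Finally $G$ is inverse to $F$: $F\circ G=\id$ is immediate from $\widehat{A}|_{V_\varphi}=A_\varphi$, while $G\circ F=\id$ follows because $\C$-linearity of any $A\in U(n)_V$ yields $\widehat{A|_{V_\varphi}}=A|_{\C V_\varphi}$. The only genuine obstacle is the unitarity verification for $\widehat{A_\varphi}$, which is precisely where the constant K\"{a}hler angle hypothesis is used; everything else is careful bookkeeping with the decomposition above.
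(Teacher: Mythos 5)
Your proof is correct and follows essentially the same route as the paper's: restrict an element of $U(n)_V$ to the eigenspaces of $-P^2$ to obtain the homomorphism onto the product, and reconstruct a unitary map from a given tuple via $v+Jw\mapsto A_\varphi v+JA_\varphi w$ on each $\C V_\varphi$. The only notable difference is that you explicitly check that the reconstructed map is an isometry, using $\langle v,Jw\rangle=\cos(\varphi)\langle v,J_\varphi w\rangle$ on the constant K\"ahler angle pieces, a verification the paper's surjectivity step leaves implicit.
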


\begin{proof}
Let $A \in U(n)$ be such that $A V = V$. Then $A$ commutes with $J$
and $\pi_V$ and hence leaves the eigenspaces of~$-P^2$ invariant (see
the proof of Theorem~\ref{th:structure}). Thus $A V_{\varphi} =
V_{\varphi}$. Since we also have $A V^\perp = V^\perp$, it follows
that $A V_{\varphi}^\perp = V_{\varphi}^\perp$.

Let $\varphi\in\Phi^*\cup\{0\}$. Since $AV_{\varphi}=V_{\varphi}$ and
$AV_{\varphi}^\perp=V_{\varphi}^\perp$ we have $A\C V_{\varphi}=\C
V_{\varphi}$. Clearly,
$A\circ\pi_{V_{\varphi}}\vert_{V_{\varphi}}=\pi_{V_{\varphi}}\circ
A\vert_{V_{\varphi}}$, and $A\circ\pi_{V_{\varphi}}\vert_{\C^n\ominus
V_{\varphi}}=0=\pi_{V_{\varphi}}\circ A\vert_{\C^n\ominus
V_{\varphi}}$. Hence, $A\circ\pi_{V_{\varphi}}=\pi_{V_{\varphi}}\circ
A$. Since $AJ=JA$ as well, we have that $A\circ
J_\varphi\vert_{V_{\varphi}}=J_\varphi\circ A\vert_{V_{\varphi}}$ on
$V_{\varphi}$, and thus, $A|_{V_{\varphi}} \in U(V_{\varphi})$. If
$\varphi = {\pi}/{2}$ then we have $AV_{{\pi}/{2}}=V_{{\pi}/{2}}$,
and clearly, $A\vert_{V_{\pi/2}}$ is an orthogonal transformation
of~$V_{\pi/2}$. Moreover, we have $A|_{V_0^\perp} \in U(V_0^\perp)$.
We define a map
\[
F\colon U(n)_V \to \left[ \prod_{\varphi\in\Phi^*\cup\{0\}} U(V_{\varphi}) \right] \times
O(V_{\pi/2}) \times U(V_0^\perp)
\]
by requiring that the projection onto each factor is given by the
corresponding restriction, that is, the $U(V_\varphi)$-projection of
$F(A)$ is given by $A\vert_{V_\varphi}$, the
$O(V_{{\pi}/{2}})$-projection of $F(A)$ is $A\vert_{V_{{\pi}/{2}}}$,
and the $U(V_0^\perp)$-projection of $F(A)$ is $A\vert_{V_0^\perp}$.

Since every element in $U(n)_V$ leaves the subspaces $V_{\varphi}$,
$\varphi\in\Phi$, and $V_0^\perp$ invariant, the map thus defined is
a homomorphism. Let us show injectivity and surjectivity. Let
$A_\varphi \in U(V_{\varphi})$ for each $\varphi\in\Phi^*\cup\{0\}$,
let $A_{{\pi}/{2}} \in O(V_{{\pi}/{2}})$, and let $A_0^\perp \in
U(V_0^\perp)$. If $A \in U(n)_V$ and $v \in JV_{\varphi}$ for
$\varphi\in\Phi$, then $Av$ is determined by $A_\varphi$ and $v$,
since $Av = - A J^2 v = - J A J v = - J A_\varphi (J v)$. Since we
have the direct sum decomposition
\[
\C^n = \left[\bigoplus_{\varphi\in\Phi}\C V_{\varphi}\right] \oplus V_0^\perp,
\]
it follows that the unitary map~$A$ on $\C^n$ is uniquely determined
by the maps $A_\varphi$, $\varphi\in\Phi$, and $A_0^\perp$. This
shows injectivity.

Conversely, let $A\in \left[ \prod_{\varphi\in\Phi^*\cup\{0\}}
U(V_{\varphi}) \right] \times O(V_{{\pi}/{2}}) \times U(V_0^\perp)$,
and denote by $A_\varphi$ the $U(V_\varphi)$-projection, by
$A_{{\pi}/{2}}$ the $O(V_{{\pi}/{2}})$-projection, and by $A_0^\perp$
the $U(V_0^\perp)$-projection. Then, we may construct a map $A \in
U(n)_V$ be defining $A(v+Jw)=A_\varphi v+JA_\varphi w$ for all $v$,
$w\in V_{\varphi}$, $\varphi\in\Phi$, $Av = A_0^\perp v$ for $v \in
V_0^\perp$, and extending linearly. For the map~$A$ thus defined we
have $A\vert_{V_{\varphi}} = A_\varphi$ for $\varphi\in\Phi$, and
$A\vert_{V_0^\perp} = A_0^\perp$. This proves surjectivity.
\end{proof}

\begin{remark}\label{rem:KahlerAngles}
Let $V$ and $W$ be two real subspaces of $\C^n$ whose K\"{a}hler angle decompositions have the same set of K\"{a}hler angles $\Phi$ and the same dimensions, that is, the decompositions given by Theorem~\ref{th:structure} are $V=\oplus_{\varphi\in\Phi}V_\varphi$ and $W=\oplus_{\varphi\in\Phi}W_\varphi$, with $\dim V_\varphi=\dim W_\varphi$ for all $\varphi\in\Phi$. Then, it follows from the results of this subsection that there exists a unitary automorphism $A$ of $\C^n$ such that $AV_\varphi=W_\varphi$ for all $\varphi\in\Phi$, and in particular, $AV=W$.
\end{remark}


\section{New examples of polar actions}\label{sec:examples}

We will now construct new examples of polar actions on complex
hyperbolic spaces. We will use the notation from
Subsection~\ref{subsec:CHn}.

Recall that the root space $\g{g}_\alpha$ is a complex vector space,
which we will identify with~$\C^{n-1}$. Let $\g{w}$ be a real
subspace of~$\g{g}_\alpha$ and
\[
\g{w}=\bigoplus_{\varphi\in\Phi} \g{w}_{\varphi}=
\g{w}_0\oplus\left(\bigoplus_{\varphi\in\Phi^*}\g{w}_\varphi\right)\oplus\g{w}_{\pi/2}
\]
its decomposition as
in Theorem~\ref{th:structure}, where $\Phi$ is the set of all
possible K\"{a}hler angles of vectors in $\g{w}$, $\Phi^*=\{\varphi\in\Phi:\varphi\neq
0,\pi/2\}$, and $\g{w}_\varphi$ has constant K\"{a}hler angle
$\varphi\in[0,{\pi}/{2}]$. Similarly, define
$\g{w}^\perp=\g{g}_\alpha\ominus\g{w}$ and let
\[
\g{w}^\perp=
\g{w}_0^\perp\oplus\left(\bigoplus_{\varphi\in\Phi^*}\g{w}_\varphi^\perp\right)
\oplus\g{w}_{\pi/2}^\perp
\]
be the corresponding decomposition as in Theorem~\ref{th:structure}.
We define $m_{\varphi}=\dim\g{w}_\varphi$ and
$m^\perp_{\varphi}=\dim\g{w}^\perp_\varphi$, and recall that
$m_\varphi=m_\varphi^\perp$ if $\varphi\in(0,\pi/2]$. Recall also
that $K_0$, the connected subgroup of $G=SU(1,n)$ with Lie algebra
$\g{k}_0$, is isomorphic to $U(n-1)$ and acts on
$\g{g}_\alpha\cong\C^{n-1}$ in the standard way. We denote by $N_{K_0}(\g{w})$ the normalizer of~$\g{w}$ in~$K_0$, and by $\g{n}_{\g{k}_0}(\g{w})$ the normalizer of~$\g{w}$ in~$\g{k}_0$.
We know from Lemma~\ref{lemma:normalizer} that
\begin{equation}\label{eq:wnormalizer}
N_{K_0}(\g{w})\cong\left[ \prod_{\varphi\in\Phi^*\cup\{0\}}
U(\g{w}_{\varphi}) \right] \times
O(\g{w}_{\pi/2}) \times U(\g{w}^\perp_{0}).
\end{equation}
This group leaves invariant each $\g{w}_\varphi$ and each
$\g{w}_\varphi^\perp$, and acts transitively on the unit spheres of these
subspaces of constant K\"ahler angle. Moreover, it acts polarly
on~$\g{w}^\perp$, see Remark~\ref{rem:k0polar} below.

The following result provides a large family of new examples of polar
actions on $\C H^n$.

\begin{theorem} \label{th:with_a_general}
Let $\g{w}$ be a real subspace of~$\g{g}_\alpha$ and $\g{b}$ a subspace
of $\g{a}$. Let
$\g{h}=\g{q}\oplus\g{b}\oplus\g{w}\oplus\g{g}_{2\alpha}$, where
$\g{q}$ is any Lie subalgebra of $\g{n}_{\g{k}_0}(\g{w})$ such that
the corresponding connected subgroup $Q$ of~$K$ acts polarly on
$\g{w}^\perp$ with section $\g{s}$. Assume $\g{s}$ is a totally real
subspace of $\g{g}_\alpha$. Then the connected subgroup $H$ of~$G$
with Lie algebra $\g{h}$ acts polarly on $\C H^n$ with section
$\Sigma=\exp_o((\g{a}\ominus\g{b})\oplus(1-\theta)\g{s})$.
\end{theorem}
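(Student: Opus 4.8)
The plan is to apply the polarity criterion of Proposition~\ref{prop:criterion2}. Since $\C H^n = AN$ acts simply transitively, and $H$ contains $\g{b}\oplus\g{w}\oplus\g{g}_{2\alpha}$ together with $\g{q}\subset\g{k}_0$, the orbit $H\cdot o$ is the orbit through $o$ of the subgroup of $AN$ with Lie algebra $\g{b}\oplus\g{w}\oplus\g{g}_{2\alpha}$; using Lemma~\ref{lemma:equivariance} to pass from $\g{a}\oplus\g{g}_\alpha\oplus\g{g}_{2\alpha}$ to $\g{p}$, one computes that $T_o(H\cdot o) = \g{b}\oplus(1-\theta)\g{w}\oplus\g{p}_{2\alpha}$ and hence the normal space is $\nu_o(H\cdot o) = (\g{a}\ominus\g{b})\oplus(1-\theta)\g{w}^\perp$. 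One checks that $T_o\Sigma = (\g{a}\ominus\g{b})\oplus(1-\theta)\g{s}$ sits inside this normal space, which requires $\g{s}\subset\g{w}^\perp$ (given) and the observation that $\g{a}\ominus\g{b}$ is orthogonal to the $\g{b}$-part.

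First I would verify that $\Sigma$ is totally geodesic. The subspace $(\g{a}\ominus\g{b})\oplus(1-\theta)\g{s}$ of $\g{p}$ is a Lie triple system: if $\g{b}\neq0$ then $\g{a}\ominus\g{b}=0$ and $\g{s}$ being totally real forces $[(1-\theta)\g{s},(1-\theta)\g{s}]=0$ by the bracket relations of $\g{a}\oplus\g{n}$ together with Lemma~\ref{lemma:aux}, so $\Sigma$ is a totally geodesic totally real submanifold (in fact flat, isometric to a Euclidean space); if $\g{b}=0$ then one must also include the abelian direction $B$ and check that $[B,(1-\theta)\g{s}]$ stays inside, which again follows from the root space bracket relations. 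Next I would verify the two conditions of Proposition~\ref{prop:criterion2}: that $\langle\g{h}, T_o\Sigma\oplus[T_o\Sigma,T_o\Sigma]\rangle=0$, and that $T_o\Sigma$ is a section for the slice representation of $H_o$ on $\nu_o(H\cdot o)$. The first is a direct orthogonality computation using $\g{s}\perp\g{w}$ (so $(1-\theta)\g{s}\perp(1-\theta)\g{w}$), $\g{b}\perp(\g{a}\ominus\g{b})$, and the fact that $\g{q}\subset\g{k}_0$ is orthogonal to $\g{p}$, together with bracket identities showing $[T_o\Sigma,T_o\Sigma]$ lands in $\g{k}_0\oplus\g{p}_{2\alpha}$ or similar, orthogonal to $\g{h}$.

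The main obstacle will be identifying the slice representation of $H_o$ on $\nu_o(H\cdot o)$ with (a representation orbit-equivalent to) the $Q$-action on $\g{w}^\perp$, and then verifying $T_o\Sigma$ is a section of it. Here $H_o$ is essentially $Q$ (up to connected components and the possible contribution of the abelian/nilpotent part fixing $o$, which I expect to act trivially on the relevant normal directions), and the slice representation on $(\g{a}\ominus\g{b})\oplus(1-\theta)\g{w}^\perp$ should split: $Q$ acts trivially on the $\g{a}$-part (since $\g{q}\subset\g{k}_0$ centralizes $\g{a}$) and acts on $(1-\theta)\g{w}^\perp\cong\g{w}^\perp$ as the given polar $Q$-action, via the isometric complex-linear identification of Lemma~\ref{lemma:equivariance}. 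Since $\g{s}$ is a section of the $Q$-action on $\g{w}^\perp$ and $\g{a}\ominus\g{b}$ is fixed pointwise, $(\g{a}\ominus\g{b})\oplus(1-\theta)\g{s}$ is a section of the slice representation. Finally, by Proposition~\ref{prop:criterion2} the $H$-action is polar with section $\Sigma=\exp_o((\g{a}\ominus\g{b})\oplus(1-\theta)\g{s})$, which is what we wanted. I would take care, in the $\g{b}=\g{a}$ versus $\g{b}=0$ distinction, to keep track of which totally geodesic submanifold $\Sigma$ is and to confirm in both cases that $\Sigma$ meets all $H$-orbits, but this follows formally from condition~(c) of the proposition once the slice representation is identified.
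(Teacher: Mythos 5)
Your overall strategy coincides with the paper's: compute $\nu_o(H\cdot o)=(\g{a}\ominus\g{b})\oplus(1-\theta)\g{w}^\perp$, identify the slice representation of $H_o$ with the trivial action on $\g{a}\ominus\g{b}$ plus the $Q$-action on $\g{w}^\perp$ via Lemma~\ref{lemma:equivariance}, and invoke Proposition~\ref{prop:criterion2}; that part of your outline is sound. The gap lies in your verification of $\langle\g{h},[T_o\Sigma,T_o\Sigma]\rangle=0$, which is the one place where the hypothesis that $Q$ acts \emph{polarly with section $\g{s}$} must be used beyond the slice representation, and you never use it there. For $v,w\in\g{s}$ one has
\[
[(1-\theta)v,(1-\theta)w]=(1+\theta)[v,w]-(1+\theta)[\theta v,w]=-(1+\theta)[\theta v,w]\in\g{k}_0,
\]
since $[v,w]=\tfrac12\langle Jv,w\rangle Z=0$ by total reality of $\g{s}$. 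This bracket is \emph{not} zero in general (your claim that $[(1-\theta)\g{s},(1-\theta)\g{s}]=0$ and that $\Sigma$ is flat is wrong: a totally real totally geodesic submanifold of $\C H^n$ is a real hyperbolic space, not a Euclidean space), and it is \emph{not} automatically orthogonal to $\g{h}$, because it lies in $\g{k}_0$ while $\g{h}$ contains $\g{q}\subset\g{k}_0$. So both mechanisms you propose --- vanishing of the bracket, or the bracket landing in ``$\g{k}_0\oplus\g{p}_{2\alpha}$ or similar, orthogonal to $\g{h}$'' --- fail.

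The missing step is the computation, via Lemma~\ref{lemma:aux}(\ref{lemma:aux:b}), that for $X=T+aB+U+xZ\in\g{h}$ with $T\in\g{q}$ and $U\in\g{w}$,
\[
\langle[(1-\theta)v,(1-\theta)w],X\rangle=-\langle(1+\theta)[\theta v,w],T\rangle=-2\langle[T,v],w\rangle,
\]
and this vanishes precisely because the $Q$-representation on $\g{w}^\perp$ is polar with section $\g{s}$ (so that $[\g{q},\g{s}]\perp\g{s}$, by Dadok's criterion for polar representations). Without this the key condition of Proposition~\ref{prop:criterion2} is unverified. Two smaller points: in the case $\g{b}=0$ one must additionally check $\langle[B,(1-\theta)v],\g{h}\rangle=0$, which reduces to $\langle(1+\theta)v,U\rangle=0$ for $U\in\g{w}$ --- your remark about the abelian direction only addresses the Lie triple system property; and the correct reason $T_o\Sigma$ is a Lie triple system is simply that it is a totally real subspace of $T_o\C H^n$ (immediate from the curvature tensor of a complex space form), not the alleged vanishing of the bracket.
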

\begin{proof}
We have that $T_o\Sigma=(\g{a}\ominus\g{b})\oplus(1-\theta)\g{s}$ and
$\nu_o(H\cdot o)= (\g{a}\ominus\g{b})\oplus(1-\theta)\g{w}^\perp$.
Since $\g{s}\subset\g{w}^\perp$, it follows that
$T_o\Sigma\subset\nu_o(H\cdot o)$. The slice representation of~$H_o$
on $\nu_o(H\cdot o)$ leaves the subspaces $\g{a}\ominus\g{b}$ and
$(1-\theta)\g{w}^\perp$ invariant. For the first one the action is
trivial, while for the second one the action is equivalent to the
representation of~$Q$ on $\g{w}^\perp$ (see
Lemma~\ref{lemma:equivariance}), which is polar with section $\g{s}$.
Hence, the slice representation of~$H_o$ on $\nu_o(H\cdot o)$ is
polar and $T_o\Sigma$ is a section of it. Let $v$,
$w\in\g{s}\subset\g{w}^\perp$. We have:
\[
[(1-\theta)v,(1-\theta)w] = (1+\theta)[v,w]-(1+\theta)[\theta v,w]
=-(1+\theta)[\theta v,w].
\]
The last equality holds because $v$ and $w$ lie in $\g{s}$, which is
a totally real subspace of~$\g{g}_\alpha$, and then
$[v,w]=\frac{1}{2}\langle Jv,w\rangle Z=0$. Since $v$,
$w\in\g{g}_\alpha$, then $\theta v\in\g{g}_{-\alpha}$ and $[\theta
v,w]\in\g{g}_0$. Hence $-(1+\theta)[\theta v,w]\in\g{k}_0$. Let
$X=T+aB+U+xZ\in\g{h}$, where $T \in \g{q}$, $U \in \g{w}$ and $a$, $x
\in \R$. Since $\g{k}_0$ is orthogonal
to~$\g{a}\oplus\g{g}_\alpha\oplus\g{g}_{2\alpha}$, we have:
\[
\langle[(1-\theta)v,(1-\theta)w],X\rangle=-\langle(1+\theta)[\theta v,w],T\rangle
=-2\langle[T,v],w\rangle= -4\langle[T,v],w\rangle_{AN} =0,
\]
where in the last equality we have used that the action of~$Q$ on
$\g{w}^\perp$ is a polar representation with section $\g{s}$. If
$\g{b}=\g{a}$, the result then follows using the criterion in
Proposition~\ref{prop:criterion1}.

If $\g{b}\neq\g{a}$ then $\g{b}=0$. In this case, let $v\in\g{s}$
and $X=T+U+xZ\in\g{h}$, where $T \in \g{q}$, $U \in \g{w}$, $x \in
\R$. Then:
\[
\langle[B,(1-\theta)v],X \rangle=\langle (1+\theta)[B,v],X\rangle
=\frac{1}{2}\langle (1+\theta)v,U\rangle=0.
\]
Since $[B,B]=0$, by linearity and the skew-symmetry of the Lie
bracket, it follows that $\langle
[T_o\Sigma,T_o\Sigma],\g{h}\rangle=0$. Again by
Proposition~\ref{prop:criterion1}, the result follows also in case
$\g{b}\neq\g{a}$.
\end{proof}

\begin{remark}\label{rem:k0polar}
In the special case $Q = N_{K_0}(\g{w})$, we obtain a polar action
on~$\C H^n$, since the whole normalizer $N_{K_0}(\g{w})$ acts polarly
on~$\g{w}^\perp$. Indeed, let $\g{s}_{\varphi}$ be any
one-dimensional subspace of $\g{w}^\perp_{\varphi}$ if
$\g{w}^\perp_{\varphi}\neq 0$, and define
$\g{s}=\bigoplus_{\varphi\in\Phi\cup\{0\}}\g{s}_{\varphi}$. Then
$\g{s}$ is a section of the action of~$N_{K_0}(\g{w})$
on~$\g{w}^\perp$. The cohomogeneity one examples introduced in
\cite{BB01} correspond to the case where $\g{w}^\perp$ has constant
K\"{a}hler angle, $\g{b} = \g{a}$ and $Q = N_{K_0}(\g{w})$.
\end{remark}

\begin{remark}
It is straightforward to describe all polar actions of closed
subgroups~$Q$ in Theorem~\ref{th:with_a_general} up to orbit
equivalence. In fact, the action of the group $N_{K_0}(\g{w})$ is
given by the products of the natural representations of the direct
factors in~\eqref{eq:wnormalizer} on the spaces
$\g{w}_{\varphi}^\perp$. By the main result of Dadok~\cite{Da85}, a
representation is polar if and only if it is orbit equivalent to the
isotropy representation of some Riemannian symmetric space.
Therefore, we obtain a representative for each orbit equivalence
class of polar actions on~$\g{w}^\perp$ given by closed subgroups
of~$N_{K_0}(\g{w})$ in the following manner. Given $\g{w}$, for each
$\varphi\in\Phi\cup\{0\}$ choose a Riemannian symmetric space $M_\varphi$
such that $\dim M_\varphi = \dim \g{w}_{\varphi}^\perp$. In case
${\pi}/{2}\in\Phi$, choose the symmetric spaces such that all of them
except possibly~$M_{{\pi}/{2}}$ are Hermitian symmetric; in case
${\pi}/{2}\notin\Phi$, choose all these symmetric spaces to be
Hermitian without exception. Then the isotropy representation of
$\prod_{\varphi\in\Phi\cup\{0\}}M_\varphi$ defines a closed subgroup
of~$N_{K_0}(\g{w})$, which acts polarly on $\g{w}^\perp$ with a
section~$\g{s}$, which is a totally real subspace of~$\g{g}_\alpha$,
see~\cite{PT99}. This construction exhausts all orbit equivalent
classes of closed subgroups in~$K_0$ leaving $\g{w}$ invariant and
acting polarly on~ $\g{w}^\perp$ with totally real section.
\end{remark}

\begin{remark}
There is a curious relation between some of the new examples of
polar actions in Theorem~\ref{th:with_a_general} and certain
isoparametric hypersurfaces constructed by the first two authors
in~\cite{DD12}. The orbit $H\cdot o$ of any of the polar actions
described in Theorem~\ref{th:with_a_general} with $\g{b}=\g{a}$ is
always a minimal (even austere) submanifold of $\C H^n$ that
satisfies the following property: the distance tubes around it are
isoparametric hypersurfaces which are hence foliated by orbits of the
$H$-action. Moreover, these hypersurfaces have constant principal
curvatures if and only if they are homogeneous (i.e.\ they are the
principal orbits of the cohomogeneity one action resulting from
choosing $\g{q}=\g{n}_{\g{k}_0}(\g{w})$ in
Theorem~\ref{th:with_a_general}); this happens precisely when the
real subspace $\g{w}^\perp$ of $\g{g}_\alpha$ has constant K\"ahler
angle. See \cite{DD12} for more details.
\end{remark}

The rest of the paper will be devoted to the proof of the classification
result stated in Theorems~A and~B. In order to justify the content of the
following sections, we will give here a sketch of the proof of
Theorem~A, and leave the details for the following sections.

Assume that $H$ is a closed subgroup of $SU(1,n)$ that acts polarly
on $\C H^n$. We will see that $H$ leaves a totally geodesic proper
subspace of $\C H^n$ invariant or it is contained in a maximal parabolic subgroup of $G$. In the
first case, $H$ leaves invariant a lower dimensional complex
hyperbolic space $\C H^k$, $k\in\{0,\dots,n-1\}$, or a real
hyperbolic space $\R H^n$. The first possibility is tackled in
Subsection~\ref{subsec:CHnInvariant}, and it follows from this part
of the paper that, roughly, the action of $H$ splits, up to orbit
equivalence, as the product of a polar action on the totally geodesic
$\C H^k$, and a polar action with a fixed point on its normal space.
Hence, the problem is reduced to the classification of polar actions
on lower dimensional complex hyperbolic spaces, which will allow us
to use an induction argument. The second possibility is addressed in
Subsection~\ref{subsec:RHnInvariant} where we show that the action of
$H$ is orbit equivalent to the action of the connected component of the identity of $SO(1,n)$, which is a
cohomogeneity one action whose orbits are tubes around a totally
geodesic $\R H^n$. Finally, assume that $H$ is contained in a maximal parabolic subgroup. The Lie algebra of a maximal parabolic subgroup
is of the form
$\g{l}=\g{k}_0\oplus\g{a}\oplus\g{g}_\alpha\oplus\g{g}_{2\alpha}$,
for some root space decomposition of $\g{su}(1,n)$
(see~\S\ref{subsec:CHn}). We show in Section~\ref{sec:parabolic} that
the Lie algebra of $H$ (up to orbit equivalence) must be of the form
$\g{q}\oplus\g{b}\oplus\g{w}\oplus\g{g}_{2\alpha}$, with
$\g{q}\subset\g{k}_0$, $\g{b}\subset\g{a}$, and
$\g{w}\subset\g{g}_\alpha$, or of the form $\g{q}\oplus\g{a}$, with
$\g{q}\subset\g{k}_0$. A bit more work leads us to the examples
described in Theorem~\ref{th:with_a_general}. Combining the different
cases, we will conclude in Section~\ref{sec:proof} the proofs of
Theorems~A and~B.


\section{Actions leaving a totally geodesic subspace invariant}\label{sec:totally_geodesic}

The results in this section show that in order to classify polar actions
leaving a totally geodesic complex hyperbolic subspace invariant it suffices to
study polar actions on the complex hyperbolic spaces of lower dimensions. We
will also show that actions leaving a totally geodesic $\R H^n$ invariant are orbit equivalent to the cohomogeneity one action of $SO^0(1,n)$, the connected component of the identity of $SO(1,n)$. Note that if an isometric action leaves a totally geodesic $\R H^k$ invariant, it also leaves a totally geodesic $\C H^k$ invariant.

The following is well known. Let $H$ be closed connected subgroup of~$SU(1,n)$.
If the natural action of~$H$ on $\C H^n$ leaves a totally geodesic proper
submanifold of~$\C H^n$ invariant, then there is an element $g \in SU(1,n)$
such that $gHg^{-1}$ is contained in one of the subgroups $S(U(1,k)  U(n-k))$
or $SO^0(1,n)$ of~$SU(1,n)$.

\subsection{Actions leaving a totally geodesic complex hyperbolic space
invariant}\label{subsec:CHnInvariant}

Let $L = S(U(1,k)  U(n-k)) \subset G= SU(1,n)$, $k\in\{1,\dots,n-1\}$. Let $M_1$ be the totally
geodesic $\C H^k$ given by the orbit $L \cdot o$. Let $M_2$ be the totally
geodesic $\C H^{n-k}$ which is the image of the normal space $\nu_o M_1$ under
the Riemannian exponential map $\exp_o$. Let $H$ be a closed connected subgroup
of~$L$. Then the $H$-action on~$\C H^n$ leaves $M_1$ invariant and the
$H$-action on~$\C H^n$ restricted to the isotropy subgroup $H_o$ leaves $M_2$
invariant. Let $\pi_1 \colon L \to U(1,k)$ and $\pi_2 \colon L \to U(n-k)$ be
the natural projections.

\begin{theorem}\label{theorem:ProdAct}
Assume the $H$-action on~$\C H^n$ is nontrivial. Then it is polar if
and only if the following hold:
\begin{enumerate}[{\rm (i)}]

  \item The action of~$H$ on~$M_1$ is polar and nontrivial.

  \item The action of $H_o$ on~$M_2$ is polar and nontrivial.

  \item The action of $\pi_1(H) \times \pi_2(H_o)$ on~$\C H^n$ is
      orbit equivalent to the $H$-action.

\end{enumerate}
\end{theorem}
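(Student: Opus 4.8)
The plan is to apply the polarity criterion of Proposition~\ref{prop:criterion1} and exploit the product structure $T_o\C H^n = T_oM_1 \oplus T_oM_2$, which is $H_o$-invariant and on which $H$ and $H_o$ act through the two factors $\pi_1(H)$ and $\pi_2(H_o)$. For the forward direction, assume the $H$-action is polar with section $\Sigma$; by Proposition~\ref{prop:realSection} we may take $\Sigma$ totally real and totally geodesic through $o$, with $T_o\Sigma$ a section of the slice representation of $H_o$ on $\nu_o(H\cdot o)$. First I would decompose $T_o\Sigma$ along $T_oM_1 \oplus T_oM_2$; since $M_1 = L\cdot o$ contains the orbit $H\cdot o$, the tangent space $T_o(H\cdot o)$ lies in $T_oM_1$, so $\nu_o(H\cdot o) = (T_oM_1 \ominus T_o(H\cdot o)) \oplus T_oM_2$, and the slice representation respects this splitting (the $M_1$-part being the slice representation of $H_o$ acting on $\nu_o^{M_1}(H\cdot o)$ via $\pi_1$, the $M_2$-part being the $\pi_2(H_o)$-representation on $T_oM_2$). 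A section of a direct sum of orthogonal subrepresentations decomposes accordingly, so $T_o\Sigma = \Sigma_1 \oplus \Sigma_2$ with $\Sigma_i \subset T_oM_i$, and $\Sigma_i$ is a section of the respective representation. Exponentiating, $\exp_o(\Sigma_1)$ is a totally geodesic submanifold of $M_1$ meeting all $H$-orbits in $M_1$ orthogonally (using that $M_1$ is totally geodesic, so orthogonality in $\C H^n$ restricts correctly), which together with the bracket condition $\langle\g{h}, T_o\Sigma \oplus [T_o\Sigma,T_o\Sigma]\rangle = 0$ — restricted to the subalgebra acting on $M_1$ — gives (i) via Proposition~\ref{prop:criterion1} applied inside $M_1$; similarly $\exp_o(\Sigma_2)$ yields polarity of the $H_o$-action on $M_2$, i.e.\ (ii). Condition (iii) then follows because the orbits of $H$ and of $\pi_1(H)\times\pi_2(H_o)$ through any point can be compared using the simply-transitive $AN$-description and the fact that, modulo the isotropy, $H$ acts on the two totally geodesic factors precisely through $\pi_1(H)$ and $\pi_2(H_o)$; one checks the orbits coincide by a dimension count plus the observation that $\nu_o(H\cdot o) = \nu_o((\pi_1(H)\times\pi_2(H_o))\cdot o)$ and both actions have the same slice representation.

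For the converse, assume (i), (ii), (iii). By (iii) it suffices to prove that the $\pi_1(H)\times\pi_2(H_o)$-action is polar, since orbit equivalence preserves polarity. I would build a candidate section as $\Sigma = \exp_o(\Sigma_1 \oplus \Sigma_2)$, where $\exp_o(\Sigma_i)$ is a section (through $o$, totally geodesic, by Proposition~\ref{prop:realSection} totally real) of the polar $H$-action on $M_1$ (resp.\ $H_o$-action on $M_2$) supplied by (i) and (ii). Then $T_o\Sigma = \Sigma_1 \oplus \Sigma_2$ is a section of the slice representation of the product isotropy group on $\nu_o = \nu_o^{M_1}(H\cdot o) \oplus T_oM_2$, because that slice representation is the direct sum of the two individual ones and each $\Sigma_i$ is a section of its summand (polar representations that split as orthogonal direct sums have sections that are direct sums of sections — this is a standard fact about polar representations, following from Dadok's classification or directly). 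It remains to verify the bracket condition $\langle \g{h}', T_o\Sigma \oplus [T_o\Sigma, T_o\Sigma]\rangle = 0$, where $\g{h}'$ is the Lie algebra of $\pi_1(H)\times\pi_2(H_o)$. Here $[\Sigma_1,\Sigma_2] = 0$ because $[T_oM_1, T_oM_2] \subset \g{k}$ is orthogonal to the relevant parts, or more precisely the cross-brackets land in the isotropy and the product structure forces the pairing to vanish; the within-factor brackets $[\Sigma_i,\Sigma_i]$ are controlled by the corresponding bracket conditions for the $H$-action on $M_1$ and the $H_o$-action on $M_2$ that we get for free from (i), (ii) via Proposition~\ref{prop:criterion1}. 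Assembling these, Proposition~\ref{prop:criterion1} yields polarity of the product action, hence of the $H$-action.

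The main obstacle I anticipate is the careful bookkeeping in condition (iii) and, within the converse, showing that the bracket condition for the product action really does decouple into the two factor conditions — the subtlety being that $T_oM_1$ and $T_oM_2$ are complex-orthogonal but the brackets $[T_oM_1, T_oM_1]$, $[T_oM_2, T_oM_2]$, and $[T_oM_1, T_oM_2]$ all live in $\g{k} = \g{s}(\g{u}(1,k)\oplus\g{u}(n-k))$ where the two factors $\g{u}(1,k)$ and $\g{u}(n-k)$ are \emph{not} quite independent (they share the determinant-one constraint). One must check that this shared one-dimensional piece does not interfere — concretely, that $\langle \g{h}', [\Sigma_i, \Sigma_j]\rangle$ vanishes also for the mixed indices, which should follow because $[T_oM_i, T_oM_j]$ for $i\neq j$ projects trivially to the center and because $\Sigma_i \perp J\Sigma_i$ (total reality) kills the intra-factor contributions to the center as well. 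A secondary technical point is ensuring that "polar with totally real section" is inherited correctly under restriction to the totally geodesic $M_i$, but this is precisely what Proposition~\ref{prop:criterion1} (applied with $M = M_i$, a symmetric space of noncompact type in its own right) and Proposition~\ref{prop:realSection} are designed to handle.
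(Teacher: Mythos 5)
Your proposal is correct and follows essentially the same route as the paper: decompose everything along $T_oM_1\oplus T_oM_2$, use Dadok's theorem to split the section of the slice representation as $T_o\Sigma_1\oplus T_o\Sigma_2$, and verify the bracket condition of Proposition~\ref{prop:criterion1} separately for $[\Sigma_1,\Sigma_1]$, $[\Sigma_2,\Sigma_2]$ and the mixed bracket (which is not zero, as you first write, but lies in the orthogonal complement of $\g{l}=\g{s}(\g{u}(1,k)\oplus\g{u}(n-k))$ and hence is orthogonal to $\g{h}$ --- exactly the correction you then supply yourself). The only genuine divergence is in the forward direction for (i): the paper shows directly, by a Cartan--Hadamard geodesic argument, that the component of $\Sigma\cap M_1$ through $o$ meets every $H$-orbit in $M_1$, whereas you re-apply the polarity criterion inside the totally geodesic $M_1$; both arguments work.
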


\begin{proof}
Assume first that the $H$-action on~$\C H^n$ is polar and $\Sigma$ is
a section through $o$. Let $\Sigma_i$ be the connected component of $\Sigma \cap
M_i$ containing~$o$ for $i=1,2$. Obviously, the $H$-orbits on $M_1$
intersect $\Sigma_1$ orthogonally. Let $p$ be an arbitrary point
in~$M_1$. Then the intersection of the orbit $H \cdot p$
with~$\Sigma$ is non-empty. Let $q \in (H \cdot p) \cap \Sigma$.
Since $H$ leaves~$M_1$ invariant, we have that $q \in M_1$. Both the
Riemannian exponential maps of~$M_1$ and of~$\Sigma$ at the point~$o$
are diffeomorphisms by the Cartan-Hadamard theorem. Hence there is a
unique shortest geodesic segment~$\beta$ in~$\Sigma$ connecting $o$
with~$q$ and there is also a unique shortest geodesic
segment~$\gamma$ in~$M_1$ connecting $o$ with~$q$. Since both
$\Sigma$ and $M_1$ are totally geodesic submanifolds of~$\C H^n$ it
follows that $\beta$ and $\gamma$ are both geodesic
segments of~$\C H^n$ connecting the points $o$ and~$q$ and must
coincide by the Cartan-Hadamard theorem. Hence $\beta = \gamma$ both
lie in $\Sigma_1$. This shows that $\Sigma_1$ meets the $H$-orbit
through~$p$ (namely, at the point~$q$) and that the action of $H$
on~$M_1$ is polar.

Obviously, the $H_o$-orbits on~$M_2$ intersect $\Sigma_2$ orthogonally. Since
$T_oM_2$ is a submodule of the slice representation of~$H_o$ on~$\nu_o (H \cdot
o)$, the linear $H_o$-action on $T_oM_2$ is polar with section~$T_o\Sigma_2$.
The map $\exp_o \colon T_oM_2 \to M_2$ is an $H_o$-equivariant diffeomorphism
by the Cartan-Hadamard theorem. In particular, it follows that $\Sigma_2$ meets
all $H_o$-orbits in~$M_2$, since $T_o\Sigma_2$ meets all $H_o$-orbits in
$T_oM_2$. Thus the action of $H_o$ on~$M_2$ is polar.

Consider the polar slice representation of $H_o$ at $T_o\C H^n$ with
section~$T_o\Sigma$. By \cite[Theorem~4]{Da85}, it follows that
$T_o\Sigma = T_o\Sigma_1 \oplus T_o\Sigma_2$. Note that by Proposition~\ref{prop:realSection}, $T_o\Sigma$ is totally real, which implies that the actions of $H$ on $M_1$ and of $H_o$ on $M_2$ are nontrivial. We have shown that (i) and (ii) hold.

Let now $\g{g}=\g{k}\oplus\g{p}$ be the Cartan decomposition of $\g{g}=\g{su}(1,n)$ with respect to the point~$o$. We know that~$\g{p}$ can be identified with the tangent space $T_o\C H^n$. We consider the following Lie algebras: $\g{h}$ the Lie algebra of $H$, $\hat{\g{h}}=\pi_1(\g{h})\oplus\pi_2(\g{h}_o)$, and $\tilde{\g{h}}=\pi_1(\g{h})\oplus\pi_2(\g{h})$, where we have also denoted by $\pi_i$, $i=1,2$, the projections onto $\g{u}(1,k)$ and $\g{u}(n-k)$, respectively. Note that $\g{h}_o=\g{h}\cap\g{k}$. In what follows we show that the connected subgroups $H$, $\hat{H}$ and $\tilde{H}$ of $U(1,k)\times  U(n-k)$ whose Lie algebras are $\g{h}$, $\hat{\g{h}}$ and $\tilde{\g{h}}$ respectively, have the same orbits in $\C H^n$.

First observe that $\tilde{H}\cdot o=\hat{H}\cdot o=H\cdot o$, since $\pi_2(H)\subset K$.

\begin{claim*} 
For any regular vector $\xi\in T_o\Sigma$ of the slice representation of $H$ at $o$, we have $[\tilde{\g{h}}_o,\xi]=[\hat{\g{h}}_o,\xi]=[\g{h}_o,\xi]$, where $\tilde{\g{h}}_o=\tilde{\g{h}}\cap\g{k}$ and $\hat{\g{h}}_o=\hat{\g{h}}\cap\g{k}$.
\end{claim*}

Since the orbits through $o$ of the groups $H$, $\tilde{H}$ and $\hat{H}$ are the same, their isotropy groups at $o$ stabilize $\nu_o(H\cdot o)$. This already implies $[\tilde{\g{h}}_o,\xi]\subset\nu_o(H\cdot o)$, $[\hat{\g{h}}_o,\xi]\subset\nu_o(H\cdot o)$ and $[\g{h}_o,\xi]\subset\nu_o(H\cdot o)$.

Note that $[\g{h}_o,\xi]$ is precisely the tangent space of the orbit through $\xi$ of the slice representation of $H$ at $o$. Since $\xi$ is regular (that is, it lies in a principal orbit of the slice representation of $H$ at $o$), it follows that $[\g{h}_o,\xi]=\nu_o(H\cdot o)\ominus T_o\Sigma$.

Let $X\in\tilde{\g{h}}_o$ and $\eta\in T_o\Sigma$ be arbitrary. We have to show that $\langle [X,\xi],\eta\rangle=0$. We write $X=X_1+X_2$, with $X_i\in\pi_i(\g{h})$ and also $\xi=\xi_1+\xi_2$, $\eta=\eta_1+\eta_2$, where $\xi_i,\eta_i\in T_o\Sigma_i$. Note that $X$, $X_2\in\g{k}$, so $X_1\in\g{k}$ and thus $[X_1,\xi_2]\in\R i\xi_2$; since sections are totally real by Proposition~\ref{prop:realSection}, we get $\langle[X_1,\xi_2],\eta_2\rangle=0$. Then 
$\langle[X,\xi],\eta\rangle=\langle[X_1,\xi_1],\eta_1\rangle
+\langle[X_2,\xi_2],\eta_2\rangle$.

First, $X_1\in\pi_1(\g{h})$, so there exists $Y_2\in\g{u}(n-k)$ such that $X_1+Y_2\in\g{h}$. Hence, using Proposition~\ref{prop:criterion2} and the fact that $\xi_1\in\g{p}$, we get
\[
\langle[X_1,\xi_1],\eta_1\rangle=\langle[X_1+Y_2,\xi_1],\eta_1\rangle
=-\langle X_1+Y_2,[\xi_1,\eta_1]\rangle=0.
\]

Similarly, $X_2\in\pi_2(\g{h})$, so there exists $Y_1\in\g{u}(1,k)$ such that $Y_1+X_2\in\g{h}$. Again, by Proposition~\ref{prop:criterion2} and the fact that $\xi_2\in\g{p}$ we get
\[
\langle[X_2,\xi_2],\eta_2\rangle=\langle[Y_1+X_2,\xi_2],\eta_2\rangle
=-\langle Y_1+X_2,[\xi_2,\eta_2]\rangle=0.
\]

Altogether this implies $[\tilde{\g{h}}_o,\xi]\subset\nu_o(H\cdot o)\ominus T_o\Sigma$, and since $\g{h}_o\subset\tilde{\g{h}}_o$ the equality follows.

Now we show $[\hat{\g{h}}_o,\xi]=[\g{h}_o,\xi]$. We can use the same argument as above to prove that $[\hat{\g{h}}_o,\xi]\subset\nu_o(H\cdot o)\ominus T_o\Sigma$. The equality will follow after proving that $\g{h}_o\subset\hat{\g{h}}_o$. Hence take $X\in\g{h}_o=\g{h}\cap\g{k}$. Since $\g{h}\subset\g{u}(1,k)\oplus\g{u}(n-k)$, there exist $X_1\in\g{u}(1,k)$ and $X_2\in\g{u}(n-k)$ such that $X=X_1+X_2$. Since $\g{u}(n-k)\subset\g{k}$ this implies that $X_1\in\g{k}$. Thus, $X_2\in\pi_2(\g{h}_o)$ and $X_1\in\pi_1(\g{h}_o)\subset\pi_1(\g{h})$ as we wanted to prove. This completes the proof of the claim.

\medskip

Now note that the $\hat{H}$-action is proper. Indeed, since $U(n-k)$ is compact, $\pi_1(H)$ is closed. As $H_o$ is compact, $\pi_2(H_o)$ is also closed. Therefore, $\hat{H}$ is a closed subgroup of the isometry group of $\C H^n$, and hence its action on $\C H^n$ is proper.

The claim above implies that the tangent spaces at $\xi$ of the orbits of the slice representations of $H$ and $\tilde{H}$ at $o$ coincide for any $H$-regular vector $\xi\in T_o\Sigma$. Since $H\subset\tilde{H}$ we conclude that the principal orbits of the slice representation of $H$ at $o$ are orbits of the slice representation of $\tilde{H}$ at $o$. Similarly, the claim also shows that the principal orbits of the slice representation of $\hat{H}$ at $o$ are orbits of the slice representation of $\tilde{H}$ at~$o$. Now, if we replace $\tilde{H}=\pi_1(H)\times \pi_2(H)$ by its closure in $U(1,k)\times U(n-k)$, the closed orbits of its slice representation at $o$ remain unchanged, in particular, those which agree with the principal orbits of the slice representations at $o$ of $H$ and $\hat{H}$. Thus, we can apply Lemma~\ref{lemma:sameslice} separately to $H$ and the closure of $\tilde{H}$, and to $\hat{H}$ and the closure of $\tilde{H}$. We conclude in particular that the actions on $\C H^n$ of the closed subgroups $H$ and $\hat{H}$ of $U(1,n)$ have the same orbits. Altogether this proves (iii).

\medskip

Now let us prove the other direction of the equivalence in Theorem~\ref{theorem:ProdAct}. Assume $H
\subset L$ is a closed subgroup such that (i), (ii) and (iii) hold.
Because of~(iii) we may replace $H$ by $\pi_1(H) \times \pi_2(H_o)$.
Let $\Sigma_1$ be the section of the $H$-action on~$M_1$ and let
$\Sigma_2$ be the section of the $H_o$-action on~$M_2$. Since both actions are nontrivial, by
Proposition~\ref{prop:realSection} the tangent spaces $T_o\Sigma_1$
and $T_o\Sigma_2$ are totally real subspaces of $T_o\C H^n$;
moreover, $\C T_o\Sigma_1 \perp \C T_o\Sigma_2$. Thus the sum
$T_o\Sigma_1 \oplus T_o\Sigma_2$ is a totally real Lie triple system in
$T_o\C H^n$. Let $\Sigma$ be the corresponding totally geodesic
submanifold.

Using Proposition~\ref{prop:criterion1}, we will show that the $H$-action
on~$\C H^n$ is polar and $\Sigma$ is a section. Consider the Cartan
decomposition $\g{g} = \g{k} \oplus \g{p}$ with respect to $o\in \C H^n$. We
have $\g{p} = T_o M_1 \oplus T_o M_2$. Furthermore, the direct sum
decomposition
\begin{equation}\label{EqnSplitNSpace}
\nu_o (H \cdot o) = (\nu_o (H \cdot o) \cap T_oM_1) \oplus T_oM_2
\end{equation}
holds. The slice representation of the $H$-action on~$M_1$ at the point~$o$ is
orbit equivalent to the submodule $\nu_o (H \cdot o) \cap T_oM_1$ of the slice
representation of the $H$-action on~$\C H^n$ at~$o$. The slice representation
of the $H_o$-action on~$M_2$ at the point~$o$ is orbit equivalent to the
submodule $T_oM_2$ of the slice representation of the $H$-action on~$\C H^n$
at~$o$. By \cite[Theorem~4]{Da85}, we conclude that the slice representation of
$H_o$ on~$\nu_o(H \cdot o)$ is polar and a section is $T_o \Sigma = T_o\Sigma_1
\oplus T_o\Sigma_2$. We have to show $\langle [v,w], X \rangle =
-B([v,w],\theta(X)) = 0$ for all $v$, $w \in T_o\Sigma \subset \g{p}$ and all
$X \in \g{h}$. We may identify the tangent space $T_o\C H^n = \g{p}$ with the
space of complex $(n+1) \times (n+1)$-matrices of the form
\begin{equation}\label{EqnTangentSpace}
\left(
  \begin{array}{c|ccc}
    0 & \bar z_1 & \dots & \bar z_n \\ \hline
    z_1 & 0 & \dots & 0 \\
    \vdots & \vdots & & \vdots \\
    z_n & 0 & \dots & 0 \\
  \end{array}
\right).
\end{equation}
The subspace $T_oM_1$ is given by the matrices where $z_{k+1} =
\ldots = z_n = 0$. On the other hand, $T_oM_2$ consists of those
matrices where $z_1 = \ldots = z_k = 0$. Let $v,w \in T_o\Sigma_1$.
Then $[v,w]$ is a matrix all of whose nonzero entries are located in
the $(k+1) \times (k+1)$-submatrix in the upper left-hand corner, and
it follows from~(i) and Proposition~\ref{prop:criterion1} that all
vectors in $\g{h}$ are orthogonal to $[v,w]$. Now assume $v,w \in
T_o\Sigma_2$. Then $[v,w]$ is a matrix all of whose nonzero entries
are located in the $(n-k) \times (n-k)$-submatrix in the bottom
right-hand corner. It follows from~(ii) and
Proposition~\ref{prop:criterion1} that all vectors in $\g{h}$ are
orthogonal to $[v,w]$. Finally assume $v \in T_o\Sigma_1$ and $w \in
T_o\Sigma_2$. In this case, the bracket $[v,w]$ is contained in the
orthogonal complement of the Lie algebra of~$L$ in $\g{su}(1,n)$; in
particular, $[v,w]$ is orthogonal to~$\g{h}$. We conclude that the
$H$-action on $\C H^n$ is polar by Proposition~\ref{prop:criterion1}.
\end{proof}

\subsection{Actions leaving a totally geodesic real hyperbolic space invariant}
\label{subsec:RHnInvariant}

Now we assume that the polar action leaves a totally geodesic $\R
H^n$ invariant. We have:

\begin{theorem}\label{th:SubSO}
Assume that $H$ is a closed subgroup of $SO^0(1,n) \subset SU(1,n)$. If
the $H$-action on~$\C H^n$ is polar and nontrivial, then it is orbit
equivalent to the $SO^0(1,n)$-action on $\C H^n$; in particular, it is
of cohomogeneity one.
\end{theorem}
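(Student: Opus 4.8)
The plan is to exploit the rigidity of subgroups of $SO(1,n)$ acting on $\R H^n$, which is essentially governed by Wu's classification of polar actions on real hyperbolic spaces, together with the polarity criterion of Proposition~\ref{prop:criterion1} applied inside $\C H^n$. First I would observe that since $H \subset SO(1,n)$, the orbit $H\cdot o$ is contained in the totally geodesic $\R H^n = SO(1,n)\cdot o$, and the normal space $\nu_o(H\cdot o)$ splits as an orthogonal direct sum of a piece inside $T_o\R H^n$ and the full complementary totally real space $i\, T_o\R H^n$ (using that $J$ interchanges $T_o\R H^n$ with its orthogonal complement in $T_o\C H^n$). The slice representation of $H_o$ on $\nu_o(H\cdot o)$ therefore decomposes accordingly, with $H_o$ acting on the $i\,T_o\R H^n$ factor as a copy of its action on $T_o\R H^n$ (via the complex structure), and acting trivially or polarly on the remaining factor inside $T_o\R H^n$.

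Next I would invoke Proposition~\ref{prop:criterion1}: polarity of the $H$-action on $\C H^n$ forces the slice representation of $H_o$ to be polar and, crucially, forces the condition $\langle \g{h}, T_o\Sigma \oplus [T_o\Sigma, T_o\Sigma]\rangle = 0$ for a section $\Sigma$. By Proposition~\ref{prop:realSection} the section is totally real, and by the argument above a natural candidate section is $\Sigma = \exp_o(i\,T_o\R H^n \oplus \g{s})$ where $\g{s}$ is a section for the induced polar action of $H_o$ inside $T_o\R H^n$. The bracket condition, combined with the fact that $[i\,T_o\R H^n, i\,T_o\R H^n]$ lands back in the Lie algebra $\g{so}(1,n)$ while vectors in $\g{h}$ must be orthogonal to these brackets, should force $\g{h}$ to be "small" — concretely, I expect to deduce that $H\cdot o$ is a single point, i.e.\ $H \subset H_o$, so that the whole action is determined by the slice representation. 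Then the polar slice representation of $H_o$ on $\nu_o = T_o\R H^n \oplus i\,T_o\R H^n$ restricts to a polar representation of $H_o$ on $T_o\R H^n$ whose section is all of $T_o\R H^n$ (since the section must pair up with the totally real factor to fill $\nu_o$, forcing cohomogeneity one on $\R H^n$); hence $H_o$ acts transitively on the unit sphere of $T_o\R H^n$, so $H_o \supset SO(n)$ up to orbit equivalence, and the $H$-action is orbit equivalent to that of $SO(n) = SO(1,n)_o$, whose orbits are the tubes around $\{o\}$...

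Here I would be more careful: the orbits of $SO(1,n)$ on $\C H^n$ are the distance tubes around the totally geodesic $\R H^n$, and cohomogeneity one; so the precise statement is that $H$ fixes a point $p$ (after conjugation, $p = o$) and $H_o$ acts on $\nu_o(H\cdot o) = T_o\C H^n$ with the same orbits as $SO(n)$, equivalently $H$ acts with the same orbits as $SO(1,n)$. The alternative case, where $H\cdot o$ is positive-dimensional, I would rule out by a dimension count using conditions (a)--(c) of Proposition~\ref{prop:criterion1}: $\Ad(H_o)T_o\Sigma = \nu_o(H\cdot o)$ with $T_o\Sigma$ totally real and containing the totally real factor $i\,T_o(H\cdot o)^{\perp\,\R}$ forces the geometry to collapse to the fixed-point situation, or alternatively one invokes Wu's theorem directly to say the $H$-action on the invariant $\R H^n$ is polar and hence (being a subaction of the cohomogeneity-one $SO(1,n)$-action, up to the compact-factor structure Wu describes) is orbit equivalent to it.

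The main obstacle I anticipate is the careful bookkeeping of \emph{which} totally real section $\Sigma$ actually occurs and proving it must be the full $T_o\R H^n$-complement rather than a proper subspace: a priori $H_o$ could act with higher cohomogeneity on $T_o\R H^n$, giving a larger section, but then the pairing $T_o\Sigma \oplus [\g{h}_o,\xi] = \nu_o(H\cdot o)$ from Proposition~\ref{prop:criterion1}(a) over-determines the dimensions. Resolving this cleanly — showing that the only way the polarity conditions inside $\C H^n$ can be satisfied by a subgroup of $SO(1,n)$ is the cohomogeneity-one $SO(1,n)$-action itself, up to orbit equivalence — is the crux; everything else is the routine translation between $\g{p}$, the root-space picture of \S\ref{subsec:CHn}, and Lemma~\ref{lemma:equivariance}.
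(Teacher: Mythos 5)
Your proposal correctly identifies the key mechanism --- the slice representation of $H_o$ contains the two equivalent submodules $V$ and $iV$, where $V=\nu_o(H\cdot o)\cap T_o\R H^n$, and two equivalent nontrivial modules in a polar slice representation are forbidden --- but you then draw the conclusion in exactly the wrong direction. This doubling argument forces $V=0$ (since a totally real section cannot contain both $V$ and $iV$ unless $V=0$), i.e.\ it forces $H$ to act \emph{transitively} on the invariant totally geodesic $\R H^n$, not to collapse to a fixed point. The fixed-point scenario you aim for is in fact impossible for a nontrivial polar action: if $H$ fixed $o$, then $H\subset SO(n)$ would act diagonally on $T_o\C H^n=\R^n\oplus i\,\R^n$ with two equivalent submodules, hence trivially. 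Relatedly, your identification of the model is off: the orbits of $SO(1,n)$ on $\C H^n$ are the distance tubes around the totally geodesic $\R H^n$ (a cohomogeneity one action), whereas the diagonal action of $SO(n)=SO(1,n)_o$ on $\C^n$ has cohomogeneity three and is not polar for $n\geq 2$; so ``orbit equivalent to $SO(1,n)_o$'' cannot be the right target. The paper's argument proceeds in three steps: first, the $V\oplus iV$ obstruction rules out singular orbits of $H$ on $\R H^n$, so $H$ induces a homogeneous foliation there; second, the totally real constraint on the section forces $V=0$, i.e.\ transitivity on $\R H^n$ (or triviality); third --- and this step is entirely absent from your proposal --- one shows the cohomogeneity is one by exhibiting a solvable subgroup of $H$ acting transitively on $\R H^n$ contained in a maximally noncompact Borel subgroup, deducing $\g{a}\oplus\g{g}_\alpha^{\R}\subset\g{h}$, and then computing $[iB,iw]=\tfrac12(1+\theta)W$ to contradict the polarity criterion $\langle\g{h},[T_o\Sigma,T_o\Sigma]\rangle=0$ whenever the putative section is more than one-dimensional.

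Your fallback of ``invoking Wu's theorem directly'' does not close the gap either: Wu classifies polar actions on $\R H^n$ up to orbit equivalence, but a priori the restriction of your $H$-action to $\R H^n$ need not be polar, and even granting that, Wu's list contains many actions of cohomogeneity greater than one on $\R H^n$; one still needs the complex-structure doubling and the bracket computation above to see that only the transitive case survives and that the resulting action on $\C H^n$ has cohomogeneity one.
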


\begin{proof}
This proof is divided in three steps.

\begin{claim}\label{claim:SubSO}
The group $H$ induces a homogeneous polar foliation on the totally
geodesic submanifold $\R H^n$ given by the $SO^0(1,n)$-orbit
through~$o$.
\end{claim}

Let $M_1$ be the totally geodesic $\R H^n$ given by the
$SO^0(1,n)$-orbit through $o$. Obviously, the $H$-action leaves $M_1$
invariant. Assume the $H$-action on~$M_1$ has a singular orbit $H
\cdot p$, where $p = g(o) \in M_1$. Consider the action of $H'$ on
$\C H^n$, where $H'$ is the conjugate subgroup $H' = gHg^{-1}$
of~$SU(1,n)$. The action of $H'$ is conjugate to the $H$-action on
$\C H^n$, hence polar. We have the splitting~(\ref{EqnSplitNSpace})
for the normal space of the $H'$-orbit through $o$ as in the proof of
Theorem~\ref{theorem:ProdAct}, where in this case $M_2$ is the totally
geodesic $\R H^n$ such that $T_oM_2 = i(T_oM_1)$. Since $o$ is contained in a singular orbit of the $H'$-action on~$M_1$, the slice representation
of $H'_o$ on $V = \nu_o(H' \cdot o) \cap T_oM_1$ is nontrivial. The
space $T_oM_1$ consists of all matrices in~(\ref{EqnTangentSpace})
where the entries $z_1, \ldots, z_n$ are real. Consequently, the
space $iV$ is contained in the normal space $\nu_o(H' \cdot o)$ and
it follows that the slice representation of $H'_o$ with respect to
the $H'$-action on~$\C H^n$ contains the submodule $V \oplus iV$
with two equivalent nontrivial $H'_o$-representations and is hence
non-polar by \cite[Lemma~2.9]{k02}, a contradiction. Hence the
$H$-action on $M_1$ does not have singular orbits, i.e.\ $H$ induces
a homogeneous foliation on~$M_1$.

\begin{claim}\label{claim:OneLeaf}
The homogeneous polar foliation induced on the invariant totally geodesic real
hyperbolic space consists of only one leaf or all the leaves are points.
\end{claim}

Consider the point $o \in M_1$ as in the proof of
Claim~\ref{claim:SubSO}. The tangent space of~$M_1$ at~$o$ splits as
\[
T_oM_1 = T_o(H \cdot o) \oplus (\nu_o(H \cdot o) \cap T_oM_1).
\]
The action of the isotropy group $H_o$ on $T_oM_1$ respects this
splitting. Moreover, the action is trivial on $V = \nu_o(H \cdot o)
\cap T_oM_1$, as this is a submodule of the slice representation
at~$o$, which lies in a principal orbit of the $H$-action on $M_1$.
It follows that the action of $H_o$ on $iV$  is trivial as well and
the only possibly nontrivial submodule of the slice representation
at~$o$ is $iW$, where we define $W = T_o(H \cdot o)$. It follows
that the action of the isotropy group $H_o$ on $iW$ is polar by
Proposition~\ref{prop:criterion1}. Let $\Sigma'$ be a section of this
action. Let $\Sigma$ be a section of the $H$-action on $\C H^n$. Then
we have
\[
T_o\Sigma = V \oplus iV \oplus \Sigma'.
\]
By Proposition~\ref{prop:realSection}, $\Sigma$ is either totally
real or $\Sigma=\C H^n$.  In the first case, $V$ must be $0$, so the
action of $H$ on $M_1$ is transitive. In the second case, the action
of $H$ on $\C H^n$ is trivial.

\begin{claim}
The $H$-action on $\C H^n$ is orbit equivalent to the
$SO^0(1,n)$-action.
\end{claim}

Assume the $H$-action is nontrivial and polar with section~$\Sigma$.
We will use the notation of Subsection~\ref{subsec:CHn}. By
Claim~\ref{claim:OneLeaf}, $H$ acts transitively on $M_1 = \R H^n$.
By Lemma~\ref{lemma:equivariance}, the tangent space $T_o(H\cdot
o)=T_o M_1$ coincides with $\g{a}\oplus(1-\theta)\g{g}_\alpha^\R$,
where $\g{g}_\alpha^\R$ is a totally real subspace of the root space
$\g{g}_\alpha$ satisfying $\C \g{g}_\alpha^\R = \g{g}_\alpha$.
Moreover $\nu_o M_1 = i(T_o M_1)$. The action of the isotropy
subgroup $H_o = H \cap K$ on $\nu_o M_1$ by the slice representation
is polar with section $T_o\Sigma$. Since $iB \in \nu_oM_1$, by
conjugating the section with a suitable element in $H_o$ we can then
assume that $iB\in T_o\Sigma$.

According to~\cite[Proposition~2.2]{BDT10}, the group~$H$ contains a
solvable subgroup~$S$ which acts transitively on $M_1 = \R H^n$.
Since $S$ is solvable, it is contained in a Borel subgroup of
$SO^0(1,n)$. As shown in the proof of~\cite[Proposition~4.2]{BD11b}, we
may assume that the Lie algebra of such a Borel subgroup is maximally
noncompact, i.e.\ its Lie algebra is $\g{t} \oplus\g{a} \oplus
\g{g}_\alpha^\R$, where $\g{t}$ is an abelian subalgebra of~$\g{k}
\cap \g{so}(n)$ such that $\g{t}\oplus\g{a}$ is a Cartan subalgebra
of $\g{so}(1,n)$, see~\cite{mostow}. Note that the Cartan
decomposition of $\g{so}(1,n)$ with respect to the point $o\in M_1=\R
H^n$ is $\g{so}(1,n)=(\g{k}\cap\g{so}(1,n))\oplus\g{p}^\R$, where
$\g{p}^\R=\g{a}\oplus (1-\theta)\g{g}_\alpha^\R\cong T_o M_1$, and
$\g{g}_\alpha^\R$ is the only positive root space of $\g{so}(1,n)$
with respect to the maximal abelian subalgebra $\g{a}$ of $\g{p}^\R$,
for a fixed order in the roots.

Now assume the $H$-action on~$\C H^n$ is not of cohomogeneity one.
Then $T_o \Sigma \subset \nu_o M_1$ is a Lie triple system containing
$iB$ and a nonzero vector $iw$ such that $iB$, $iw \in \g{p}$ are
orthogonal. By Lemma~\ref{lemma:equivariance}, there is a vector
$W\in\g{g}_\alpha^\R$ such that $w=(1-\theta)W$. Then, using
Lemma~\ref{lemma:aux}\eqref{lemma:aux:a}, we have
\[
[iB,iw] = \frac{1}{2}[(1-\theta)Z,(1-\theta)JW]
=\frac{1}{2}(1+\theta)[\theta JW,Z]=\frac{1}{2}(1+\theta)W.
\]
Since $T_o(S\cdot o)=T_oM_1$, it follows that the orthogonal
projection  of the Lie algebra of $S$ onto $\g{p}$ is
$\g{p}^\R=\g{a}\oplus(1-\theta)\g{g}_\alpha^\R$. This implies that
$\g{a}\oplus\g{g}_\alpha^\R$ is contained in the Lie algebra of $S$,
and hence, also in $\g{h}$. But then $W\in \g{h}$ and
\[
\langle [iB,iw], W\rangle=\frac{1}{2}\langle (1+\theta)W, W\rangle
=\frac{1}{2}\langle W, W\rangle\neq 0,
\]
so we have arrived at a contradiction with the criterion for polarity
in Proposition~\ref{prop:criterion1}.
\end{proof}


\section{The parabolic case}\label{sec:parabolic}

As above, let $G=SU(1,n)$ be the identity connected component of the isometry
group of $\C H^n$, and $K=S(U(1)U(n))$ the isotropy group at some point $o$.
Let $\g{g}=\g{k}\oplus\g{p}$ be the Cartan decomposition of the Lie algebra of
$G$ with respect to $o$, and choose a maximal abelian subspace $\g{a}$ of
$\g{p}$. As usual we consider $\g{n}=\g{g}_\alpha\oplus\g{g}_{2\alpha}$, where
$\alpha$ is a simple positive restricted root. Recall from Subsection~\ref{subsec:CHn} that $\g{k}_0$ normalizes both $\g{a}$ and $\g{n}$. Then $\g{k}_0\oplus\g{a}\oplus\g{n}$ is a
maximal parabolic subalgebra, and a maximal parabolic subgroup can be written
as the semi-direct product $K_0 A N$.
%

The aim of this section is to prove the following decomposition theorem.

\begin{theorem}\label{th:Parabolic}
Let $H$ be a connected closed subgroup of $K_0 A N$ acting polarly
and nontrivially on $\C H^n$. Then the action of $H$ is orbit
equivalent to the action of a subgroup of $K_0 A N$ whose Lie algebra
can be written as one of the following:
\begin{enumerate}[{\rm (a)}]
\item $\g{q}\oplus\g{a}$, where $\g{q}$ is a subalgebra
    of $\g{k}_0$.\label{th:Parabolic:qaw}

\item $\g{q}\oplus\g{a}\oplus\g{w}\oplus\g{g}_{2\alpha}$, where
    $\g{w}$ is a subspace of $\g{g}_\alpha$, and
    $\g{q}$ is a subalgebra of
    $\g{k}_0$.\label{th:Parabolic:qawg}

\item $\g{q}\oplus\g{w}\oplus\g{g}_{2\alpha}$, where $\g{w}$ is
    a subspace of $\g{g}_\alpha$, and $\g{q}$ is a
    subalgebra of $\g{k}_0$.\label{th:Parabolic:qwg}
\end{enumerate}
\end{theorem}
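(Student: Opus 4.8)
\textbf{Proof plan for Theorem~\ref{th:Parabolic}.}

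The plan is to analyze the Lie algebra $\g{h}$ of $H$ through its intersections and projections relative to the grading $\g{k}_0 A N$ with $\g{n}=\g{g}_\alpha\oplus\g{g}_{2\alpha}$, and to normalize it step by step using the polarity criterion of Proposition~\ref{prop:criterion1}. First I would decompose $\g{h}$ with respect to the subalgebra $\g{a}\oplus\g{n}$, which is a solvable ideal complement to $\g{k}_0$: write $\g{h}\cap(\g{a}\oplus\g{n})=\g{b}\oplus\g{w}\oplus\g{v}$ where $\g{b}\subseteq\g{a}$, $\g{w}\subseteq\g{g}_\alpha$, and $\g{v}\subseteq\g{g}_{2\alpha}$, after possibly adjusting by inner automorphisms of $AN$. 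Using the Heisenberg bracket $[aB+U+xZ,bB+V+yZ]=-\frac b2U+\frac a2V+(-bx+ay+\frac12\langle JU,V\rangle)Z$ and the fact that $\g{h}$ is a subalgebra, one sees that if $\g{b}\neq0$ then $\g{w}$ is forced to be a complex (i.e.\ $J$-invariant) subspace and $\g{g}_{2\alpha}\subseteq\g{h}$; and the presence of $B\in\g{b}$ together with $\frac12\langle JU,V\rangle Z$ terms pushes $\g{v}=\g{g}_{2\alpha}$ in all the cases where $\g{w}\neq0$.

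The key geometric input is that $H$ acts \emph{polarly}, hence by Proposition~\ref{prop:criterion1} the slice representation of $H_o$ on $\nu_o(H\cdot o)$ is polar and, by Proposition~\ref{prop:realSection}, any section is totally real; in particular the section cannot contain a complex line. The next step is to control the $\g{k}_0$-part: let $\g{q}_0=\g{h}\cap\g{k}_0$ and let $\g{q}$ be the projection of $\g{h}$ onto $\g{k}_0$ along $\g{a}\oplus\g{n}$. A crucial subtlety is that $\g{q}$ need not equal $\g{q}_0$, since elements of $\g{h}$ can have the form $T+X$ with $T\in\g{k}_0$ and $X\in\g{a}\oplus\g{n}$ nonzero. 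The plan is to show that, \emph{up to orbit equivalence}, one can replace $\g{h}$ by a subalgebra for which these "twisted" elements are removed — i.e.\ by $\g{q}\oplus(\g{h}\cap(\g{a}\oplus\g{n}))$ — by exhibiting an explicit orbit-equivalent group; this is where one invokes that conjugation by $AN$ and the orbit-equivalence freedom let us straighten $\g{h}$. Once $\g{h}$ has the split form $\g{q}\oplus\g{b}\oplus\g{w}\oplus\g{v}$ with $\g{q}\subseteq\g{k}_0$, the bracket relations above and the requirement that $[\g{q},\g{w}]\subseteq\g{w}$ (so $\g{q}$ normalizes $\g{w}$) reduce the list of possibilities: if $\g{b}=\g{a}$ and $\g{w}=0$ we land in case~(a); if $\g{b}=\g{a}$ and $\g{w}\neq0$ we are forced to have $\g{g}_{2\alpha}\subseteq\g{h}$, giving case~(b); and if $\g{b}=0$ we similarly get $\g{g}_{2\alpha}\subseteq\g{h}$ whenever $\g{w}\neq0$, yielding case~(c), while the remaining degenerate subcases ($\g{b}=0$, $\g{w}=0$, or $\g{v}=0$ with $\g{w}\neq0$) are shown to be non-polar or to reduce to one of (a)--(c).

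The ruling-out of the leftover subcases is where the polarity criterion does the real work. For instance, if $\g{b}=0$ and $\g{w}$ is a nonzero proper non-complex subspace with $\g{g}_{2\alpha}\not\subseteq\g{h}$, one computes $[(1-\theta)U,(1-\theta)V]$ for $U,V\in\g{w}^\perp$ (or suitable test vectors) using Lemma~\ref{lemma:aux} and shows that $\langle[T_o\Sigma,T_o\Sigma],\g{h}\rangle\neq0$, or that the slice representation acquires two equivalent nontrivial submodules $\g{v}\oplus J\g{v}$ and is non-polar by the argument already used in the proof of Theorem~\ref{th:SubSO} (cf.\ \cite[Lemma~2.9]{k02}); the bracket term $\frac12\langle JU,V\rangle Z$ landing in $\g{g}_{2\alpha}$ is precisely the obstruction, which is why $\g{g}_{2\alpha}$ must be swallowed into $\g{h}$. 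I expect the main obstacle to be exactly the normalization step that replaces a $\g{k}_0$-twisted $\g{h}$ by its "untwisted" orbit-equivalent model: one must produce an honest isometry of $\C H^n$ carrying orbits to orbits, not merely manipulate Lie algebras, and keeping track of which conjugations preserve the parabolic $K_0AN$ and which only preserve orbit equivalence is the delicate bookkeeping. Everything else is a finite case check driven by the Heisenberg bracket and Proposition~\ref{prop:criterion1}.
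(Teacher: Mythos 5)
There is a genuine gap, and it sits exactly where you predicted the difficulty would be. Your normalization is built on the \emph{intersection} $\g{h}\cap(\g{a}\oplus\g{n})$, and you propose to replace $\g{h}$, up to orbit equivalence, by $\g{q}\oplus(\g{h}\cap(\g{a}\oplus\g{n}))$ with $\g{q}$ the projection of $\g{h}$ onto $\g{k}_0$. But the orbit $H\cdot o$ is tangent to the \emph{projection} $\g{h}_{\g{a}\oplus\g{n}}$, not to the intersection, so this replacement in general shrinks the orbit through $o$ and cannot be orbit equivalent: take $\g{h}=\R(T+U)$ with $0\neq T\in\g{k}_0$, $0\neq U\in\g{g}_\alpha$; then $\g{h}\cap(\g{a}\oplus\g{n})=0$ and your model fixes $o$, while $H\cdot o$ is one-dimensional. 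The paper's straightened model is the opposite pairing, $(\g{h}\cap\g{k}_0)\oplus\g{h}_{\g{a}\oplus\g{n}}$ --- intersection on the compact side, projection on the solvable side --- and proving that this is orbit equivalent to $H$ (Proposition~\ref{prop:orbit equivalent}) already requires showing that the full projection $\g{h}_{\g{k}_0}$ normalizes $\g{w}$ and that the enlarged group $\g{h}_{\g{k}_0}\oplus\g{h}_{\g{a}\oplus\g{n}}$ has the same orbits, via a chain of lemmas on regular vectors and slice representations.

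The second gap is the assumption that the $\g{a}\oplus\g{n}$-part can be made graded ``after possibly adjusting by inner automorphisms of $AN$.'' In general $\g{h}_{\g{a}\oplus\g{n}}=\R(B+X+xZ)\oplus\g{w}\oplus\R(Y+Z)$ with $X,Y\in\g{g}_\alpha\ominus\g{w}$. The $xZ$-component and the $X$-component can indeed be removed by conjugating by $\Exp(xZ)$ and $\Exp(2X)$, but the latter works only after one proves, using polarity, the identities $[\g{q},X]=0$, $[T,X]=0$ and $[S_U,X]\in\g{w}$. More seriously, the component $Y$ in $\R(Y+Z)$ cannot be removed by any inner automorphism: the paper shows $Y=0$ by a contradiction with polarity, constructing a self-adjoint operator $F_\xi$ on $\g{w}$ and comparing $\g{h}\cap\Ad(g)\g{k}$ at $g=\Exp(\lambda\xi)$ with the isotropy algebra $\g{q}\cap\ker\ad(\xi)$ computed in Lemma~\ref{lemma:isotropy}. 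This argument is the bulk of the proof and is absent from your plan. Finally, the claim that $\g{b}\neq0$ forces $\g{w}$ to be a complex subspace is false --- it contradicts the statement of the theorem itself, where $\g{w}$ is an arbitrary real subspace of $\g{g}_\alpha$; the Heisenberg bracket only forces $\g{g}_{2\alpha}\subset\g{h}$ when $\g{w}$ is not totally real.
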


Recall that an isometric action on a complete connected Riemannian
manifold by a closed subgroup of its isometry group is a proper
action. In particular, this implies that isotropy groups are compact,
orbits are closed, and the orbit space is Hausdorff. One can then
talk about types of orbits: two orbits have the same type if the
isotropy groups at any given points of these orbits are conjugate.
The set of conjugacy classes of isotropy groups of orbits is a
partially ordered set by inclusion. For a proper action there is
always a maximum orbit type, that is, a type of orbit whose isotropy
groups are contained, up to conjugacy, in the isotropy groups of any
of the other orbits. Orbits belonging to this type are called
principal orbits. They have the maximum possible dimension, and their
union constitutes an open dense subset of the ambient manifold.
Orbits that are not principal but have maximum dimension are called
exceptional. The rest of the orbits are said to be singular. For
proper isometric actions on Hadamard manifolds there is also a
minimum orbit type.

\begin{proposition}\label{prop:MinimumOrbitType}
Let $M$ be a Hadamard manifold and $H$ a closed subgroup of its
isometry group acting on $M$. Then, there is a minimum orbit type,
that is, there is an orbit type whose isotropy groups contain, up to
conjugation in $H$, the isotropy groups of any
other orbits.
\end{proposition}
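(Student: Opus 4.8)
The plan is to exhibit an explicit orbit that realizes the minimum type, namely an orbit of minimal dimension whose isotropy group is as small as possible among those of minimal-dimension orbits. First I would fix a point $p_0\in M$ whose orbit $H\cdot p_0$ has the smallest dimension among all $H$-orbits; such a point exists since orbit dimensions are bounded below by zero. Among all points whose orbits have this minimal dimension $d$, I would then seek one whose isotropy group $H_{p_0}$ is contained, up to conjugation in $H$, in the isotropy group of every other orbit. The key geometric input is that $M$ is a Hadamard manifold: I would use the structure of the isotropy representation (slice representation) together with convexity of the distance function. Concretely, given any other point $q\in M$, consider the orbit $H\cdot q$ and the isotropy group $H_q$. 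Since $H_q$ is compact (the action is proper), it fixes a point in any $H_q$-invariant complete convex subset of $M$; in particular, $H_q$ fixes the unique point realizing the minimum of the (strictly convex, proper) function $x\mapsto\int_{H_q}\mathrm{dist}(x,k\cdot x_0)^2\,dk$ for a suitable reference point, or more directly, $H_q$ fixes the circumcenter of any bounded $H_q$-orbit. This lets me move, via the exponential map and along orbits, toward a point of minimal-dimensional orbit while only enlarging (or fixing) the isotropy group inside $H_q$.

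More precisely, the key steps are as follows. (1) Show that if $H\cdot p$ is an orbit of non-minimal dimension, then arbitrarily close to $p$ there are points with strictly smaller orbit dimension and with isotropy group containing a conjugate of $H_p$: this follows from the slice theorem, since in the slice representation of $H_p$ at $p$ the isotropy group of a nonzero vector $v$ in a nontrivial submodule is a proper subgroup of $H_p$, but conversely moving \emph{into} the fixed point set of $H_p$ keeps the isotropy at least $H_p$; one descends in dimension by following directions in $\nu_p(H\cdot p)^{H_p}$, the fixed subspace, which is nonempty and along which the isotropy does not shrink. (2) Iterate: because orbit dimension is a nonnegative integer, after finitely many steps one reaches a point $p_0$ such that the fixed point set of $H_{p_0}$ in $M$ contains no point of smaller orbit dimension; by (1) this forces $H\cdot p_0$ to have locally minimal dimension, and a connectedness/convexity argument on the totally geodesic submanifold $\mathrm{Fix}(H_{p_0})$ (which is totally geodesic in a Hadamard manifold, hence again Hadamard) upgrades this to globally minimal dimension. (3) Finally, show $H_{p_0}$ is contained up to conjugacy in $H_q$ for every $q$: starting from $q$, run the descent of (1) inside $\mathrm{Fix}(H_q)$ until reaching a minimal-dimension orbit; at that stage the isotropy group contains a conjugate of $H_q$ and equals a conjugate of $H_{p_0}$ by minimality and the rank-one-type rigidity of minimal orbits, so $H_q$ is subconjugate to $H_{p_0}$.

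I expect the main obstacle to be step (3): ensuring that \emph{all} minimal-dimension orbits have conjugate isotropy groups, so that the minimum orbit type is well defined and unique. This is where the Hadamard hypothesis is essential and where I would invest the most care. The argument I have in mind is: if $H\cdot p_0$ and $H\cdot p_0'$ are both of minimal dimension, then $\mathrm{Fix}(H_{p_0})$ and $\mathrm{Fix}(H_{p_0'})$ are totally geodesic, hence convex; using that the descent procedure from \emph{any} point terminates at a minimal-dimension orbit and that the isotropy can only grow during the descent, one shows both $H_{p_0}$ and $H_{p_0'}$ are subconjugate to a common isotropy group reached by descending from a generic point, and then minimality of dimension, combined with the fact that a proper subgroup would cut out a strictly larger fixed set hence a strictly smaller orbit (contradiction), forces equality up to conjugacy. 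The convexity of $M$ is used repeatedly to guarantee that fixed point sets of compact subgroups are nonempty and connected, which is false in general Riemannian manifolds and is precisely why the statement is restricted to Hadamard manifolds.
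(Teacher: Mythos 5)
Your approach does not work as written, and the central difficulty is already in your step (1). You claim that if $H\cdot p$ is an orbit of non-minimal dimension, then arbitrarily close to $p$ there are points with strictly smaller orbit dimension. This is false: by the slice theorem, for $q$ sufficiently close to $p$ the isotropy group $H_q$ is conjugate to a subgroup of $H_p$, so $\dim(H\cdot q)=\dim H-\dim H_q\geq\dim H-\dim H_p=\dim(H\cdot p)$; orbit dimension is locally non-decreasing as one moves away from $p$, never decreasing. (Already for $SO(2)$ rotating $\R^2$, every point near a nonzero $p$ lies on a one-dimensional orbit, while the unique zero-dimensional orbit is the origin, at a definite distance from $p$.) Consequently the descent procedure of steps (1)--(2) has no mechanism for reaching smaller-dimensional orbits, and step (3) rests on an unproved ``rigidity'' assertion that all minimal-dimension orbits have conjugate isotropy groups --- which is essentially the content of the proposition and cannot be invoked. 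Note also that your statement of the goal oscillates between the two inclusion directions; the minimum orbit type is the one whose isotropy groups \emph{contain} all others up to conjugacy.

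The idea you are missing, and which the paper uses, is to work with the group $H$ itself rather than with individual orbits: take a maximal compact subgroup $Q$ of $H$ (any two such are conjugate in $H$). By the Cartan fixed point theorem --- which is where the Hadamard hypothesis enters, via precisely the circumcenter/convexity observation you make, applied to a $Q$-orbit --- $Q$ fixes some point $p$, so $Q\subset H_p$; since $H_p$ is compact (the action is proper) and $Q$ is maximal compact, $Q=H_p$. For any other point $q$, the compact group $H_q$ lies in some maximal compact subgroup of $H$, hence $H_q\subset hQh^{-1}=hH_ph^{-1}$ for some $h\in H$. This gives the minimum orbit type in a few lines, with no descent and no case analysis.
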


\begin{proof}
Let $Q$ be a maximal compact subgroup of $H$. Any two maximal compact
subgroups of a connected Lie group $H$ are connected and conjugate by
an element of $H$ \cite[p.~148--149]{OV94}. By Cartan's fixed point
theorem, $Q$ fixes a point $p\in M$, and hence $Q=H_p$, the isotropy
group of $H$ at $p$. If  $q\in M$, then $H_q$ is compact, and since
all maximal compact subgroups of $H$ are conjugate it follows that
there exists $h\in H$ such that $H_q\subset hQh^{-1}$. Thus, the
orbit through $p$ is of the minimum orbit type.
\end{proof}

Coming back to the problem in $\C H^n$, consider from now on a
connected closed subgroup $H$ of $K_0AN$ acting polarly and
nontrivially on $\C H^n$. Proposition~\ref{prop:MinimumOrbitType} and
its proof assert that there is a maximal compact subgroup $Q$ of $H$
with a fixed point $p\in\C H^n$. The orbit through $p$ is of minimum
type, and $Q=H_p$. Since $AN$ acts simply transitively on $\C H^n$,
we can take the unique element $g$ in $AN$ such that $g(o)=p$, and
consider the group $H'=I_{g^{-1}}(H)=g^{-1}Hg$, whose action on $\C
H^n$ is conjugate to the one of $H$. Moreover,
$Q'=I_{g^{-1}}(Q)=g^{-1}Qg$ fixes the point $o$. Since
$\g{a}\oplus\g{n}$ normalizes $\g{k}_0\oplus\g{a}\oplus\g{n}$, we get
that $AN$ normalizes $\g{k}_0\oplus\g{a}\oplus\g{n}$. In particular,
$\Ad(g^{-1})\g{h}\subset \g{k}_0\oplus\g{a}\oplus\g{n}$, and
therefore $H'\subset K_0 A N$. Since we are interested in the study
of polar actions up to orbit equivalence, it is not restrictive to
assume that the group $H\subset K_0 A N$ acting polarly on $\C H^n$
admits a maximal connected compact subgroup $Q$ that fixes the point
$o$, and hence $Q\subset K_0$. We will assume this throughout this
section.

As a matter of notation, given two subspaces $\g{m}$, $\g{l}$, and a
vector $v$ of $\g{g}$, by $\g{m}_\g{l}$ (resp.\ by $v_\g{l}$) we will
denote the orthogonal projection of $\g{m}$ (resp.\ of $v$) onto
$\g{l}$.

The crucial part of the proof of Theorem~\ref{th:Parabolic} is contained
in the following assertion:

\begin{proposition}\label{prop:orbit equivalent}
Let $H$ be a connected closed subgroup of $K_0 A N$ acting polarly on $\C H^n$.
Let $Q$ be a maximal subgroup of $H$ that fixes the point $o\in\C H^n$. Let
$\g{b}$ be a subspace of $\g{a}$, $\g{w}$ a subspace of $\g{g}_\alpha$, and
$\g{r}$ a subspace of $\g{g}_{2\alpha}$. Assume that
$\hat{\g{h}}=\g{q}\oplus\g{b}\oplus\g{w}\oplus\g{r}$ is a subalgebra of
$\g{k}_0\oplus\g{a}\oplus\g{n}$, and let $\hat{H}$ be the connected subgroup of
$K_0 A N$ whose Lie algebra is $\hat{\g{h}}$. If
$\g{h}_{\g{a}\oplus\g{n}}=\g{b}\oplus\g{w}\oplus\g{r}$, then the actions of $H$
and $\hat{H}$ are orbit equivalent.
\end{proposition}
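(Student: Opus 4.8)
The plan is to prove that the identity map of $\C H^n$ is an orbit equivalence between the $H$- and $\hat H$-actions; since this amounts to the two orbit foliations coinciding, the whole argument reduces to showing $\hat H$ acts polarly with the same section as $H$ and then propagating this to all orbits. First I would record the compatibility of the two groups at $o$. As $Q=H_o$ and $H\subset K_0AN$, one gets $\g{q}=\g{h}\cap\g{k}_0$, and exactly the same computation gives $\hat{\g{h}}\cap\g{k}_0=\g{q}$, so $\hat H_o=Q$ as well. Projecting to $\g{p}$ by $\frac12(1-\theta)$ and using $\theta|_{\g{k}_0}=\id$, the hypothesis $\g{h}_{\g{a}\oplus\g{n}}=\g{b}\oplus\g{w}\oplus\g{r}$ yields $\g{h}\cdot o=\g{b}\oplus\frac12(1-\theta)(\g{w}\oplus\g{r})=\hat{\g{h}}\cdot o$; hence $\nu_o(H\cdot o)=\nu_o(\hat H\cdot o)$ and the slice representations of $Q$ on this common normal space are literally the same.

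Next I would show $\hat H$ is polar with a section which is simultaneously a section of $H$. Take a section $\Sigma$ of $H$ with $o\in\Sigma$ (after conjugating by an element of $Q$); by Proposition~\ref{prop:realSection} it is totally real. Since the $Q$-slice representation is trivial on $\g{a}\ominus\g{b}$ and on $\frac12(1-\theta)(\g{g}_{2\alpha}\ominus\g{r})$, both summands lie in $T_o\Sigma$, so $T_o\Sigma=(\g{a}\ominus\g{b})\oplus\frac12(1-\theta)\g{s}\oplus\frac12(1-\theta)(\g{g}_{2\alpha}\ominus\g{r})$ with $\g{s}\subset\g{w}^\perp=\g{g}_\alpha\ominus\g{w}$ totally real (total reality of $T_o\Sigma$ forces $\g{s}$ totally real). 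Because the $Q$-slice representations of $H$ and $\hat H$ at $o$ agree, $T_o\Sigma$ is again a section of the slice representation of $\hat H_o$. It remains to verify $\langle\hat{\g{h}},T_o\Sigma\oplus[T_o\Sigma,T_o\Sigma]\rangle=0$. Write $\g{h}=\g{q}\oplus\{\psi(v)+v:v\in\g{v}\}$ with $\g{v}=\g{b}\oplus\g{w}\oplus\g{r}$ and $\psi\colon\g{v}\to\g{k}_0$ linear. The $\g{q}$-part costs nothing since $\g{q}\subset\g{h}$; and $\langle v,T_o\Sigma\rangle=\langle\psi(v)+v,T_o\Sigma\rangle=0$ because $\psi(v)\in\g{k}\perp\g{p}\supset T_o\Sigma$, so $\langle\g{v},T_o\Sigma\rangle=0$. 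For the bracket term I would compute $[T_o\Sigma,T_o\Sigma]$ via the bracket relations of \S\ref{subsec:CHn} and Lemma~\ref{lemma:aux}: it lies in $\g{k}$, its $\g{g}_0$-component lies in $\g{k}\cap\g{g}_0=\g{k}_0$ (hence orthogonal to $\g{b}\subset\g{a}$), it has no $\g{g}_{2\alpha}$-component (using $[\g{s},\g{s}]=0$), and its $\g{g}_\alpha$-component is spanned by $\g{s}$ (from $[\g{a}\ominus\g{b},\frac12(1-\theta)\g{s}]$, present only when $\g{b}\neq\g{a}$) together with $J\g{s}$ (from $[\frac12(1-\theta)\g{s},\frac12(1-\theta)(\g{g}_{2\alpha}\ominus\g{r})]$, using $[\theta u,Z]=-Ju$). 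Since $\g{s}\perp\g{w}$, what is missing is $\langle\g{w},J\g{s}\rangle=0$, and here I would invoke the polarity of $H$ itself: pairing $\psi(w)+w\in\g{h}$ with the generator $[\frac12(1-\theta)u,\frac12(1-\theta)Z]$ of $[T_o\Sigma,T_o\Sigma]$, which lies in $\g{g}_\alpha\oplus\g{g}_{-\alpha}$ and is therefore orthogonal to $\psi(w)\in\g{g}_0$, leaves exactly $\langle w,Ju\rangle=0$. By the mutual $\langle\cdot,\cdot\rangle$-orthogonality of root spaces this gives $\langle\g{v},[T_o\Sigma,T_o\Sigma]\rangle=0$, and Proposition~\ref{prop:criterion1} then shows $\hat H$ acts polarly with section $\Sigma$.

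Finally I would upgrade ``both $H$ and $\hat H$ are polar with section $\Sigma$, isotropy $Q$ at $o$, the same orbit tangent space and the same $Q$-slice representation at $o$'' to ``the same orbits everywhere''. Let $S_o=\exp_o(\nu_o(H\cdot o))$ be the common slice at $o$. By the slice theorem a neighbourhood of $o$ is $Q$-equivariantly diffeomorphic to $H\times_Q S_o\cong\hat H\times_Q S_o$ with $Q$ acting the same way on $S_o$, so near $o$ the $H$-orbits and the $\hat H$-orbits coincide. Since $\Sigma$ is connected and, by polarity, meets every orbit of each action, the set of points of $\Sigma$ at which the $H$- and $\hat H$-orbits agree is nonempty, open (apply the slice theorem at such a point, where the isotropy groups and slice representations again coincide because the tangent spaces to the two orbits do) and closed; hence it is all of $\Sigma$, so the orbit foliations of $H$ and $\hat H$ coincide and the two actions are orbit equivalent.

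The main obstacle is the verification in the second step, and specifically the identity $\langle\g{w},J\g{s}\rangle=0$: the twist $\psi\colon\g{v}\to\g{k}_0$ cannot be dropped term by term, because although $\psi(v)$ is automatically orthogonal to $T_o\Sigma$, the individual pairing $\langle\psi(v),[T_o\Sigma,T_o\Sigma]\rangle$ need not vanish. The point that makes it work is structural: the $\g{g}_0$-component of $[T_o\Sigma,T_o\Sigma]$ sits in $\g{k}_0$, which is orthogonal to $\g{b}$, while its $\g{g}_\alpha$-component is produced by brackets that live in $\g{g}_\alpha\oplus\g{g}_{-\alpha}$ and are thus orthogonal to the $\g{k}_0$-valued twist — so the twist disappears from precisely those pairings that are needed, and polarity of $H$ then delivers $\langle\g{w},J\g{s}\rangle=0$. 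A secondary subtlety, required to even write down $[T_o\Sigma,T_o\Sigma]$, is determining the shape of $T_o\Sigma$ from the structure of the $Q$-slice representation (trivial on the $\g{a}$- and $\g{g}_{2\alpha}$-summands, totally real on $\g{w}^\perp$), which is also where one must replace $\Sigma$ by a $Q$-conjugate to arrange $o\in\Sigma$.
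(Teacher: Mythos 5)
Your first two steps are sound: the identification of $Q=\hat H_o$, of the common tangent space $T_o(H\cdot o)=T_o(\hat H\cdot o)$ and of the common slice representation at $o$ is correct, and your direct verification via Proposition~\ref{prop:criterion1} that $\hat H$ acts polarly with the same section $\Sigma$ --- in particular extracting $\langle\g{w},J\g{s}\rangle=0$ from the polarity of $H$ by pairing $\g{h}$ against $[(1-\theta)u,(1-\theta)Z]=(1+\theta)Ju$ --- is a clean computation that the paper does not carry out in this form. The gap is in the final globalization. The claim that ``a neighbourhood of $o$ is diffeomorphic to $H\times_Q S_o\cong\hat H\times_Q S_o$, so near $o$ the orbits coincide'' is a non sequitur: the two tube models are identified with the same neighbourhood of $M$ by means of two \emph{different} group actions, so orbits in one model need not correspond to orbits in the other. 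Concretely, the $H$-orbit of $\exp_o(v)$ meets the normal space $\nu_n(H\cdot o)$ over $n=h(o)$ in $h_*(Q\cdot v)$, whereas the $\hat H$-orbit meets it in $\hat h_*(Q\cdot v)$ for $\hat h\in\hat H$ with $\hat h(o)=n$; these sets agree for all $n$ only if every element of $(\hat H H)\cap K_0$ preserves every $Q$-orbit in $\nu_o(H\cdot o)$, and that is precisely the nontrivial point. A toy example: on $\R^2\times\R^2$ let $H$ act by $a\cdot(x,y)=(x+a,y)$ and $\hat H$ by $a\cdot(x,y)=(x+a,R_{\ell(a)}y)$, where $R_\theta$ is the rotation by $\theta$ and $\ell$ is a nonzero linear form; both groups have the same orbit, trivial isotropy and trivial slice representation at the origin, yet different orbits everywhere else. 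The same objection defeats the openness step of your open--closed argument: ``same orbit, same isotropy and same slice representation at $p$'' does not imply ``same orbits near $p$'', so the set $A$ need not be open.

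This missing step is exactly where the paper invests its effort. It introduces the intermediate group $\tilde H$ with Lie algebra $\g{h}_{\g{k}_0}\oplus\g{b}\oplus\g{w}\oplus\g{r}$, which contains \emph{both} $H$ and $\hat H$; Lemmas~\ref{lemma:normalizes} and~\ref{lemma:normalizes2} (proved via the self-adjoint operator $F_\xi$, an eigenvalue argument, and the isotropy computation of Lemma~\ref{lemma:isotropy}) show that $\tilde{\g{h}}$ really is a subalgebra and that $[\g{h}_{\g{k}_0},\xi]=[\g{q},\xi]$ for regular $\xi\in\g{s}$, so the slice representations of $H$ and $\tilde H$ at $o$ have the same orbits. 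Together with the $H$-equivariant diffeomorphism $\exp\colon\nu(H\cdot o)\to\C H^n$ and a codimension count, the inclusions $H,\hat H\subset\tilde H$ then force equality of orbits. To repair your proof you would need either to reproduce this containment argument or to show directly that every element of $(\hat H H)\cap K_0$ preserves the $Q$-orbits on $\nu_o(H\cdot o)$; as written, the argument does not establish orbit equivalence.
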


The proof of Proposition~\ref{prop:orbit equivalent} is carried out
in several steps. We start with a few basic remarks.

Since $\g{a}$ and $\g{g}_{2\alpha}$ are one dimensional, $\g{b}$ is
either $0$ or $\g{a}$, and $\g{r}$ is either $0$ or
$\g{g}_{2\alpha}$. Moreover, if $\g{r}=0$ then $\g{w}$ has to be a
totally real subspace of the complex vector space
$\g{g}_\alpha\cong\C^{n-1}$, so that $\hat{\g{h}}$ is a Lie
subalgebra. Using the properties of the root space decomposition, it
is then easy to check that
$\hat{\g{h}}=\g{q}\oplus\g{b}\oplus\g{w}\oplus\g{r}$ is a subalgebra
of $\g{k}_0\oplus\g{a}\oplus\g{n}$ if and only if
$[\g{q},\g{w}]\subset\g{w}$.

Let $\Sigma$ be a section of the action of $H$ on $\C H^n$ through
$o\in\C H^n$, and let $T_o \Sigma$ be its tangent space at $o$. The
normal space of the orbit through the origin is $\nu_o(H\cdot
o)=(\g{a}\ominus\g{b})\oplus(\g{p}_\alpha\ominus(1-\theta)\g{w})
\oplus(\g{p}_{2\alpha}\ominus(1-\theta)\g{r})$. Since
$[\g{k}_0,\g{a}]=[\g{k}_0,\g{g}_{2\alpha}]=0$,
$[\g{k}_0,\g{g}_\alpha]=\g{g}_\alpha$, and $\nu_o(H\cdot
o)=T_o\Sigma\oplus[\g{q},T_o\Sigma]$ (orthogonal direct sum of vector
subspaces) by Proposition~\ref{prop:criterion2}, it follows that
$\g{a}\ominus\g{b}\subset T_o\Sigma$ and
$\g{p}_{2\alpha}\ominus(1-\theta)\g{r}\subset T_o\Sigma$. Moreover,
since sections are totally real by
Proposition~\ref{prop:realSection}, we can write the tangent space at
$o$ of any section as
$T_o\Sigma=(\g{a}\ominus\g{b})\oplus(1-\theta)\g{s}
\oplus(\g{p}_{2\alpha}\ominus(1-\theta)\g{r})$, where $\g{s}$ is a
totally real subspace of $\g{g}_\alpha$, with
$\g{s}\subset\g{g}_\alpha\ominus\g{w}$. Furthermore, the fact that
$T_o\Sigma$ is totally real, and $i\g{a}=\g{p}_{2\alpha}$ (where $i$
is the complex structure on $\g{p}$), implies that
$\g{a}\ominus\g{b}=0$ or $\g{p}_{2\alpha}\ominus(1-\theta)\g{r}=0$,
or equivalently, $\g{b}=\g{a}$ or $\g{r}=\g{g}_{2\alpha}$ (that is,
$\g{a}\subset\g{h}_{\g{a}\oplus\g{n}}$ or
$\g{g}_{2\alpha}\subset\g{h}_{\g{a}\oplus\g{n}}$).

Let $T+aB+U+xZ$ be an arbitrary element of $\g{h}$, with
$T\in\g{h}_{\g{k}_0}$, $U\in\g{w}$, and $a,x\in\R$. Let $\xi$, $\eta$
be arbitrary vectors of $\g{s}$. By
Proposition~\ref{prop:criterion2}, and since $\g{s}$ is totally real,
we have, using Lemma~\ref{lemma:aux}(\ref{lemma:aux:b}):
\[
0=\langle T+aB+U+xZ,[(1-\theta)\xi,(1-\theta)\eta]\rangle
=-\langle T,(1+\theta)[\theta \xi,\eta]\rangle=-2\langle [T,\xi],\eta\rangle,
\]
from where it follows that $[\g{h}_{\g{k}_0},\g{s}]\subset \g{g}_\alpha\ominus\g{s}$.

Moreover, if $T\in\g{q}$ and $S_U\in\g{h}_{\g{k}_0}$, $U\in\g{w}$ are
such that $S_U+U\in\g{h}$, then $[T,S_U]+[T,U]=[T,S_U+U]\in\g{h}$, so
$[T,U]\in\g{w}$. In particular, if $\xi\in\g{s}$, then $0=\langle
[T,U],\xi\rangle=-\langle [T,\xi],U\rangle$, which proves
$[\g{q},\g{s}]\subset\g{g}_\alpha\ominus(\g{w}\oplus\g{s})$.

Summarizing what we have obtained about sections we can state:

\begin{lemma}\label{lemma:section1}
If $\Sigma$ is a section of the action of $H$ on $\C H^n$
through $o$, then
\[
T_o\Sigma=(\g{a}\ominus\g{b})\oplus(1-\theta)\g{s}
\oplus(\g{p}_{2\alpha}\ominus(1-\theta)\g{r}),
\]
where $\g{s}\subset\g{g}_\alpha\ominus\g{w}$ is a totally real
subspace of $\g{g}_\alpha$, and $\g{b}=\g{a}$ or
$\g{r}=\g{g}_{2\alpha}$. Moreover, $[\g{h}_{\g{k}_0},\g{s}]\subset
\g{g}_\alpha\ominus\g{s}$, and
$[\g{q},\g{s}]\subset\g{g}_\alpha\ominus(\g{w}\oplus\g{s})$.
\end{lemma}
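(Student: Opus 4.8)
The statement collects three facts about a section $\Sigma$ through $o$ of the polar $H$-action: the precise form of $T_o\Sigma$ as a sum of pieces living in $\g{a}$, $\g{p}_\alpha$ and $\g{p}_{2\alpha}$; a dichotomy $\g{b}=\g{a}$ or $\g{r}=\g{g}_{2\alpha}$; and two bracket orthogonality relations involving $\g{h}_{\g{k}_0}$, $\g{q}$, $\g{s}$ and $\g{w}$. Essentially every ingredient has already been assembled in the paragraphs immediately preceding the lemma, so the plan is simply to organize that discussion into a clean deductive sequence rather than to prove anything new.

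First I would record that, by Proposition~\ref{prop:realSection}, the section is totally real, so $T_o\Sigma$ is a totally real subspace of $T_o\C H^n\cong\g{p}=\g{a}\oplus\g{p}_\alpha\oplus\g{p}_{2\alpha}$; and by the computation of $\nu_o(H\cdot o)$ together with Proposition~\ref{prop:criterion2}(b), namely $\nu_o(H\cdot o)=T_o\Sigma\oplus[\g{q},T_o\Sigma]$, combined with $[\g{k}_0,\g{a}]=[\g{k}_0,\g{g}_{2\alpha}]=0$ and $[\g{k}_0,\g{g}_\alpha]=\g{g}_\alpha$, the $\g{a}$- and $\g{p}_{2\alpha}$-components of $\nu_o(H\cdot o)$ must lie inside $T_o\Sigma$. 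This forces $T_o\Sigma=(\g{a}\ominus\g{b})\oplus(1-\theta)\g{s}\oplus(\g{p}_{2\alpha}\ominus(1-\theta)\g{r})$ for some real subspace $\g{s}\subset\g{g}_\alpha\ominus\g{w}$; that $\g{s}$ is totally real follows again from Proposition~\ref{prop:realSection}. The dichotomy $\g{b}=\g{a}$ or $\g{r}=\g{g}_{2\alpha}$ is then immediate from totally realness: since $i\g{a}=\g{p}_{2\alpha}$ (recall $2iB=(1-\theta)Z$), if both $\g{a}\ominus\g{b}$ and $\g{p}_{2\alpha}\ominus(1-\theta)\g{r}$ were nonzero, $T_o\Sigma$ would contain a complex line, a contradiction.

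For the two bracket relations, I would invoke Proposition~\ref{prop:criterion2}, which gives $\langle\g{h},[T_o\Sigma,T_o\Sigma]\rangle=0$; applying this to $\xi,\eta\in\g{s}$ and using Lemma~\ref{lemma:aux}(b) (and $[\xi,\eta]=\tfrac12\langle J\xi,\eta\rangle Z=0$ since $\g{s}$ is totally real) yields $\langle[T,\xi],\eta\rangle=0$ for all $T\in\g{h}_{\g{k}_0}$, i.e.\ $[\g{h}_{\g{k}_0},\g{s}]\subset\g{g}_\alpha\ominus\g{s}$. For the finer statement $[\g{q},\g{s}]\subset\g{g}_\alpha\ominus(\g{w}\oplus\g{s})$, the orthogonality to $\g{s}$ is already covered, and orthogonality to $\g{w}$ follows because for $T\in\g{q}$ and any $S_U+U\in\g{h}$ with $U\in\g{w}$ one has $[T,U]=[T,S_U+U]-[T,S_U]\in\g{w}$ (using that $\g{q}$ normalizes $\g{w}$ and $[\g{q},\g{h}_{\g{k}_0}]\subset\g{h}_{\g{k}_0}$), so $\langle[T,\xi],U\rangle=-\langle\xi,[T,U]\rangle=0$ for $\xi\in\g{s}$.

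There is no genuine obstacle here; the only point requiring mild care is bookkeeping with the two inner products $\langle\cdot,\cdot\rangle$ and $\langle\cdot,\cdot\rangle_{AN}$ and the identification $T_o\C H^n\cong\g{p}$ via $\tfrac12(1-\theta)$ (Lemma~\ref{lemma:equivariance}), so that the vanishing of the relevant pairings on $\g{p}$ translates faithfully into statements in $\g{a}\oplus\g{g}_\alpha\oplus\g{g}_{2\alpha}$. Once that dictionary is in place, the lemma is just a summary of the preceding paragraph, and I would write it as such.
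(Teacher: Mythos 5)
Your proposal is correct and reproduces the paper's own argument, which is precisely the discussion in the paragraphs preceding the statement of the lemma: the containments $\g{a}\ominus\g{b},\ \g{p}_{2\alpha}\ominus(1-\theta)\g{r}\subset T_o\Sigma$ come from $\nu_o(H\cdot o)=T_o\Sigma\oplus[\g{q},T_o\Sigma]$ together with $[\g{k}_0,\g{a}]=[\g{k}_0,\g{g}_{2\alpha}]=0$, the dichotomy from $i\g{a}=\g{p}_{2\alpha}$ and total realness of the section, and the two bracket relations from $\langle\g{h},[T_o\Sigma,T_o\Sigma]\rangle=0$ combined with Lemma~\ref{lemma:aux}. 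The only cosmetic difference is your justification of $[T,U]\in\g{w}$: the paper obtains it by projecting $[T,S_U+U]\in\g{h}$ onto $\g{g}_\alpha$ and using $\g{h}_{\g{g}_\alpha}=\g{w}$, rather than by citing the normalization $[\g{q},\g{w}]\subset\g{w}$ (which is in any case available here, since $\hat{\g{h}}$ is assumed to be a subalgebra).
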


We will need to calculate the isotropy group at certain points.

\begin{lemma}\label{lemma:isotropy}
Let $\xi\in\g{g}_\alpha$ and write $g=\Exp(\lambda\xi)$, with
$\lambda\in\R$. Then, the Lie algebra of the isotropy group $H_p$ of $H$
at $p=g(o)$ is $\g{h}_p=\g{h}\cap\Ad(g)\g{k}=\g{q}\cap\ker\ad(\xi)$.
\end{lemma}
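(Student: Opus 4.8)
The plan is to compute $\g{h}_p$ directly, identifying the isotropy subalgebra with the subalgebra of $\g{h}$ that is fixed by $\Ad(g)$-conjugation into $\g{k}$. First I would recall that $\g{h}_p = \g{h}\cap\g{k}_p$, where $\g{k}_p = \Ad(g)\g{k}$ is the Lie algebra of the isotropy group $K_p = gKg^{-1}$ at the point $p = g(o)$. So the task reduces to understanding $\g{h}\cap\Ad(g)\g{k}$ for $g=\Exp(\lambda\xi)$ with $\xi\in\g{g}_\alpha$.

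**Conjugation by $\Exp(\lambda\xi)$.** The key is to expand $\Ad(g) = \Ad(\Exp(\lambda\xi)) = e^{\lambda\ad(\xi)}$ using the nilpotency of $\ad(\xi)$ for $\xi\in\g{g}_\alpha$; since $\g{g}_\alpha\oplus\g{g}_{2\alpha}$ is two-step nilpotent and $[\g{g}_\alpha,\g{g}_{2\alpha}]=0$, one has $\ad(\xi)^3=0$, so $\Ad(g) = \id + \lambda\ad(\xi) + \tfrac{\lambda^2}{2}\ad(\xi)^2$ acting on $\g{g}$. Then I would take an arbitrary element $X = T + aB + U + xZ\in\g{h}$ (with $T\in\g{h}_{\g{k}_0}$, $U\in\g{w}$, $a,x\in\R$) and ask when $\Ad(g^{-1})X \in \g{k}$, equivalently when $X\in\Ad(g)\g{k}$. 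The cleanest route is to write down a general element $\Ad(g)Y$ for $Y = T' + X'_{\g{p}}\in\g{k}\oplus\g{p}$ and use the grading by root spaces together with $\theta\g{g}_\lambda = \g{g}_{-\lambda}$ and Lemma~\ref{lemma:aux} to compute components. One finds that $\Ad(g)\g{k}$ consists of vectors whose $\g{g}_0$-component pairs with $\xi$ in a controlled way; matching this against $X\in\g{h}$ and projecting onto the various root spaces forces $a=0$, $x=0$, $U=0$, and $[\xi,T]=0$, i.e.\ $X = T\in\g{q}\cap\ker\ad(\xi)$. The converse inclusion is immediate: if $T\in\g{q}$ and $[\xi,T]=0$ then $\Ad(g)T = T\in\g{k}$, so $T\in\g{h}_p$.

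**Reducing the $\g{k}_0$-part to $\g{q}$.** A subtlety is that $\g{h}_{\g{k}_0}$ need not equal $\g{q}$, since elements of $\g{h}$ may have the form $S_U + U$ with $S_U\in\g{h}_{\g{k}_0}\setminus\g{q}$ and $0\neq U\in\g{w}$. So when I conclude that the $\g{g}_\alpha$-component $U$ of a fixed element must vanish, I also need that its $\g{k}_0$-component lies in $\g{q}$. But by the definition of $\g{q}$ (as a complement chosen so that the map $U\mapsto S_U$ is well-defined and $\g{h}_{\g{k}_0} = \g{q}\oplus\{S_U : U\in\g{w}\}$ appropriately — or simply because any element of $\g{h}$ with zero $\g{w}$-component lies in $\g{q}\oplus\g{b}\oplus\g{r}$), once $U=0$ is forced the $\g{k}_0$-part is automatically in $\g{q}$. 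I would make this precise using the structural description of $\hat{\g{h}}$ established just before Lemma~\ref{lemma:section1}, namely that $\g{h}_{\g{a}\oplus\g{n}} = \g{b}\oplus\g{w}\oplus\g{r}$ and $[\g{q},\g{w}]\subset\g{w}$.

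**Main obstacle.** The main technical hurdle is the explicit bookkeeping in computing $\Ad(\Exp(\lambda\xi))$ on the relevant root spaces and correctly identifying which linear combinations land in $\g{k}$: one must track the $\g{g}_0 = \g{k}_0\oplus\g{a}$ contributions carefully, using $[\theta\xi,U] \in\g{g}_0$ and the formula $\langle T,(1+\theta)[\theta\xi,U]\rangle = 2\langle[T,\xi],U\rangle$ from Lemma~\ref{lemma:aux}\eqref{lemma:aux:b} to extract the condition $[\xi,T]=0$. Everything else is a routine consequence of the grading and the fact that $\g{a}$ and $\g{g}_{2\alpha}$ are one-dimensional and not $\theta$-stable. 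Given the preparatory lemmas this should go through cleanly, so I would present the forward direction as the root-space projection argument and dispatch the converse in one line.
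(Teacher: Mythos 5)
There is a genuine gap at the decisive step of your forward direction: the root-space bookkeeping does \emph{not} force $U=0$ and $[\xi,T]=0$. Carrying out the computation you propose, for $X=T+aB+U+xZ\in\g{h}\subset\g{k}_0\oplus\g{a}\oplus\g{n}$ one gets $\ad(\xi)X=-[T,\xi]-\tfrac{a}{2}\xi+\tfrac12\langle J\xi,U\rangle Z$ and $\ad(\xi)^2X=-\tfrac12\langle J\xi,[T,\xi]\rangle Z$, so the condition $\Ad(g)^{-1}X=e^{-\lambda\ad(\xi)}X\in\g{k}\cap(\g{k}_0\oplus\g{a}\oplus\g{n})=\g{k}_0$ yields only $a=0$, $U=-\lambda[T,\xi]$ and $x=-\tfrac{\lambda^2}{4}\langle J\xi,[T,\xi]\rangle$. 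Equivalently, $\g{h}\cap\Ad(g)\g{k}=\g{h}\cap\Ad(g)\g{k}_0$ consists of those elements $\Ad(g)T=T-\lambda[T,\xi]-\tfrac{\lambda^2}{4}\langle J\xi,[T,\xi]\rangle Z$, $T\in\g{k}_0$, which happen to lie in $\g{h}$; nothing in the grading rules out such elements with $[T,\xi]\neq0$. This is not a pedantic worry: the proof of Lemma~\ref{lemma:normalizes} manufactures precisely such a candidate, $\Ad(g)R=(T_U+U)-\tfrac{1}{4\lambda}\langle J\xi,U\rangle(S+Z)\in\g{h}\cap\Ad(g)\g{k}$ with nonzero $\g{g}_\alpha$-component, and obtains its contradiction by invoking the present lemma. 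If the lemma were a formal consequence of the root-space decomposition, that argument could not exist.

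The missing ingredient is geometric rather than algebraic: one must show $H_p\subset H_o=Q$, i.e.\ that every element of $H$ fixing $p$ also fixes $o$; this is what annihilates the $\g{g}_\alpha$-component. The paper gets it from the fact that the normal exponential map $\exp\colon\nu(H\cdot o)\to\C H^n$ is an $H$-equivariant diffeomorphism (T\"oben's theorem, using that $\C H^n$ is Hadamard and that $o$ lies on the orbit through the fixed point of the maximal compact subgroup $Q$), which identifies $H_p$ with the isotropy group $Q_v$ of the slice representation at $v=\exp_o^{-1}(p)=aB+b(1-\theta)\xi$; the identity $\g{h}_p=\{T\in\g{q}:[T,\xi]=0\}=\g{q}\cap\ker\ad(\xi)$ is then immediate. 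Two secondary points: $\ad(\xi)^3=0$ holds on $\g{k}_0\oplus\g{a}\oplus\g{n}$ but not on all of $\g{g}$ (e.g.\ $\ad(\xi)^4\g{g}_{-2\alpha}$ need not vanish), so expanding $\Ad(g)Y$ for a general $Y\in\g{k}$ as you suggest requires terms up to order four — computing $\Ad(g^{-1})X$ for $X\in\g{h}$ avoids this; and $\g{q}$ is simply $\g{h}\cap\g{k}=\g{h}\cap\g{k}_0$ (the Lie algebra of the isotropy group at $o$), not a chosen complement making $U\mapsto S_U$ well defined.
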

\begin{proof}
First notice that $\g{h}\cap\Ad(g)\g{k}$ is the Lie algebra of
$H_p=H\cap I_g(K)$. Let $v$ be the unique element in $\g{p}=T_o\C
H^n$ such that $\exp_{o}(v)=p$. We show that the isotropy group
$H_p$ coincides with the isotropy group  of the slice representation
of $Q$ at $v$, $Q_{v}$. By \cite[\S2]{To07} we know that the
normal exponential map $\exp\colon\nu(H\cdot o)\to\C H^n$ is an
$H$-equivariant diffeomorphism. Let $h\in H_p$. Since
$\exp_o(v)=p=h(p)=h\exp_o(v)=\exp_{h(o)}(h_{*o}v)$, we get
that $h(o)=o$ and $h_{*o}v=v$, and hence, $h\in Q_{v}$. The
$H$-equivariance of $\exp$ also shows the converse inclusion.
Therefore $H_p=Q_{v}$.

We can write $v=aB+b(1-\theta)\xi$ for certain $a$, $b\in\R$. In
fact, $\Exp(\lambda\xi)(o)$ belongs to the totally geodesic $\R H^2$
given by $\exp_o(\g{a}\oplus\R (1-\theta)\xi)$, and $b\neq 0$ if
$\lambda\xi\neq 0$. Then, the Lie algebra of $H_p=Q_{v}$ is
$\{T\in\g{q}:[T,aB+b(1-\theta)\xi]=0\}=\{T\in\g{q}:[T,\xi]=0\}$,
which is $\g{q}\cap\ker \ad(\xi)$.
\end{proof}

By definition, we say that a vector $\xi\in\g{s}$ is \emph{regular} if
$[\g{q},\xi]=\g{g}_\alpha\ominus(\g{w}\oplus\g{s})$. We have

\begin{lemma}
The set $\{\xi\in\g{s}:\text{$\xi$ is regular}\}$ is an open dense
subset of $\g{s}$.
\end{lemma}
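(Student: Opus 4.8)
The plan is to show that regularity is an \emph{open} condition cut out by a finite collection of Zariski-open conditions, and that it is \emph{nonempty}, whence dense. Fix $\g{s}$ and recall from Lemma~\ref{lemma:section1} that $[\g{q},\g{s}]\subset\g{g}_\alpha\ominus(\g{w}\oplus\g{s})$; write $\g{t}=\g{g}_\alpha\ominus(\g{w}\oplus\g{s})$ for the target subspace. The map $\g{q}\times\g{s}\to\g{t}$, $(T,\xi)\mapsto[T,\xi]$, is bilinear, so for fixed $\xi$ the image $[\g{q},\xi]$ is a linear subspace of $\g{t}$, and $\xi$ is regular precisely when $[\g{q},\xi]=\g{t}$. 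Choosing a basis $T_1,\dots,T_m$ of $\g{q}$, regularity of $\xi$ is equivalent to $\{[T_1,\xi],\dots,[T_m,\xi]\}$ spanning $\g{t}$, i.e.\ to the linear map $\g{q}\to\g{t}$, $T\mapsto[T,\xi]$, having maximal rank $\dim\g{t}$.

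First I would observe that openness is then immediate: the entries of a matrix representing $T\mapsto [T,\xi]$ (with respect to fixed bases of $\g{q}$ and $\g{t}$) depend linearly, hence continuously, on $\xi\in\g{s}$, and having rank equal to $\dim\g{t}$ is an open condition (the nonvanishing of at least one maximal minor). So the regular set is open in $\g{s}$. Next, for density, since the regular set is the complement of the common zero locus of all the $(\dim\g{t})\times(\dim\g{t})$ minors---which are polynomial functions on the vector space $\g{s}$---it is either empty or dense; indeed an open subset of a connected (hence irreducible as a real variety) affine space is dense as soon as it is nonempty, because the complement is a proper algebraic subset, of measure zero. Thus it suffices to exhibit \emph{one} regular vector.

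The nonemptiness is the only real point, and it is where I would invoke the polarity hypothesis rather than argue abstractly. By Proposition~\ref{prop:criterion2}(a), for \emph{every regular normal vector} $\xi\in\nu_o(H\cdot o)$ one has $T_o\Sigma\oplus[\g{h}_o,\xi]=\nu_o(H\cdot o)$; combined with the description $\nu_o(H\cdot o)=(\g{a}\ominus\g{b})\oplus(1-\theta)\g{w}^\perp\oplus(\g{p}_{2\alpha}\ominus(1-\theta)\g{r})$ and $T_o\Sigma$ from Lemma~\ref{lemma:section1}, taking the $(1-\theta)\g{g}_\alpha$-component shows that for a normal vector $\xi$ whose $\g{g}_\alpha$-part $\xi_{\g{g}_\alpha}$ lies in $\g{s}$ and which is regular for the slice representation, one gets $[\g{q},\xi_{\g{g}_\alpha}]=\g{t}$; such vectors exist because regular points of a polar representation form a dense set. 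More concretely, regular vectors of the $H_o$-action on $\nu_o(H\cdot o)$ are exactly those lying on a principal orbit and, since $T_o\Sigma$ meets every orbit, one finds a regular $\xi\in T_o\Sigma$; its $\g{g}_\alpha$-component is then a regular vector of $\g{s}$ in our sense by the dimension count above. Hence the regular set is nonempty, and therefore open and dense in $\g{s}$.

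The main obstacle I anticipate is bookkeeping the precise correspondence between ``regular for the slice representation of $H_o$ on $\nu_o(H\cdot o)$'' and ``regular in $\g{s}$'' as defined here: one must check that the isotropy $\g{h}_\xi=\g{q}\cap\ker\ad(\xi)$ computed in Lemma~\ref{lemma:isotropy} has minimal dimension exactly when $[\g{q},\xi]$ is as large as possible, which follows from the rank-nullity identity $\dim\g{q}=\dim\ker\ad(\xi)|_{\g{q}}+\dim[\g{q},\xi]$, so minimal isotropy $\Leftrightarrow$ maximal $[\g{q},\xi]$ $\Leftrightarrow$ (by Lemma~\ref{lemma:section1}, which forces $[\g{q},\xi]\subseteq\g{t}$) $[\g{q},\xi]=\g{t}$. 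Once this identification is in place, the openness is a matter of minors and the density follows from the algebraic-set argument together with the nonemptiness supplied by polarity.
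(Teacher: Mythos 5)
Your proposal is correct and rests on the same key step as the paper's proof: identifying ``$\xi\in\g{s}$ regular'' with ``the corresponding vector of $T_o\Sigma$ is a regular point of the slice representation'' via $[\g{q},aB+(1-\theta)\xi+x(1-\theta)Z]=(1-\theta)[\g{q},\xi]$ and $\nu_o(H\cdot o)\ominus T_o\Sigma=(1-\theta)(\g{g}_\alpha\ominus(\g{w}\oplus\g{s}))$. The only (harmless) difference is that you establish openness and the ``empty or dense'' dichotomy by elementary minor/measure-zero arguments and invoke polarity only to produce one regular vector, whereas the paper imports openness and density in one stroke from the standard fact that regular points of a section of a polar representation form an open dense subset.
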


\begin{proof}
An element of $T_o\Sigma$ can be written, according to
Lemma~\ref{lemma:section1}, as $v=aB+(1-\theta)\xi+x(1-\theta)Z$
where $a$, $x\in\R$, and $\xi\in\g{s}$. We have
$[\g{q},v]=(1-\theta)[\g{q},\xi]$ and $\nu_o(H\cdot o)\ominus
T_o\Sigma=(1-\theta)(\g{g}_\alpha\ominus(\g{w}\oplus\g{s}))$. An
element of $T_o\Sigma$ is regular (that is, belongs to a principal
orbit of the slice representation $Q\times\nu_o(H\cdot
o)\to\nu_o(H\cdot o)$) if and only if $[\g{q},v]=\nu_o(H\cdot
o)\ominus T_o\Sigma$. The previous equalities, and the fact that
$(1-\theta)\colon\g{g}_\alpha\to\g{p}_\alpha$ is an isomorphism
implies that $v$ is regular if and only if
$[\g{q},\xi]=\g{g}_\alpha\ominus(\g{w}\oplus\g{s})$. Since the set of
regular points of a section is open and dense, the result follows.
\end{proof}


\begin{lemma}\label{lemma:normalizes}
For each regular vector $\xi\in\g{s}$ we have
$[\g{h}_{\g{k}_0},\xi]=\g{g}_\alpha\ominus(\g{w}\oplus\g{s})$.
\end{lemma}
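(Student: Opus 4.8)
Lemma~\ref{lemma:normalizes} is the statement that for a regular
vector $\xi\in\g{s}$, the bracket $[\g{h}_{\g{k}_0},\xi]$ fills up the whole
space $\g{g}_\alpha\ominus(\g{w}\oplus\g{s})$, not just the part $[\g{q},\xi]$
coming from the compact subalgebra $\g{q}$. Since $\g{q}\subset\g{h}_{\g{k}_0}$
and regularity of $\xi$ already gives
$[\g{q},\xi]=\g{g}_\alpha\ominus(\g{w}\oplus\g{s})$, we have the inclusion
$\supseteq$ for free, so what must be shown is the \emph{reverse} inclusion:
$[\g{h}_{\g{k}_0},\xi]\subseteq\g{g}_\alpha\ominus(\g{w}\oplus\g{s})$. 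In other
words, applying any $S\in\g{h}_{\g{k}_0}$ to a regular $\xi$ lands in the
prescribed subspace. By Lemma~\ref{lemma:section1} we already know
$[\g{h}_{\g{k}_0},\g{s}]\subset\g{g}_\alpha\ominus\g{s}$ for \emph{all}
vectors of $\g{s}$, so the content of the lemma is the extra statement that for
a \emph{regular} $\xi$ one also has $[\g{h}_{\g{k}_0},\xi]\perp\g{w}$.

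\textbf{Key steps.} First I would fix a regular vector $\xi\in\g{s}$ and an
arbitrary $S\in\g{h}_{\g{k}_0}$, and decompose $[S,\xi]\in\g{g}_\alpha$
according to $\g{g}_\alpha=\g{w}\oplus(\g{g}_\alpha\ominus\g{w})$; the goal is
to kill the $\g{w}$-component. Second, I would use the Lie algebra structure:
since $\g{h}$ is a subalgebra and
$\g{h}_{\g{a}\oplus\g{n}}=\g{b}\oplus\g{w}\oplus\g{r}$, for each
$U\in\g{w}$ there is some $S_U\in\g{h}_{\g{k}_0}$ with $S_U+U\in\g{h}$, and
then $[S,S_U+U]\in\g{h}$ forces $[S,U]_{\g{w}}=[S,U]\in\g{w}$, i.e.\ the
$\g{k}_0$-action cannot push $\g{w}$ out of $\g{w}$ modulo
$\g{h}_{\g{k}_0}$; more precisely $[\g{h}_{\g{k}_0},\g{w}]\subset\g{w}$ as a
consequence of $\g{h}_{\g{a}\oplus\g{n}}=\g{b}\oplus\g{w}\oplus\g{r}$. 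Third,
I would combine this with skew-symmetry of $\ad(S)$ restricted to
$\g{g}_\alpha$ with respect to $\langle\cdot,\cdot\rangle$ (which holds because
$S\in\g{k}_0$ and $\ad(S)$ preserves $\g{g}_\alpha$, being an element of
$\g{u}(n-1)$, so $\langle[S,X],Y\rangle=-\langle X,[S,Y]\rangle$ on
$\g{g}_\alpha$). Then for any $U\in\g{w}$,
$\langle[S,\xi],U\rangle=-\langle\xi,[S,U]\rangle=0$ since $[S,U]\in\g{w}$ and
$\xi\in\g{s}\subset\g{g}_\alpha\ominus\g{w}$. Combined with the already-known
$[S,\xi]\perp\g{s}$ from Lemma~\ref{lemma:section1}, this yields
$[S,\xi]\in\g{g}_\alpha\ominus(\g{w}\oplus\g{s})$, which is the desired
inclusion. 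The reverse inclusion being immediate from regularity, equality
follows.

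\textbf{The main obstacle.} The delicate point is establishing that
$[\g{h}_{\g{k}_0},\g{w}]\subset\g{w}$; the naive argument ``pick $S_U+U\in\g{h}$ and
bracket with $S$'' only shows $[S,U]\in\g{w}$ when $S$ itself is of the form
appearing in a $\g{k}_0$-component paired with some element of $\g{w}$, and one
must be careful that an arbitrary $S\in\g{h}_{\g{k}_0}$ need not itself lie in
$\g{h}$. The clean way around this is to observe that
$\g{h}_{\g{a}\oplus\g{n}}=\g{b}\oplus\g{w}\oplus\g{r}$ means $\g{h}$ is the graph
of a linear map $\g{b}\oplus\g{w}\oplus\g{r}\to\g{h}_{\g{k}_0}$ (over a complement),
together with the kernel $\g{q}=\g{h}\cap\g{k}_0$; bracketing the graph with
an element $S\in\g{h}_{\g{k}_0}$ — note $\g{h}_{\g{k}_0}$ need not be contained
in $\g{h}$, but for $S\in\g{q}\subset\g{h}$ the computation is clean, and the
general $S\in\g{h}_{\g{k}_0}$ is handled by decomposing $S=S_0+\sum S_{U_j}$ with
$S_0\in\g{q}$ — and reading off the $\g{w}$-component shows that $\ad(S)$
maps $\g{w}$ into $\g{w}$ and into $\g{h}_{\g{k}_0}$; since the latter two
spaces have trivial intersection inside $\g{g}$, the $\g{w}$-part is
well-defined and $\ad(S)\g{w}\subseteq\g{w}$ follows. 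Once this structural
fact is in hand, the rest is a two-line orthogonality computation using
skew-symmetry of $\ad(S)$ on $\g{g}_\alpha$.
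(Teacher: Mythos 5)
Your reduction of the lemma to the single claim $[\g{h}_{\g{k}_0},\g{w}]\subset\g{w}$ is correct as far as it goes: granted that claim, skew-symmetry of $\ad(S)$ on $\g{g}_\alpha$ for $S\in\g{k}_0$ plus Lemma~\ref{lemma:section1} does finish the proof in two lines, and you have rightly identified this claim as the main obstacle. But your proposed way around the obstacle does not work. When you bracket two graph elements $T_U+U$ and $S_V+V$ of $\g{h}$ (with $U,V\in\g{w}$) and read off the $\g{g}_\alpha$-component, what lands in $\g{w}$ is only the antisymmetric combination $[T_U,V]-[S_V,U]$, not each term $[S_V,U]$ separately. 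So for $S=S_V$ with $V\neq0$ the argument gives no control on $[S_V,\g{w}]$ modulo $\g{w}$; your decomposition $S=S_0+\sum S_{U_j}$ reduces the general case precisely to these problematic summands. In fact the paper proves $[\g{h}_{\g{k}_0},\g{w}]\subset\g{w}$ only in the \emph{next} lemma (Lemma~\ref{lemma:normalizes2}), and its proof there \emph{uses} Lemma~\ref{lemma:normalizes}; assuming it here inverts the logical order and is circular.

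The missing idea is that the symmetric part of the failure cannot be killed by algebra alone. The paper packages it into a well-defined, linear, \emph{self-adjoint} operator $F_\xi\colon\g{w}\to\g{w}$, $U\mapsto[T_U,\xi]$ (after correcting $T_U$ by an element of $\g{q}$ so that the bracket lies in $\g{w}$); the antisymmetrized bracket relation you invoke is exactly what proves self-adjointness of $F_\xi$, but says nothing about its vanishing. To show $F_\xi=0$ one takes a hypothetical eigenvector with nonzero eigenvalue $\lambda$ and derives a contradiction from a genuinely geometric input: by Lemma~\ref{lemma:isotropy} the isotropy algebra of $H$ at $\Exp(-\tfrac1\lambda\xi)(o)$ equals $\g{q}\cap\ker\ad(\xi)$, yet one can exhibit an explicit element of $\g{h}\cap\Ad(g)\g{k}$ (built from $T_U$ and an $S$ with $S+Z\in\g{h}$, $[S,\xi]=0$) that is not of that form. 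Your proposal contains no substitute for this step. A secondary omission: when $\g{r}=0$ the paper first shows $\g{w}=0$ via the totally real section, a case your argument silently skips, though it is harmless once the main gap is repaired.
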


\begin{proof}
Let $\xi\in\g{s}$ be a regular vector, that is,
$[\g{q},\xi]=\g{g}_\alpha\ominus(\g{w}\oplus\g{s})$. In order to
prove the lemma, it is enough to show that
$[\g{h}_{\g{k}_0},\xi]\subset\g{g}_\alpha\ominus\g{w}$, since
$\g{q}\subset\g{h}_{\g{k}_0}$ and, by Lemma~\ref{lemma:section1},
$[\g{h}_{\g{k}_0},\xi]\subset \g{g}_\alpha\ominus\g{s}$.

First, consider the case $\g{r}=0$. By Lemma~\ref{lemma:section1},
$T_o\Sigma=(1-\theta)\g{s}\oplus\R(1-\theta)Z$ for each section
$\Sigma$ through $o$, where $\g{s}$ is some totally real subspace of
$\g{g}_\alpha$. By Proposition~\ref{prop:criterion2} we have
$\nu_o(H\cdot o)=\Ad(Q)(T_o\Sigma)$ and, thus, for any
$\eta\in\g{g}_\alpha\ominus\g{w}$ we can find a section $\Sigma$
through $o$ such that $\eta\in \g{s}$ by conjugating by a suitable
element in $Q$. Then using Lemma~\ref{lemma:aux}, we have that
$(1+\theta)J\eta=[(1-\theta)\eta,(1-\theta)Z]\in
[T_o\Sigma,T_o\Sigma]$. Let $W\in\g{w}$ and $T_W\in\g{h}_{\g{k}_0}$
be such that $T_W+W\in\g{h}$. Since by
Proposition~\ref{prop:criterion2} we have
$\langle\g{h},[T_o\Sigma,T_o\Sigma]\rangle=0$, then $0=\langle
T_W+W,(1+\theta)J\eta\rangle=\langle W, J\eta\rangle$. We have then
shown that $J(\g{g}_\alpha\ominus\g{w})$ is orthogonal to $\g{w}$,
that is, $\g{g}_\alpha\ominus\g{w}$ is a complex subspace of
$\g{g}_\alpha$. Since $\g{w}$ is totally real, we deduce $\g{w}=0$.
But then $[\g{h}_{\g{k}_0},\xi]\subset\g{g}_\alpha\ominus\g{w}$ holds
trivially.

For the rest of the proof, we assume that $\g{r}=\g{g}_{2\alpha}$.

Let $T_B\in\g{h}_{\g{k}_0}$ and $a\in\R$ such that $T_B+aB\in\g{h}$.
Note that, if $\g{b}=0$, then $a=0$, $T_B\in\g{q}$ and there is
nothing to prove. For each $U\in \g{w}$ take an
$S_U\in\g{h}_{\g{k}_0}$ with $S_U+U\in\g{h}$. Then
$[T_B,S_U]+[T_B,U]+\frac{a}{2}U=[T_B+aB, S_U+U]\in \g{h}$, so
$[T_B,U]+\frac{a}{2}U\in\g{w}$, from where $[T_B,U]\in\g{w}$. Hence,
$\langle [T_B,\xi],U\rangle=-\langle\xi,[T_B,U]\rangle=0$, so we get
$[T_B,\xi]\in\g{g}_\alpha\ominus\g{w}$.

Now let $T_Z\in\g{h}_{\g{k}_0}$ and $x\in Z$ with $T_Z+xZ\in\g{h}$.
For each $U\in \g{w}$ take an $S_U\in\g{h}_{\g{k}_0}$ with
$S_U+U\in\g{h}$. Then $[T_Z,S_U]+[T_Z,U]=[T_Z+Z, S_U+U]\in \g{h}$, so
$[T_Z,U]\in\g{w}$. As above, we conclude
$[T_Z,\xi]\in\g{g}_\alpha\ominus\g{w}$.

Finally, we have to prove that for each $U\in\g{w}$, if
$T_U\in\g{h}_{\g{k}_0}$ is such that $T_U+U\in\g{h}$, then
$[T_U,\xi]\in\g{g}_\alpha\ominus\g{w}$. This will require some
effort.

Let $U\in\g{w}$ and $T_U'\in\g{h}_{\g{k}_0}$ with $T_U'+U\in\g{h}$. By
Lemma~\ref{lemma:section1}, $[T_U',\xi]\in\g{g}_\alpha\ominus\g{s}
=\g{w}\oplus(\g{g}_\alpha\ominus(\g{w}\oplus\g{s}))$. Since
$[\g{q},\xi]=\g{g}_\alpha\ominus(\g{w}\oplus\g{s})$, we can find an
$S\in\g{q}$ so that $[T_U'+S,\xi]\in\g{w}$. Thus, if we define $T_U=T_U'+S$, we still have $T_U\in\g{h}_{\g{k}_0}$ and $T_U+U\in\g{h}$, but also $[T_U,\xi]\in\g{w}$.
Therefore we can define
the endomorphism of~$\g{w}$
\[
F_\xi\colon\g{w}\to\g{w},\quad U\mapsto [T_U,\xi],\quad
\text{where $T_U\in\g{h}_{\g{k}_0}$, $T_U+U\in\g{h}$, and $[T_U,\xi]\in\g{w}$.}
\]
The map $F_\xi$ is well-defined. Indeed, if $T_U$,
$S_U\in\g{h}_{\g{k}_0}$, $U\in\g{w}$, $T_U+U$, $S_U+U\in\g{h}$, and
$[T_U,\xi]$, $[S_U,\xi]\in\g{w}$, then $T_U-S_U\in\g{q}$, so
$[T_U,\xi]-[S_U,\xi]=[T_U-S_U,\xi]\in\g{g}_\alpha\ominus(\g{w}\oplus\g{s})$,
and $[T_U,\xi]-[S_U,\xi]\in\g{w}$. Hence $[T_U,\xi]=[S_U,\xi]$. It is
also easy to check that $F_\xi$ is linear.

Furthermore, $F_\xi$ is self-adjoint. To see this, let $T_U$,
$S_V\in\g{h}_{\g{k}_0}$, $U$, $V\in\g{w}$, with $T_U+U$,
$S_V+V\in\g{h}$, and $[T_U,\xi]$, $[S_V,\xi]\in\g{w}$. Then we have
\begin{align*}
0&=\langle [T_U+U,S_V+V],\xi\rangle
=\langle[T_U,V],\xi\rangle-\langle[S_V,U],\xi\rangle
=-\langle V, [T_U,\xi]\rangle+\langle U,[S_V,\xi]\rangle\\
&=-\langle F_\xi(U),V\rangle+\langle F_\xi(V), U\rangle.
\end{align*}

Assume now that $F_\xi\neq 0$. Then $F_\xi$ admits an
eigenvector $U\in\g{w}$ with nonzero eigenvalue $\lambda\in\R$:
$F_\xi(U)=\lambda U\neq 0$. We will get a contradiction with this.

Let $g=\Exp(-\frac{1}{\lambda}\xi)$, and consider
$T_U\in\g{h}_{\g{k}_0}$ such that $T_U+U\in\g{h}$ and
$F_\xi(U)=[T_U,\xi]=\lambda U$. We also consider an element
$S\in\g{h}_{\g{k}_0}$ such that $S+Z\in\g{h}$ and $[S,\xi]=0$; this
is possible because
$[S,\xi]\in\g{g}_\alpha\ominus(\g{w}\oplus\g{s})=[\g{q},\xi]$ and
$\g{q}\subset\g{h}$. If we define $R=T_U-\frac{1}{4\lambda}\langle
J\xi,U\rangle S\in\g{h}_{\g{k}_0}$, then we have
\begin{align*}
\Ad(g)R&=e^{-\frac{1}{\lambda}\ad(\xi)}R
= T_U - \frac{1}{\lambda}[\xi,T_U]+\frac{1}{2\lambda^2}[\xi,[\xi,T_U]]
-\frac{1}{4\lambda}\langle J\xi,U\rangle S\\
&= (T_U + U) -\frac{1}{4\lambda}\langle J\xi, U\rangle (S+Z)\in\g{h}\cap\Ad(g)(\g{k}).
\end{align*}
However, $\Ad(g)R\not\in\g{q}\cap\ker\ad(\xi)$.  By virtue of
Lemma~\ref{lemma:isotropy}, this gives a contradiction. Thus we must
have $F_\xi=0$, from where the result follows.
\end{proof}

\begin{lemma}\label{lemma:normalizes2}
The subspace $\g{h}_{\g{k}_0}$ is a subalgebra of $\g{k}_0$ and
$[\g{h}_{\g{k}_0},\g{w}]\subset\g{w}$.
\end{lemma}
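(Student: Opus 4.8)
The plan is to prove the two assertions separately: the subalgebra property is essentially formal, while the normalizing property rests on the regularity machinery established in Lemmas~\ref{lemma:section1} and~\ref{lemma:normalizes}.

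First I would observe that $\g{a}\oplus\g{n}$ is an ideal of the parabolic subalgebra $\g{k}_0\oplus\g{a}\oplus\g{n}$: this follows at once from the bracket relations of the root space decomposition, since $\g{g}_0=\g{k}_0\oplus\g{a}$ centralizes $\g{a}$, $\g{k}_0$ normalizes $\g{g}_\alpha$ and $\g{g}_{2\alpha}$, and $\g{a}\oplus\g{n}$ is itself a subalgebra. Consequently the projection $\pi\colon\g{k}_0\oplus\g{a}\oplus\g{n}\to\g{k}_0$ along $\g{a}\oplus\g{n}$ is a homomorphism of Lie algebras, and therefore $\g{h}_{\g{k}_0}=\pi(\g{h})$ is a subalgebra of $\g{k}_0$.

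For the inclusion $[\g{h}_{\g{k}_0},\g{w}]\subseteq\g{w}$ I would fix a section $\Sigma$ of the $H$-action through $o$; by Lemma~\ref{lemma:section1} its tangent space involves a totally real subspace $\g{s}\subseteq\g{g}_\alpha\ominus\g{w}$, giving the orthogonal decomposition $\g{g}_\alpha=\g{w}\oplus\g{s}\oplus\g{m}$ with $\g{m}=\g{g}_\alpha\ominus(\g{w}\oplus\g{s})$. Given $T\in\g{h}_{\g{k}_0}$ and $W\in\g{w}$ one has $[T,W]\in\g{g}_\alpha$, and it is enough to show that $[T,W]$ is orthogonal to both $\g{s}$ and $\g{m}$. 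For orthogonality to $\g{s}$: for every regular $\xi\in\g{s}$, Lemma~\ref{lemma:normalizes} gives $[T,\xi]\in\g{m}\perp\g{w}$, so $\langle[T,W],\xi\rangle=-\langle W,[T,\xi]\rangle=0$ (using $\theta|_\g{k}=\id$); since the regular vectors are dense in $\g{s}$, we get $[T,W]\perp\g{s}$. For orthogonality to $\g{m}$ (the case $\g{m}=0$ being immediate from the previous step), fix one regular $\xi\in\g{s}$, so that $[\g{q},\xi]=\g{m}$ by definition of regularity; then for $T'\in\g{q}$ the Jacobi identity yields
\[
\langle[T,W],[T',\xi]\rangle=-\langle W,[T,[T',\xi]]\rangle=-\langle W,[[T,T'],\xi]\rangle-\langle W,[T',[T,\xi]]\rangle .
\]
The first term vanishes because $[T,T']\in\g{h}_{\g{k}_0}$ by the first part, so $[[T,T'],\xi]\in\g{m}\perp\g{w}$ by Lemma~\ref{lemma:normalizes}; the second vanishes because $[T,\xi]\in\g{m}$ and $[\g{q},\g{m}]\perp\g{w}$, the latter since $\langle[\g{q},\g{m}],\g{w}\rangle=-\langle\g{m},[\g{q},\g{w}]\rangle\subseteq\langle\g{m},\g{w}\rangle=0$, where $[\g{q},\g{w}]\subseteq\g{w}$ holds because $\hat{\g{h}}$ is a subalgebra. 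Letting $T'$ range over $\g{q}$ gives $[T,W]\perp\g{m}$, and hence $[T,W]\in\g{w}$.

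The delicate point is the orthogonality to $\g{m}$: it genuinely uses that $\g{h}_{\g{k}_0}$ is a subalgebra (so that $[T,T']$ can be fed into Lemma~\ref{lemma:normalizes}) and the identity $[\g{q},\g{w}]\subseteq\g{w}$ coming from the subalgebra hypothesis on $\hat{\g{h}}$, together with the fact that a single regular $\xi$ already spans $\g{m}$ via the bracket with $\g{q}$. What remains is only the routine verification that regular vectors exist and the separate, trivial treatment of the degenerate cases $\g{m}=0$ or $\g{s}=0$.
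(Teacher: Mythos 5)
Your proof is correct, and it rests on the same key inputs as the paper's argument (the bracket relations of the parabolic subalgebra, Lemma~\ref{lemma:normalizes}, the density of regular vectors, and the skew-symmetry of $\ad(\g{k}_0)$), but the way you cover the complement of $\g{w}\oplus\g{s}$ is genuinely different. The paper fixes $X\in\g{g}_\alpha\ominus\g{w}$ and uses $\Ad(Q)(T_o\Sigma)=\nu_o(H\cdot o)$ from Proposition~\ref{prop:criterion2} to conjugate the section so that $X$ itself lies in some $\g{s}$, applies Lemma~\ref{lemma:normalizes} there for regular $X$, extends to non-regular $X$ by density and continuity, and only then dualizes by skew-symmetry to get $[\g{h}_{\g{k}_0},\g{w}]\subset\g{w}$. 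You instead stay inside a single section, split $\g{g}_\alpha\ominus\g{w}=\g{s}\oplus\g{m}$, kill the $\g{s}$-component by dualizing Lemma~\ref{lemma:normalizes} directly, and kill the $\g{m}$-component by writing $\g{m}=[\g{q},\xi]$ for one regular $\xi$ and running the Jacobi identity --- which correctly requires feeding $[T,T']\in\g{h}_{\g{k}_0}$ back into Lemma~\ref{lemma:normalizes} and invoking $[\g{q},\g{w}]\subset\g{w}$. What your route buys is that density is only ever used inside the fixed $\g{s}$, which is exactly where the paper's regularity lemma establishes it, whereas the paper's version implicitly needs regular vectors to be dense in all of $\g{g}_\alpha\ominus\g{w}$; the price is the extra Jacobi computation and the degenerate case $\g{s}=0$, which you rightly flag and which is indeed vacuous since $\Ad(Q)(T_o\Sigma)=\nu_o(H\cdot o)$ then forces $\g{w}=\g{g}_\alpha$. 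Your derivation of the subalgebra property from the fact that $\g{a}\oplus\g{n}$ is an ideal is just a structural rephrasing of the paper's direct bracket computation, so that half is essentially identical.
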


\begin{proof}
If $T+aB+U+xZ$, $S+bB+V+yZ\in\g{h}$, with $T$, $S\in\g{h}_{\g{k}_0}$,
$U$, $V\in\g{w}$, and $a$, $b$, $x$, $y\in\R$, then the bracket
$[T+aB+U+xZ,S+bB+V+yZ]=[T,S]+[T,V]-[S,U]+\frac{a}{2}V-\frac{b}{2}U
+\left(\frac{1}{2}\langle JU,V\rangle +ay-bx\right)Z$ belongs to
$\g{h}$. In particular $[T,S]\in\g{h}_{\g{k}_0}$, so
$\g{h}_{\g{k}_0}$ is a Lie subalgebra of $\g{k}_0$. Taking $U=0$,
$a=b=x=y=0$ we obtain that $[\g{q},\g{w}]\subset\g{w}$ and hence
$[\g{q},\g{g}_\alpha\ominus\g{w}]\subset\g{g}_\alpha\ominus\g{w}$.

Now let $X\in\g{g}_\alpha\ominus\g{w}$. For any section through $o$
we have $\Ad(Q)(T_o\Sigma)=\nu_0(H\cdot
o)=(\g{a}\ominus\g{b})\oplus(1-\theta)(\g{g}_\alpha\ominus\g{w})
\oplus(1-\theta)(\g{g}_{2\alpha}\ominus\g{r})$, and $(\g{a}\ominus\g{b})
\oplus(1-\theta)(\g{g}_\alpha\ominus\g{r})\subset T_o\Sigma$ by
Lemma~\ref{lemma:section1}. Hence, for
$(1-\theta)X\in(1-\theta)(\g{g}_\alpha\ominus\g{w})$ we can find a
section $\Sigma$ such that $(1-\theta)X\in T_o\Sigma$ (after
conjugation by an element of $Q$ if necessary). Then, if $X$ is
regular, Lemma~\ref{lemma:normalizes} implies
$[\g{h}_{\g{k}_0},X]\subset\g{g}_\alpha\ominus\g{w}$. Since the set
of regular vectors is dense, $X$ can always be approximated by a
sequence of regular vectors, and hence, by continuity we also obtain
$[\g{h}_{\g{k}_0},X]\subset\g{g}_\alpha\ominus\g{w}$ for non-regular
vectors. Therefore,
$[\g{h}_{\g{k}_0},\g{g}_\alpha\ominus\g{w}]\subset\g{g}_\alpha\ominus\g{w}$.
Finally, the skew-symmetry of the elements of
$\ad(\g{k}_0)$ implies
$[\g{h}_{\g{k}_0},\g{w}]\subset\g{w}$.
\end{proof}

We can now finish the proof of Proposition~\ref{prop:orbit equivalent}.

\begin{proof}[Proof of Proposition~\ref{prop:orbit equivalent}]
The fact that $\hat{\g{h}}=\g{q}\oplus\g{b}\oplus\g{w}\oplus\g{r}$ is
a subalgebra of $\g{k}_0\oplus\g{a}\oplus\g{n}$, and
Lemma~\ref{lemma:normalizes2}, imply that
$\tilde{\g{h}}=\g{h}_{\g{k}_0}\oplus\g{b}\oplus\g{w}\oplus\g{r}$ is a
Lie subalgebra of $\g{g}$ that contains $\g{h}$ and $\hat{\g{h}}$.
Let  $\tilde{H}$ be the connected subgroup of $G$ whose Lie algebra
is $\tilde{\g{h}}$. Since $T_o(H\cdot o)=T_o(\tilde{H}\cdot
o)=T_o(\hat{H}\cdot
o)=\g{b}\oplus(1-\theta)\g{w}\oplus(1-\theta)\g{r}$ and
$H\subset\tilde{H}$, $\hat{H}\subset \tilde{H}$, the orbits through
$o$ of the groups $H$, $\tilde{H}$, and $\hat{H}$ coincide. The principal orbits of the slice
representation of $H$ at $o$ are orbits of the slice representation of $\tilde{H}$ at $o$. Indeed, for a section $\Sigma$ through $o$ and
$v=aB+(1-\theta)\xi+x(1-\theta)Z\in T_o\Sigma$ with $\xi\in\g{s}$
regular, Lemma~\ref{lemma:normalizes} implies
$[\g{h}_{\g{k}_0},\xi]=\g{g}_\alpha\ominus(\g{w}\oplus\g{s})=[\g{q},\xi]$.
Thus, the tangent spaces at $v$ to the orbits of the slice
representations of $H$ and $\tilde{H}$ through $v$ coincide, and
since $H\subset\tilde{H}$, both orbits coincide. Then, Lemma~\ref{lemma:sameslice} guarantees that
the actions of $H$ and $\tilde{H}$ on $\C H^n$ have the same orbits.
Similarly, an analogous argument with $\hat{H}$ instead of $H$ allows
to show that the actions of $\hat{H}$ and $\tilde{H}$ on $\C H^n$ have the same orbits, and this completes the proof.
\end{proof}

We now proceed with the proof of Theorem~\ref{th:Parabolic}.

Let $H$ be a closed subgroup of the isometry group of $\C H^n$ acting
polarly on $\C H^n$, and assume that the Lie algebra of $H$ is
contained in a maximal parabolic subalgebra
$\g{k}_0\oplus\g{a}\oplus\g{n}$. As we argued at the beginning of
this section, there is a maximal compact subgroup $Q$ of $H$, and we
can assume that $o\in\C H^n$ is a fixed point of $Q$, that is, the
isotropy group of $H$ at $o$ is $Q$. We are now interested in
$\g{h}_{\g{a}\oplus\g{n}}$, the orthogonal  projection of $\g{h}$ on
$\g{a}\oplus\g{n}$. It is clear that $\g{h}_{\g{a}\oplus\g{n}}$ can
be written in one of the following forms: $\g{w}$,
$\R(B+X)\oplus\g{w}$, $\R(B+X+xZ)\oplus\g{w}$ (with $x\neq 0$),
$\g{w}\oplus\R(Y+Z)$, or $\R(B+X)\oplus \g{w}\oplus \R(Y+Z)$, where
$\g{w}\subset \g{g}_\alpha$, and $X$, $Y\in\g{g}_\alpha$.

In order to conclude the proof of Theorem~\ref{th:Parabolic} we deal
with these five possibilities separately.

\subsubsection*{{Case~1:}
$\g{h}_{\g{a}\oplus\g{n}}=\g{w}$, with $\g{w}$ a subspace of
$\g{g}_\alpha$}\hfill

Here $\g{h}$ is in the hypotheses of
Proposition~\ref{prop:orbit equivalent}, and it readily follows from
Lemma~\ref{lemma:section1} that this case is not possible.

\subsubsection*{{Case~2:} $\g{h}_{\g{a}\oplus\g{n}}=\R(B+X)\oplus\g{w}$,
with $\g{w}$ a subspace of $\g{g}_\alpha$, and
$X\in\g{g}_\alpha\ominus\g{w}$}\hfill

Assume first that $X\neq 0$. Then, $\nu_o(H\cdot o)=\R (-\lVert
X\rVert^2B+(1-\theta)X)\oplus(1-\theta)(\g{g}_\alpha\ominus\g{w})\oplus\g{p}_{2\alpha}$.
Let $\Sigma$ be a section through $o$. Since $T_o\Sigma\subset
\nu_o(H\cdot o)$, $[\g{q},-\lVert
X\rVert^2B+(1-\theta)X]\subset\g{p}_\alpha$,
$[\g{q},\g{p}_{2\alpha}]=0$, and
$[\g{q},\g{p}_\alpha]\subset\g{p}_\alpha$, we get that
$[\g{q},T_o\Sigma]$ is orthogonal to $\g{a}$ and $\g{p}_{2\alpha}$.
As $\nu_o(H\cdot o)=T_o\Sigma\oplus[\g{q},T_o\Sigma]$ (orthogonal
direct sum) by Proposition~\ref{prop:criterion2}, we readily get that
$\g{p}_{2\alpha}\subset T_o\Sigma$. Moreover, let
$T\in\g{h}_{\g{k}_0}$ be such that $T+B+X\in\g{h}$; then $T+B+X$ is
orthogonal to $[\g{q},T_o\Sigma]$, and since
$[\g{q},T_o\Sigma]\subset\g{p}_\alpha$ we obtain that $X$ is
orthogonal to $[\g{q},T_o\Sigma]$. The fact that the direct sum
$\nu_o(H\cdot o)=T_o\Sigma\oplus[\g{q},T_o\Sigma]$ is orthogonal
implies that $-\lVert X\rVert^2B+(1-\theta)X\in T_o\Sigma$. However,
since $T_o\Sigma$ is totally real we have
\[
0=\langle i(-\lVert
X\rVert^2B+(1-\theta)X),(1-\theta)Z\rangle=\langle -\frac{1}{2}\lVert
X\rVert^2(1-\theta)Z+(1-\theta)JX,(1-\theta)Z\rangle=-2\lVert
X\rVert^2,
\]
which is not possible because $X\neq 0$.

Therefore we must have $X=0$, and thus
$\g{h}_{\g{a}\oplus\g{n}}=\g{a}\oplus\g{w}$. Note that the fact that
$\g{h}$ is a subalgebra of $\g{k}_0\oplus\g{a}\oplus\g{n}$ implies
that $\g{w}$ is a totally real subspace of $\g{g}_\alpha$. We are now
in the hypotheses of Proposition~\ref{prop:orbit equivalent} and, as
shown in the proof of Lemma~\ref{lemma:normalizes}, $\g{w}=0$. We
conclude that the action of $H$ is orbit equivalent to the action of
the group $\hat{H}$ whose Lie algebra is
$\hat{\g{h}}=\g{q}\oplus\g{a}$. This corresponds to
Theorem~\ref{th:Parabolic}(\ref{th:Parabolic:qaw}).

\subsubsection*{{Case~3:}
$\g{h}_{\g{a}\oplus\g{n}}=\R(B+X+xZ)\oplus\g{w}$, with $\g{w}$ a
subspace of $\g{g}_\alpha$, $X\in\g{g}_\alpha\ominus\g{w}$, and
$x\in\R$, $x\neq 0$}\hfill

Let $g=\Exp(xZ)\in G$, and let $T+r(B+X+xZ)+V$ be a generic element
of $\g{h}$, with $V\in\g{w}$, $r\in\R$. Clearly, since $g\in AN$ we
have $\Ad(g)(\g{h})\subset\g{k}_0\oplus\g{a}\oplus\g{n}$. Then, it is
easy to obtain
\[
\Ad(g)(T+r(B+X+xZ)+V)=T+r(B+X+xZ)+V-rxZ=T+r(B+X)+V.
\]
Hence $(\Ad(g)(\g{h}))_{\g{a}\oplus\g{n}}=\R(B+X)\oplus\g{w}$, and
$\Ad(g)(\g{q})=\g{q}$. Since $Q$ is a maximal compact subgroup of
$I_g(H)=gHg^{-1}$, and the orthogonal projection of the Lie algebra
of $I_g(H)$ onto $\g{a}\oplus\g{n}$ is $\R(B+X)\oplus\g{w}$, the new
group $I_g(H)$ satisfies the conditions of Case~2. Therefore, the
action of $H$ is orbit equivalent to the action of the group
$\hat{H}$ whose Lie algebra is
$\hat{\g{h}}=\g{q}\oplus\g{a}$. This also corresponds to
Theorem~\ref{th:Parabolic}(\ref{th:Parabolic:qaw}).

\subsubsection*{{Case~4:} $\g{h}_{\g{a}\oplus\g{n}}=\g{w}\oplus\R(Y+Z)$,
with $\g{w}$ a subspace of $\g{g}_\alpha$, and
$Y\in\g{g}_\alpha\ominus\g{w}$}\hfill

Assume that $Y\neq 0$. Then, $\nu_o(H\cdot o)=\g{a}
\oplus(1-\theta)(\g{g}_\alpha\ominus\g{w})\oplus\R(2(1-\theta)Y-\lVert
Y\rVert^2(1-\theta)Z)$. Let $\Sigma$ be a section through $o$. Then,
by Proposition~\ref{prop:criterion2} we have $\nu_o(H\cdot
o)=T_o\Sigma\oplus[\g{q},T_o\Sigma]$ (orthogonal direct sum). Since
$[\g{q},2(1-\theta)Y-\lVert
Y\rVert^2(1-\theta)Z]\subset\g{p}_\alpha$, $[\g{q},\g{a}]=0$, and
$[\g{q},\g{p}_\alpha]\subset\g{p}_\alpha$, we get that
$[\g{q},T_o\Sigma]$ is orthogonal to $\g{a}$ and $\g{p}_{2\alpha}$.
Then, $\g{a}\subset T_o\Sigma$. On the other hand, if
$T\in\g{h}_{\g{k}_0}$ is such that $T+Y+Z\in\g{h}$, then $T+Y+Z$ is
orthogonal to $[\g{q},T_o\Sigma]\subset\nu_o(H\cdot o)$, and since
$[\g{q},T_o\Sigma]\subset\g{p}_\alpha$ we also obtain that $Y$ is
orthogonal to $[\g{q},T_o\Sigma]$. Thus, $2(1-\theta)Y-\lVert
Y\rVert^2(1-\theta)Z\in T_o\Sigma$. But, since $T_o\Sigma$ is totally
real, we get
\[
0=\langle B,i(2(1-\theta)Y-\lVert Y\rVert^2(1-\theta)Z)\rangle
=\langle B,2(1-\theta)JY+2\lVert Y\rVert^2 B\rangle=2\lVert Y\rVert^2,
\]
which contradicts $Y\neq 0$.

Therefore we have $Y=0$, and thus,
$\g{h}_{\g{a}\oplus\g{n}}=\g{w}\oplus\g{g}_{2\alpha}$. We are now in
the hypotheses of Proposition~\ref{prop:orbit equivalent}, and we
conclude that the action of $H$ is orbit equivalent to the action of
the connected subgroup $\hat{H}$ of the isometry group of $\C H^n$ whose
Lie algebra is $\hat{\g{h}}=\g{q}\oplus\g{w}\oplus\g{g}_{2\alpha}$,
with $\g{w}$ a subspace of $\g{g}_\alpha$. This corresponds to
Theorem~\ref{th:Parabolic}(\ref{th:Parabolic:qwg}).

\subsubsection*{{Case~5:} $\g{h}_{\g{a}\oplus\g{n}}
=\R(B+X)\oplus\g{w}\oplus\R(Y+Z)$,
with $\g{w}\subset\g{g}_\alpha$, and $X$,
$Y\in\g{g}_\alpha\ominus\g{w}$}\hfill

This final possibility is more involved.

Our first aim is to show that $Y=0$. So, assume for the moment that
$Y\neq 0$.

\begin{lemma}
We have $X=\gamma Y+\frac{2}{\lVert Y\rVert^2}JY$, with
$\gamma\in\R$.

\end{lemma}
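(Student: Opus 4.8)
The plan is to pin down $X$ from three facts: the $\Ad(Q)$-invariance of the normal space, the subalgebra condition on $\g{h}$, and Lemma~\ref{lemma:isotropy} applied along the direction~$Y$, in the spirit of the proof of Lemma~\ref{lemma:normalizes}.

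First I would exploit that $T_o(H\cdot o)=\frac12(1-\theta)(\g{h}_{\g{a}\oplus\g{n}})$, so that by Lemma~\ref{lemma:equivariance} the normal space $\nu_o(H\cdot o)$ is the image, under the $Q$-equivariant isometry $\frac12(1-\theta)$, of the orthogonal complement
\[
C=\bigl\{-\tfrac12\langle W,X\rangle B+W-\tfrac12\langle W,Y\rangle Z:\ W\in\g{g}_\alpha\ominus\g{w}\bigr\}
\]
of $\g{h}_{\g{a}\oplus\g{n}}=\R(B+X)\oplus\g{w}\oplus\R(Y+Z)$ in $\g{a}\oplus\g{g}_\alpha\oplus\g{g}_{2\alpha}$ with respect to $\langle\cdot,\cdot\rangle_{AN}$. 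As $\nu_o(H\cdot o)$ is $\Ad(Q)$-invariant, so is $C$; applying $q\in Q$ (which fixes $B$ and $Z$, since $\g{k}_0$ annihilates $\g{a}$ and $\g{g}_{2\alpha}$) and comparing the $\g{a}$-, $\g{g}_\alpha$- and $\g{g}_{2\alpha}$-components forces $\Ad(q)X=X$, $\Ad(q)Y=Y$. Hence $[\g{q},X]=[\g{q},Y]=0$, and since $\g{k}_0$ acts $\C$-linearly on $\g{g}_\alpha$ also $[\g{q},JX]=[\g{q},JY]=0$, and $Q$ normalizes $\g{w}$ and $\g{g}_\alpha\ominus\g{w}$.

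Next I would read off bracket relations. Fix $T_0,T_Y\in\g{h}_{\g{k}_0}$ with $T_0+B+X,\,T_Y+Y+Z\in\g{h}$, and for each $U\in\g{w}$ an $S_U\in\g{h}_{\g{k}_0}$ with $S_U+U\in\g{h}$. Expanding $[T_0+B+X,T_Y+Y+Z]$, $[T_0+B+X,S_U+U]$ and $[T_Y+Y+Z,S_U+U]$ with the Heisenberg bracket of $\g{a}\oplus\g{n}$ and projecting onto $\g{a}\oplus\g{n}$, one observes that each such bracket has vanishing $\g{a}$-component, so its image in $\g{h}_{\g{a}\oplus\g{n}}$ has no $(B+X)$-summand; reading off the $\g{g}_{2\alpha}$- and then the $\g{g}_\alpha$-parts yields relations such as $[T_0,Y]-[T_Y,X]-\bigl(\tfrac12+\tfrac12\langle JX,Y\rangle\bigr)Y\in\g{w}$ and the analogous ones involving $S_U$. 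These allow one to adjust $T_0,T_Y,S_U$ modulo~$\g{q}$ so that the relevant brackets with $Y$ lie in prescribed subspaces.

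The final step invokes Lemma~\ref{lemma:isotropy} with $g_\lambda=\Exp(\lambda Y)$, $Y\in\g{g}_\alpha$: the isotropy algebra at $g_\lambda(o)$ is $\g{h}\cap\Ad(g_\lambda)\g{k}=\g{q}\cap\ker\ad(Y)=\g{q}$, the last equality by $[\g{q},Y]=0$. Arguing as in the proof of Lemma~\ref{lemma:normalizes}, I would assume for contradiction that $X$ has a nonzero component orthogonal to $\spann_\R\{Y,JY\}$ in~$\g{g}_\alpha$, or that $\langle X,JY\rangle\neq2$, and from $T_0+B+X$ and the $S_U+U$ (using the relations above to cancel unwanted terms) build $R\in\g{h}_{\g{k}_0}$ with $[Y,R]\neq0$ for which $\Ad(g_\lambda)R=R-\tfrac1\lambda[Y,R]+\tfrac1{2\lambda^2}[Y,[Y,R]]$ lies in $\g{h}$ for a suitable $\lambda\neq0$; since $[Y,R]\neq0$ this element is not in $\g{k}_0$, hence not in~$\g{q}$, contradicting the isotropy computation. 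Ruling this out forces $X\in\spann_\R\{Y,JY\}$ with $\langle X,JY\rangle=2$, so that with $\gamma=\langle X,Y\rangle/\lVert Y\rVert^2$ one gets $X=\gamma Y+\frac2{\lVert Y\rVert^2}JY$; the constant $2$ comes from $\langle Z,Z\rangle=2$ together with the factors $\tfrac12$ in the Heisenberg bracket and in $2iB=(1-\theta)Z$. The main obstacle is precisely this construction: choosing $R$ and $\lambda$ so that $\Ad(g_\lambda)R$ actually falls into $\g{h}$, tracking the projections onto $\g{k}_0,\g{a},\g{g}_\alpha,\g{g}_{2\alpha}$ sharply enough to pin the coefficient of $JY$ to exactly $\tfrac2{\lVert Y\rVert^2}$ and not some other value, while handling the subspace $\g{w}$ and the failure of $\g{w}$ and $\g{g}_\alpha\ominus\g{w}$ to be $J$-invariant. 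Alternatively, the scalar identity $\langle X,JY\rangle=2$ can be obtained from Proposition~\ref{prop:criterion2} together with the total reality of sections (Proposition~\ref{prop:realSection}), by computing brackets of tangent vectors to a section through~$o$ and pairing them against~$\g{h}$.
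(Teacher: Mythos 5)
Your preliminary observations are sound: the $\Ad(Q)$-invariance of $\nu_o(H\cdot o)$ together with $\Ad(Q)\g{w}=\g{w}$ does give $[\g{q},X]=[\g{q},Y]=0$, and the relation you extract from $[T_0+B+X,T_Y+Y+Z]\in\g{h}$ is correct. But the argument stops exactly where the lemma lives. The two facts that constitute the statement --- the scalar identity $\langle JX,Y\rangle=-2$ (equivalently $\langle X,JY\rangle=2$) and the vanishing of the component of $X$ orthogonal to $\R Y\oplus\R JY$ --- are never derived; you defer both to the construction of an element $R\in\g{h}_{\g{k}_0}$ with $[Y,R]\neq0$ and $\Ad(\Exp(\lambda Y))R\in\g{h}$, which you yourself label ``the main obstacle'' without producing it. Since $[\g{q},Y]=0$, Lemma~\ref{lemma:isotropy} only tells you that $\g{h}\cap\Ad(\Exp(\lambda Y))\g{k}=\g{q}$; to reach a contradiction you would still have to exhibit such an $R$, and your bracket relations do not set up the eigenvector-type structure that makes the analogous construction succeed in Lemma~\ref{lemma:normalizes}. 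As proposed, this is a plan, not a proof.

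The paper's route is different and purely computational, and your closing ``alternative'' sentence points at it without executing it. After ruling out $X\in\R Y$ (if $X=\lambda Y$, projecting $[T+B+\lambda Y,S+Y+Z]$ onto $\g{a}\oplus\g{n}$ forces $\frac{1}{2}Y+Z\in\g{h}_{\g{a}\oplus\g{n}}$, which is impossible), one writes down the two distinguished vectors of $T_o\Sigma$ lying over $B+X$ and $Y+Z$. Total reality of the section gives $\lVert X\rVert^2\lVert Y\rVert^2-\langle X,Y\rangle^2+2\langle JX,Y\rangle=0$; the condition $\langle\g{h},[T_o\Sigma,T_o\Sigma]\rangle=0$ applied to the bracket of these two vectors, paired with $S+Y+Z$, yields $\langle[S,X],Y\rangle=-\frac{1}{4}\lVert Y\rVert^2\langle JX,Y\rangle$, and pairing $[T+B+X,S+Y+Z]\in\g{h}$ against the second section vector then forces $\langle JX,Y\rangle=-2$. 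Substituting back turns the first identity into the equality case $\lVert X\rVert^2\lVert Y\rVert^2-\langle X,Y\rangle^2=\langle JX,Y\rangle^2$, so writing $X=\gamma Y+\delta JY+E$ with $E\perp\C Y$ gives $\lVert E\rVert^2\lVert Y\rVert^2=0$, hence $E=0$ and $\delta=2/\lVert Y\rVert^2$. Both computations are essential and neither appears in your proposal, so there is a genuine gap.
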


\begin{proof}
Assume that $X$ and $Y$ are linearly dependent, that is, $X=\lambda
Y$, with $\lambda\in\R$. Then, $\g{h}_{\g{a}\oplus\g{n}}=\R(B+\lambda
Y)\oplus\g{w}\oplus\R(Y+Z)$, and there exist $T$,
$S\in\g{h}_{\g{k}_0}$ such that $T+B+\lambda Y$, $S+Y+Z\in\g{h}$.
Then,
\[
[T,S]+[T,Y]-\lambda[S,Y]+\frac{1}{2}Y+Z=[T+B+\lambda Y,S+Y+Z]\in\g{h}.
\]
Since $[T,Y]-\lambda[S,Y]\in\g{g}_\alpha\ominus\R Y$ by the
skew-symmetry of the elements of $\ad(\g{k}_0)$, we get
$\frac{1}{2}Y+Z\in\g{h}_{\g{a}\oplus\g{n}}$, which is not possible.

Therefore, we can assume that $X$ and $Y$ are linearly independent
vectors of $\g{g}_\alpha$. In particular, $X\neq 0$. Take and fix for
the rest of the calculations $T$, $S\in\g{h}_{\g{k}_0}$ such that
$T+B+X$, $S+Y+Z\in\g{h}$.

In this case, the normal space to the orbit through the origin $o$
can be written as
\begin{align*}
\nu_o(H\cdot o)={}
&\R(-\lVert X\rVert^2B+(1-\theta)X-\frac{1}{2}\langle X,Y\rangle(1-\theta)Z)
\oplus(\g{p}_\alpha\ominus(1-\theta)(\g{w}\oplus\R X\oplus\R Y))\\[-1ex]
&\oplus\R(-\langle X,Y\rangle B+(1-\theta)Y-\frac{1}{2}\lVert Y\rVert^2(1-\theta)Z).
\end{align*}
Let $\Sigma$ be a section of the action of $H$ on $\C H^n$ through
the point $o\in\C H^n$. By Proposition~\ref{prop:criterion2} we have
$\nu_o(H\cdot o)=T_o\Sigma\oplus[\g{q},T_o\Sigma]$ (orthogonal direct
sum).  In particular the vectors $T+B+X$ and $S+Y+Z$ are orthogonal
to $[\g{q},T_o\Sigma]\subset\g{p}_\alpha$ (because
$[\g{k}_0,\g{a}]=[\g{k}_0,\g{g}_{2\alpha}]=0$). This implies that $X$
and $Y$ are already orthogonal to $[\g{q},T_o\Sigma]$, and thus, so
are $-\lVert X\rVert^2B+(1-\theta)X-\frac{1}{2}\langle
X,Y\rangle(1-\theta)Z$ and $-\langle X,Y\rangle
B+(1-\theta)Y-\frac{1}{2}\lVert Y\rVert^2(1-\theta)Z$. Hence, they
are in $T_o\Sigma$ and we can write
\begin{align*}
T_o\Sigma={}
&\R(-\lVert X\rVert^2B+(1-\theta)X-\frac{1}{2}\langle
X,Y\rangle(1-\theta)Z)\\[-1ex]
&\oplus(1-\theta)\g{s} \oplus\R(-\langle
X,Y\rangle B+(1-\theta)Y-\frac{1}{2}\lVert Y\rVert^2(1-\theta)Z),
\end{align*}
where $\g{s}\subset\g{g}_\alpha\ominus\g{w}$ is totally real, and $\C X\oplus\C Y$
is orthogonal to $\g{s}$ (because sections are totally real). The
fact that $T_o\Sigma$ is totally real also implies
\begin{equation}\label{eq:real condition}
\begin{aligned}
0&{}=
\langle i(-\lVert X\rVert^2B+(1-\theta)(X-\frac{1}{2}\langle
X,Y\rangle Z)),-\langle
X,Y\rangle B+(1-\theta)(Y-\frac{1}{2}\lVert Y\rVert^2 Z)\rangle\\[-1ex]
&{}=\langle (1-\theta)(-\frac{1}{2}\lVert X\rVert^2 Z+JX)+\langle
X,Y\rangle B),-\langle
X,Y\rangle B+(1-\theta)(Y-\frac{1}{2}\lVert Y\rVert^2 Z)\rangle\\
&{}=\lVert X\rVert^2\lVert Y\rVert^2-\langle X,Y\rangle^2+2\langle JX,Y\rangle.
\end{aligned}
\end{equation}

Now, using Lemma~\ref{lemma:aux}(\ref{lemma:aux:a}), and
\eqref{eq:real condition}, we compute
\begin{align*}
&[-\lVert X\rVert^2B+(1-\theta)(X-\frac{1}{2}\langle
X,Y\rangle Z),-\langle
X,Y\rangle B+(1-\theta)(Y-\frac{1}{2}\lVert Y\rVert^2 Z)]\\[-1ex]
&\qquad=\frac{1}{2}(1+\theta)\bigl(-2[\theta X,Y]+\langle X,Y \rangle X
-\lVert X\rVert^2 Y-\lVert Y\rVert^2 JX
+\langle X,Y\rangle JY-\langle JX,Y\rangle Z\bigr).
\end{align*}
This vector is in $[T_o\Sigma,T_o\Sigma]$, which is orthogonal to
$\g{h}$ by Proposition~\ref{prop:criterion2}, so taking inner product
with $S+Y+Z$, and using Lemma~\ref{lemma:aux}(\ref{lemma:aux:b}) and
\eqref{eq:real condition}, we get $0=-2\langle
[S,X],Y\rangle-\frac{1}{2}\lVert Y\rVert^2\langle JX,Y\rangle$, which
implies
\begin{equation}\label{eq:sxy}
\langle [S,X],Y\rangle=-\frac{1}{4}\lVert Y\rVert^2\langle JX,Y\rangle.
\end{equation}

We also have
\[
[T+B+X,S+Y+Z]=[T,S]+[T,Y]-[S,X]+\frac{1}{2}Y+\bigl(1+\frac{1}{2}\langle JX,Y\rangle\bigr)Z,
\]
which is in $\g{h}$, so taking inner product with $-\langle
X,Y\rangle B+(1-\theta)(Y-\frac{1}{2}\lVert Y\rVert^2 Z)$, and
using~\eqref{eq:sxy}, we obtain
\[
0=-\langle [S,X],Y\rangle+\frac{1}{2}\lVert Y\rVert^2
-\lVert Y\rVert^2\bigl(1+\frac{1}{2}\langle JX,Y\rangle\bigr)
=-\frac{1}{2}\lVert Y\rVert^2\bigl(1+\frac{1}{2}\langle JX,Y\rangle\bigr).
\]
Since $Y\neq 0$, we get $\langle JX,Y\rangle=-2$ and
thus~\eqref{eq:real condition} can be written as
\[
\lVert X\rVert^2\lVert Y\rVert^2-\langle X,Y\rangle^2=4=\langle JX,Y\rangle^2.
\]
Now put $X=\gamma Y+\delta JY+E$ with $E$ orthogonal to $\C Y$,  and
$\gamma,\delta\in\R$. Then, the previous equation reads $\lVert
E\rVert^2\lVert Y\rVert^2=0$, which yields $E=0$. This implies the
result.
\end{proof}

Therefore the situation now is $\g{h}_{\g{a}\oplus\g{n}}=\R(B+\gamma
Y+\frac{2}{\lVert Y\rVert^2}JY)\oplus\g{w}\oplus\R(Y+Z)$, with  $\C
Y\subset\g{g}_\alpha\ominus\g{w}$. The normal space can be rewritten
as
\begin{align*}
\nu_o(H\cdot o)={}
&\R(-2B+(1-\theta)JY)
\oplus(\g{p}_\alpha\ominus(1-\theta)(\g{w}\oplus\C Y))\\[-1ex]
&\oplus\R(-\gamma\lVert Y\rVert^2 B+(1-\theta)Y-\frac{1}{2}\lVert Y\rVert^2(1-\theta)Z),
\end{align*}
and arguing as above, if $\Sigma$ is a section through $o$, then
\begin{equation}\label{eq:section}
T_o\Sigma=\R(-2B+(1-\theta)JY)
\oplus(1-\theta)\g{s} \oplus
\R(-\gamma\lVert Y\rVert^2 B+(1-\theta)Y-\frac{1}{2}\lVert Y\rVert^2(1-\theta)Z),
\end{equation}
where $\g{s}\subset\g{g}_\alpha\ominus(\g{w}\oplus\C Y)$ is a totally real
subspace of $\g{g}_\alpha$.

\begin{lemma}\label{lemma:sjy}
If $S\in\g{h}_{\g{k}_0}$ is such that $S+Y+Z\in\g{h}$ then
$[S,JY]=\frac{1}{4}\lVert Y\rVert^2 Y$.
\end{lemma}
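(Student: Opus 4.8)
The plan is to combine the polarity criterion of Proposition~\ref{prop:criterion2} (in particular $\langle\g{h},[T_o\Sigma,T_o\Sigma]\rangle=0$ and $\Ad(H_o)T_o\Sigma=\nu_o(H\cdot o)$) with explicit bracket computations inside $\g{h}$ and along the section $(\ref{eq:section})$. Write $c=\lVert Y\rVert^2$, put $v_1=-2B+(1-\theta)JY$ and $v_2=-\gamma c B+(1-\theta)Y-\tfrac{c}{2}(1-\theta)Z$ for the two distinguished vectors of $(\ref{eq:section})$, and fix $T\in\g{h}_{\g{k}_0}$ with $T+B+\gamma Y+\tfrac{2}{c}JY\in\g{h}$ and $S\in\g{h}_{\g{k}_0}$ with $S+Y+Z\in\g{h}$. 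Recall that $\C Y\subset\g{g}_\alpha\ominus\g{w}$, so $Y,JY\perp\g{w}$.

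First I would produce a linear identity for $[S,JY]$. Expanding $[\,T+B+\gamma Y+\tfrac{2}{c}JY,\ S+Y+Z\,]$ with the bracket formula of Subsection~\ref{subsec:CHn}, the $\g{g}_{2\alpha}$-contributions ($[B,Z]$ and $[\tfrac{2}{c}JY,Y]$) cancel, so this bracket equals $[T,S]\in\g{k}_0$ plus the $\g{g}_\alpha$-vector $[T,Y]-\gamma[S,Y]-\tfrac{2}{c}[S,JY]+\tfrac12 Y$. As this bracket lies in $\g{h}$, its $\g{a}\oplus\g{n}$-projection lies in $\g{h}_{\g{a}\oplus\g{n}}=\R(B+\gamma Y+\tfrac{2}{c}JY)\oplus\g{w}\oplus\R(Y+Z)$, and being a pure $\g{g}_\alpha$-vector it therefore lies in $\g{w}$. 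On the other hand, by Lemma~\ref{lemma:normalizes2} we have $[\g{h}_{\g{k}_0},\g{g}_\alpha\ominus\g{w}]\subset\g{g}_\alpha\ominus\g{w}$, so that vector lies in $\g{g}_\alpha\ominus\g{w}$ as well; hence it vanishes, giving
\begin{equation}\label{eq:starSJY}
[S,JY]=\tfrac{c}{2}\bigl([T,Y]-\gamma[S,Y]\bigr)+\tfrac{c}{4}\,Y.
\end{equation}

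Next I would show $[S,JY]\in\C Y$. For a totally real subspace $\g{s}$ as in $(\ref{eq:section})$ and $\eta\in\g{s}$ one has $\eta\perp\C Y$, and expanding with Lemma~\ref{lemma:aux}\eqref{lemma:aux:a} yields $[v_1,(1-\theta)\eta]=-(1+\theta)\eta-(1+\theta)[JY,\theta\eta]\in[T_o\Sigma,T_o\Sigma]$. Pairing this with $S+Y+Z\in\g{h}$, every term vanishes by root-space orthogonality except the one governed by Lemma~\ref{lemma:aux}\eqref{lemma:aux:b}, which gives $0=2\langle[S,\eta],JY\rangle$; by skew-symmetry of $\ad(S)$ this reads $\langle\eta,[S,JY]\rangle=0$. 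Since $Q=H_o$ fixes $v_1$ and $v_2$ (they lie in every section through $o$) and preserves $\g{g}_\alpha\ominus(\g{w}\oplus\C Y)$, the equality $\Ad(Q)T_o\Sigma=\nu_o(H\cdot o)$ of Proposition~\ref{prop:criterion2} shows that the $Q$-translates of $\g{s}$ span $\g{g}_\alpha\ominus(\g{w}\oplus\C Y)$; running the above for each translate gives $[S,JY]\perp\g{g}_\alpha\ominus(\g{w}\oplus\C Y)$, hence $[S,JY]\in\g{w}\oplus\C Y$. Together with $[S,JY]\in\g{g}_\alpha\ominus\g{w}$ (Lemma~\ref{lemma:normalizes2}, since $JY\perp\g{w}$) this yields $[S,JY]\in\C Y$.

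Finally I would pin down the scalar. As $\ad(S)$ is skew-symmetric, $\langle[S,JY],JY\rangle=0$, so $[S,JY]=aY$ for some $a\in\R$. Substituting into $(\ref{eq:starSJY})$ forces $[T,Y]-\gamma[S,Y]$ to be a real multiple of $Y$; but $\langle[T,Y],Y\rangle=\langle[S,Y],Y\rangle=0$ by skew-symmetry, so $[T,Y]-\gamma[S,Y]=0$, and therefore $a=\tfrac{c}{4}$, i.e.\ $[S,JY]=\tfrac14\lVert Y\rVert^2 Y$. (Alternatively, $a$ can be read off directly by computing $[v_1,v_2]$, whose $\g{k}_0$-component equals $-2\,[JY,\theta Y]_{\g{k}_0}$, and pairing it with $S+Y+Z$ via Lemma~\ref{lemma:aux}\eqref{lemma:aux:b}.) I expect the step showing $[S,JY]\in\C Y$ to be the main obstacle: one must carry out the expansion of $[v_1,(1-\theta)\eta]$ carefully and then argue that sweeping the section over $\Ad(Q)$ recovers all of $\g{g}_\alpha\ominus(\g{w}\oplus\C Y)$; everything else is linear algebra built on the structural lemmas already available.
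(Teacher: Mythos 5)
Your skeleton is the right one, and two of its three steps essentially reproduce the paper's argument: the expansion of $[v_1,(1-\theta)\eta]$ together with the $\Ad(Q)$-sweep of the section to get $[S,JY]\perp\g{g}_\alpha\ominus(\g{w}\oplus\C Y)$ is exactly the paper's computation, and extracting the scalar $\tfrac14\lVert Y\rVert^2$ from your linear identity for $[S,JY]$ (rather than by pairing $[v_1,v_2]$ with $S+Y+Z$, which is what the paper does) is a correct variant. The gap lies in both of your appeals to Lemma~\ref{lemma:normalizes2}. That lemma is one of the preparatory steps for Proposition~\ref{prop:orbit equivalent} and is proved only under that proposition's standing hypothesis $\g{h}_{\g{a}\oplus\g{n}}=\g{b}\oplus\g{w}\oplus\g{r}$ with $\g{b}\subset\g{a}$, $\g{r}\subset\g{g}_{2\alpha}$: its proof begins by writing a generic element of $\g{h}$ as $T+aB+U+xZ$ with $U\in\g{w}$. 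In Case~5, where Lemma~\ref{lemma:sjy} is invoked, the standing hypothesis is instead $\g{h}_{\g{a}\oplus\g{n}}=\R(B+\gamma Y+\tfrac{2}{\lVert Y\rVert^2}JY)\oplus\g{w}\oplus\R(Y+Z)$ with $Y\neq 0$, so neither Lemma~\ref{lemma:normalizes2} nor the inclusion $[\g{h}_{\g{k}_0},\g{g}_\alpha\ominus\g{w}]\subset\g{g}_\alpha\ominus\g{w}$ is available to you.

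Nor can this be patched by pure algebra. Writing $c=\lVert Y\rVert^2$, your identity $[S,JY]=\tfrac{c}{2}([T,Y]-\gamma[S,Y])+\tfrac{c}{4}Y$ requires the vector $[T,Y]-\gamma[S,Y]-\tfrac{2}{c}[S,JY]+\tfrac12 Y$ to be orthogonal to $\g{w}$, i.e.\ $\langle[T,U],Y\rangle=\langle[S,U],Y\rangle=\langle[S,U],JY\rangle=0$ for all $U\in\g{w}$. The second of these does follow from the subalgebra property alone (the $\g{a}\oplus\g{n}$-projection of $[S+Y+Z,T_U+U]$ equals $[S,U]-[T_U,Y]$ and is forced into $\g{w}$), but the other two need the polarity criterion: the paper pairs the bracket \eqref{eq:bracket} of the two distinguished section vectors with $T_U+U$ to obtain $\langle[T_U,JY],Y\rangle=0$ (whence $\langle[T,U],Y\rangle=0$ via the analogous subalgebra identity), and then pairs $[S+Y+Z,T_U+U]\in\g{h}$ with $-2B+(1-\theta)JY\in\nu_o(H\cdot o)$ to obtain $\langle[S,U],JY\rangle=0$, i.e.\ $[S,JY]\perp\g{w}$. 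These two polarity-based inputs are absent from your proposal, and the latter one is also exactly what you would need to upgrade $[S,JY]\perp\g{g}_\alpha\ominus(\g{w}\oplus\C Y)$ to $[S,JY]\in\C Y$ in your second step. Once they are supplied, the remainder of your argument goes through.
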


\begin{proof}
First of all, by the properties of root systems and the skew-symmetry
of the elements of $\ad(\g{k}_0)$, we have
$[S,JY]\in\g{g}_\alpha\ominus\R JY$.

Lemma~\ref{lemma:aux}(\ref{lemma:aux:a}) yields
\begin{equation}\label{eq:bracket}
\begin{aligned}
&[-2B+(1-\theta)JY,-\gamma\lVert Y\rVert^2 B
+(1-\theta)(Y-\frac{1}{2}\lVert Y\rVert^2 Z)]\\[-1ex]
&\qquad=(1+\theta)\Bigl(-[\theta JY,Y]
+\Bigl(\frac{1}{2}\lVert Y\rVert^2-1\Bigr)Y+\frac{\gamma}{2}\lVert Y\rVert^2 JY
+\frac{1}{2}\lVert Y\rVert^2 Z\Bigr),
\end{aligned}
\end{equation}
which is a vector in $[T_o\Sigma,T_o\Sigma]$.

Take $U\in\g{w}$, and let $T_U\in\g{h}_{\g{k}_0}$ be such that
$T_U+U\in\g{h}$. Taking inner product with~\eqref{eq:bracket} and
using Lemma~\ref{lemma:aux}(\ref{lemma:aux:b}) we get $0=2\langle
[T_U,JY],Y\rangle$. Using this equality and since $\g{h}$ is a Lie subalgebra, we now have
\[
0=\langle [S+Y+Z,T_U+U],-2B+(1-\theta)JY\rangle
=\langle [S,T_U]+[S,U]-[T_U,Y],JY\rangle
=\langle[S,U],JY\rangle,
\]
and since $U\in\g{w}$ is arbitrary,
$[S,JY]\in\g{g}_\alpha\ominus(\g{w}\oplus\R JY)$.

Let $\xi\in\g{s}$. Proposition~\ref{prop:criterion2} implies
\[
0=\langle S+Y+Z,[-2B+(1-\theta)JY,(1-\theta)\xi]\rangle
=-\langle S,(1+\theta)[\theta JY,\xi]\rangle
=-2\langle[S,JY],\xi\rangle.
\]
Let $\eta\in\g{g}_\alpha\ominus(\g{w}\oplus\C Y)$ be an arbitrary vector.
Since $\Ad(Q)(T_o\Sigma)=\nu_o(H\cdot o)$ by
Proposition~\ref{prop:criterion2}, we can conjugate the section
$\Sigma$ in such a way that $\eta\in\g{s}$. (Note that
$-2B+(1-\theta)JY$ and $-\gamma\lVert Y\rVert^2
B+(1-\theta)Y-\frac{1}{2}\lVert Y\rVert^2(1-\theta)Z$ always belong
to $T_o\Sigma$ by~\eqref{eq:section}.) Hence, the equation above
shows that $[S,JY]$ is orthogonal to
$\g{g}_\alpha\ominus(\g{w}\oplus\C Y)$. Altogether this implies
$[S,JY]\in\R Y$.

Finally, taking inner product of~\eqref{eq:bracket} with
$S+Y+Z\in\g{h}$ we get, using
Lemma~\ref{lemma:aux}(\ref{lemma:aux:a}),
$0=2\langle[S,Y],JY\rangle+\frac{1}{2}\lVert Y\rVert^4$, and hence
$[S,JY]=\frac{1}{4}\lVert Y\rVert^2 Y$ as we wanted.
\end{proof}

We define $g=\Exp(-4JY/\lVert Y\rVert^2)$. Recall that the Lie algebra
of the isotropy group of $H$ at $g(o)$ is
$\g{h}_{g(o)}=\Ad(g)(\g{k})\cap\g{h}=\g{q}\cap\ker\ad(JY)$, according
to Lemma~\ref{lemma:isotropy}. Let
$S\in\g{h}_{\g{k}_0}$ be such that $S+Y+Z\in\g{h}$. Then,
Lemma~\ref{lemma:sjy} yields
\[
\Ad(g)(S)=S-\frac{4}{\lVert Y\rVert^2}[JY,S]+\frac{8}{\lVert Y\rVert^4}[JY,[JY,S]]
=S+Y+Z\in\Ad(g)(\g{k})\cap\g{h}.
\]
However, it is clear that $S+Y+Z\not\in\g{q}\cap\ker\ad(JY)$, which
gives a contradiction.

Therefore we have proved that $Y=0$. Thus
$\g{h}_{\g{a}\oplus\g{n}}=\R(B+X)\oplus\g{w}\oplus\g{g}_{2\alpha}$.
If $X=0$ then
$\g{h}_{\g{a}\oplus\g{n}}=\g{a}\oplus\g{w}\oplus\g{g}_{2\alpha}$, and
we are under the hypotheses of Proposition~\ref{prop:orbit
equivalent}, which implies that the action of $H$ is orbit equivalent
to the action of the group $\hat{H}$ whose Lie algebra is
$\hat{\g{h}}=\g{q}\oplus\g{a}\oplus\g{w}\oplus\g{g}_{2\alpha}$. This
corresponds to Theorem~\ref{th:Parabolic}(\ref{th:Parabolic:qawg}).

For the rest of this case we assume $X\neq 0$. Note that the normal
space to  the orbit through $o$ is $\nu_o(H\cdot o)=\R(-\lVert
X\rVert^2B+(1-\theta)X)\oplus(\g{p}_\alpha\ominus(1-\theta)(\g{w}\oplus\R
X))$. If $\Sigma$ is a section through $o$, since $\nu_o(H\cdot
o)=T_o\Sigma\oplus[\g{q},T_o\Sigma]$ (orthogonal direct sum), and
$[\g{q},T_o\Sigma]\subset\g{p}_\alpha$, it is easy to deduce, as in
previous cases, that
\[
T_o\Sigma=\R(-\lVert X\rVert^2B+(1-\theta)X)\oplus(1-\theta)\g{s},
\]
where $\R X\oplus\g{s}$ is a real subspace of $\g{g}_\alpha$.

We define $g=\Exp(2X)$. We will show
$(\Ad(g)(\g{h}))_{\g{a}\oplus\g{n}}=\g{a}\oplus\g{w}\oplus\g{g}_{2\alpha}$
and $\Ad(g)(\g{q})=\g{q}$, which will allow us to apply
Proposition~\ref{prop:orbit equivalent}. From now on we take
$T\in\g{h}_{\g{k}_0}$ such that $T+B+X\in\g{h}$.

Let $S\in\g{q}$. Then $[S,T]+[S,X]=[S,T+B+X]\in\g{h}$, and thus
$[S,X]\in\g{w}$.  Now let $U\in\g{w}$ be an arbitrary vector, and let
$S_U\in\g{h}_{\g{k}_0}$ such that $S_U+U\in\g{h}$. We have
$0=\langle[S,S_U+U],-\lVert
X\rVert^2B+(1-\theta)X\rangle=-\langle[S,X],U\rangle$, which together
with the previous assertion implies $[S,X]=0$. Then
$\Ad(g)(\g{q})=\g{q}$. In particular this implies that $Q$ is a
maximal compact subgroup of $I_g(H)=gHg^{-1}$.

Now we calculate $[T,X]$. Let $U\in\g{w}$ and $S_U\in\g{h}_{\g{k}_0}$
such that $S_U+U\in\g{h}$. Then, by the skew-symmetry of the elements
of $\ad(\g{k}_0)$ we have $0=\langle [T+B+X,S_U+U],-\lVert
X\rVert^2B+(1-\theta)X\rangle=-\langle[T,X],U\rangle$, so
$[T,X]\in\g{g}_\alpha\ominus\g{w}$. Let now $\xi\in\g{s}$. By
Proposition~\ref{prop:criterion2} we get, using
Lemma~\ref{lemma:aux}(\ref{lemma:aux:b}), $0=\langle T+B+X,[-\lVert
X\rVert^2B+(1-\theta)X,(1-\theta)\xi]\rangle=-\langle
T,(1+\theta)[\theta X,\xi]\rangle=-2\langle[T,X],\xi\rangle$. Using
again Proposition~\ref{prop:criterion2} we have $\nu_o(H\cdot
o)=\Ad(Q)(T_o\Sigma)$, and thus, for any
$\eta\in\g{g}_\alpha\ominus(\g{w}\oplus\R X)$ we can find a section
through $o$ such that $(1-\theta)\eta\in T_o\Sigma$ (note that
$-\lVert X\rVert^2B+(1-\theta)X\in T_o\Sigma$ for any section). Hence
the previous argument shows $\langle[T,X],\eta\rangle=0$, and
altogether this means $[T,X]=0$. Therefore, $\Ad(g)(T+B+X)=T+B$, so
the projection of this vector onto $\g{a}\oplus\g{n}$ is in
$\g{a}\subset\g{a}\oplus\g{w}\oplus\g{g}_{2\alpha}$.

Fix $U\in\g{w}$ and $S_U\in\g{h}_{\g{k}_0}$ such that
$S_U+U\in\g{h}$.  We calculate $[S_U,X]$. For any $\xi\in\g{s}$, by
Proposition~\ref{prop:criterion2} and
Lemma~\ref{lemma:aux}(\ref{lemma:aux:b}), we get $0=\langle
S_U+U,[-\lVert
X\rVert^2B+(1-\theta)X,(1-\theta)\xi]\rangle=-2\langle[S_U,X],\xi\rangle$.
As in the previous paragraph, one can argue that $\xi$ can be taken
arbitrarily in $\g{g}_\alpha\ominus(\g{w}\oplus\R X)$ by changing the
tangent space to the section, if necessary, by an element of
$\Ad(Q)$. Hence $[S_U,X]\in\g{w}$, which yields
$\Ad(g)(S_U+U)=S_U+U-2[S_U,X]+\frac{1}{2}(\langle JX, U\rangle -
2\langle JX,[S_U,X]\rangle)Z$,  and thus, its projection onto
$\g{a}\oplus\g{n}$ belongs to
$\g{a}\oplus\g{w}\oplus\g{g}_{2\alpha}$.

Finally, let $S_Z\in\g{h}_{\g{k}_0}$ such that $S_Z+Z\in\g{h}$. For
each $\xi\in\g{s}$ we obtain $0=\langle S_Z+Z,[-\lVert
X\rVert^2B+(1-\theta)X,(1-\theta)\xi]\rangle=-2\langle[S_Z,X],\xi\rangle$,
and since $\xi$ can be taken to be in
$\g{g}_\alpha\ominus(\g{w}\oplus\R X)$ by a suitable conjugation of
the section by an element in $\Ad(Q)$, we deduce $[S_Z,X]\in\g{w}$. Hence,
$\Ad(g)(S_Z+Z)=S_Z-2[S_Z,X]+(1-\langle JX,[S_Z,X]\rangle)Z$, and the
orthogonal  projection of this vector onto $\g{a}\oplus\g{n}$ belongs
to $\g{a}\oplus\g{w}\oplus\g{g}_{2\alpha}$.

These last calculations show that
$(\Ad(g)(\g{h}))_{\g{a}\oplus\g{n}}\subset\g{a}\oplus\g{w}\oplus\g{g}_{2\alpha}$.
Since $g\in AN$ normalizes $\g{k}_0\oplus\g{a}\oplus\g{n}$, we have
that  $\Ad(g)(\g{h})\subset\g{k}_0\oplus\g{a}\oplus\g{n}$. Then the
kernel of the projection of $\Ad(g)(\g{h})$ onto $\g{a}\oplus\g{n}$
is precisely $\Ad(g)(\g{h})\cap \g{k}_0$, which is a compact
subalgebra of $\Ad(g)(\g{h})$ containing $\g{q}=\Ad(g)(\g{q})$. By
the maximality of $\g{q}$ we get that $\Ad(g)(\g{h})\cap
\g{k}_0=\g{q}$. But then by elementary linear algebra
\begin{align*}
\dim (\Ad(g)\g{h})_{\g{a}\oplus\g{n}}
&{}=\dim\Ad(g)(\g{h})-\dim(\Ad(g)(\g{h})\cap\g{k}_0)\\
&{}=\dim\g{h}-\dim \g{q}
=\dim\g{h}_{\g{a}\oplus\g{n}}=\dim (\g{a}\oplus\g{w}\oplus\g{g}_{2\alpha}).
\end{align*}

All in all we have shown that the Lie algebra $\Ad(g)(\g{h})$ of
$I_g(H)=gHg^{-1}$ satisfies
$(\Ad(g)(\g{h}))_{\g{a}\oplus\g{n}}=\g{a}\oplus\g{w}\oplus\g{g}_{2\alpha}$,
and that $Q$ is a maximal compact subgroup of $I_g(H)$. Therefore, we
can apply Proposition~\ref{prop:orbit equivalent} to $I_g(H)$. This
implies that the action of $H$ on $\C H^n$ is orbit equivalent to
the action of the group $\hat{H}$ whose Lie algebra is
$\hat{\g{h}}=\g{q}\oplus\g{a}\oplus\g{w}\oplus\g{g}_{2\alpha}$. This
corresponds to Theorem~\ref{th:Parabolic}(\ref{th:Parabolic:qawg}).

Altogether, we have concluded the proof of Theorem~\ref{th:Parabolic}.


\section{Proof of the main results}\label{sec:proof}

In this section we conclude the proof of
Theorems~A and~B
using the results of Sections~\ref{sec:totally_geodesic}
and~\ref{sec:parabolic}.

\begin{proof}[Proof of Theorem~A]
The actions described in part~\eqref{th:main:so} are polar by virtue
of Theorem~\ref{theorem:ProdAct} and Theorem~\ref{th:SubSO}, whereas
the polarity of the actions in part~\eqref{th:main:parabolic} follows
from Theorem~\ref{th:with_a_general}.

An action of a subgroup $H$ of the isometry group $I(M)$ of a
Riemannian manifold~$M$ is proper if and only if $H$ is a closed
subgroup of~$I(M)$. Hence we may assume $H \subset SU(1,n)$ is
closed. Since the polarity of the action depends only on the Lie
algebra of~$H$ by Proposition~\ref{prop:criterion1}, we may assume
that $H$ is connected.

Thus, let $H$ be a connected closed subgroup of $SU(1,n)$ acting
polarly on $\C H^n$. Using the Karpelevich-Mostow Theorem~\cite{Ka53}, \cite{Mo55}, it was proved in~\cite[Theorem~6.2]{ADS03} that $H$ fixes a unique point in the ideal boundary of $\C H^n$, or $H$ has a totally geodesic orbit. The first possibility is equivalent to saying that $H$ is contained in a maximal parabolic subgroup of~$G$ (see for example~\cite[\S2.17]{E96}). In the second case, an orbit of $H$ is a totally geodesic $\C H^k$ or a totally geodesic $\R H^k$, $k\in\{0,\dots,n\}$.

Let $k$ be the smallest complex dimension of a totally geodesic
complex hyperbolic  subspace left invariant by the $H$-action. If
$k=0$, then the $H$-action has a fixed point. In this case, it
follows from~\cite{DK11} that $H$ is a subgroup of $S(U(1)U(n))\cong
U(n)$ that corresponds to a polar action on $\C P^{n-1}$, and
therefore is induced by the isotropy representation of a Hermitian
symmetric space. This corresponds to case~\eqref{th:main:so} with
$k=0$ in Theorem~A.

Let us assume from now on that $k\geq 1$. Theorem~\ref{theorem:ProdAct} guarantees that
the $H$-action is orbit equivalent to the product action of a closed
subgroup $H_1$ of $SU(1,k)$ acting polarly on $\C H^k$, times a closed
subgroup $H_2$ of $U(n-k)$ acting polarly (and with a fixed point)
on $\C H^{n-k}$. By assumption, the $H_1$-action on $\C H^k$ does not
leave any totally geodesic $\C H^{l}$ or $\R H^{l}$ with $l<k$
invariant. Hence, the $H_1$-action on $\C H^k$ is transitive, or has a totally geodesic $\R H^k$ as an orbit, or $H_1$ is contained
in a maximal parabolic subgroup of $SU(1,k)$. The first case corresponds to Theorem~A\eqref{th:main:parabolic} with $\g{q}=\g{h}_2$, $\g{b}=\g{a}$, and $\g{w}$ a complex subspace of $\g{g}_\alpha$ with dimension $k-1$. In the second case, we have shown in Theorem~\ref{th:SubSO} that the $H_1$-action on the totally geodesic $\C H^k$ is of cohomogeneity one and orbit equivalent to the action of $SO^0(1,k)$; thus, this
corresponds to part~\eqref{th:main:so} with $k\in\{1,\dots,n\}$. Note
that for $Q=H_2$, the $Q$-action on $\C H^{n-k}$ is determined by its
slice representation at the fixed point, so $Q$ acts polarly with a
totally real section on $T_o\C H^{n-k}\cong \C^{n-k}$.

Let us consider the final case, that is, $H_1$ is contained in a
maximal parabolic subgroup of $SU(1,k)$, $k\in\{1,\dots,n\}$. The Lie algebra $\g{su}(1,k)$ can be assumed to be of the form $\g{su}(1,k)=\g{k}^1\oplus\g{a}\oplus\g{g}_\alpha^1\oplus\g{g}_{2\alpha}$, where $\g{g}_\alpha^1$ is a
complex subspace of $\g{g}_\alpha$ with complex dimension $k-1$, and
$\g{k}^1=\g{k}\cap\g{su}(1,k)$. Indeed, $\g{k}^1\oplus\g{a}\oplus\g{g}_\alpha^1\oplus\g{g}_{2\alpha}$ is an Iwasawa decomposition of $\g{su}(1,k)\subset \g{su}(1,n)$. Now, a maximal parabolic subalgebra of $\g{su}(1,k)$ is of the form $\g{k}_0^1 \oplus \g{a} \oplus
\g{g}_\alpha^1\oplus\g{g}_{2\alpha}$, where $\g{k}_0^1\cong \g{u}(k-1)$ is the normalizer of $\g{a}$ in
$\g{k}^1$. Thus, assume that $\g{h}_1\subset\g{k}_0^1 \oplus \g{a} \oplus
\g{g}_\alpha^1\oplus\g{g}_{2\alpha}$. It follows that the $H_1$-action is orbit
equivalent by an element of $SU(1,k)$ to the action of a closed subgroup $H_1'$ of $SU(1,k)$ with one
of the Lie algebras described in Theorem~\ref{th:Parabolic}:
\eqref{th:Parabolic:qaw}~$\g{q}^1\oplus\g{a}$,
\eqref{th:Parabolic:qawg}
$\g{q}^1\oplus\g{a}\oplus\g{w}\oplus\g{g}_{2\alpha}$, or
\eqref{th:Parabolic:qwg} $\g{q}^1\oplus\g{w}\oplus\g{g}_{2\alpha}$,
where $\g{w}$ is a real subspace of $\g{g}_\alpha^1$, and
$\g{q}^1\subset\g{k}_0^1$ normalizes $\g{w}$. By conjugating by an element of $SU(1,k)$ we may assume that $H_1$ and $H_1'$ have the same orbits in $\C H^k$. Let $p\in\C H^k$ be an arbitrary point. Then, for each $h_1\in H_1$ there exists $h_1'\in H_1'$ such that $h_1(p)=h_1'(p)$, and thus, $h_1^{-1}h_1'$ is contained in the isotropy group of $SU(1,k)$ at $p$. This implies that for each $\eta\in\nu_p\C H^k$ we have $(h_1^{-1}h_1')_*\eta=\eta$. Then, for each $h_2\in H_2\subset U(n-k)$ we have that
\[
h_1h_2(\exp_p(\xi))=\exp_{h_1(p)}(h_{1*}h_{2*}\xi)=
\exp_{h_1'(p)}(h_{1*}'h_{2*}\xi)=h_1'h_2(\exp_p(\xi)),
\]
for each $p\in\C H^k$ and $\xi\in\nu_p\C H^k$. Hence, it follows that $H_1\times H_2$ and $H_1'\times H_2$ act on $\C H^n$ with the same orbits.

Finally we can define
$\g{q}=\g{q}^1\oplus\g{h}_2$, which is a subalgebra of $\g{k}_0$.
Therefore, case~\eqref{th:Parabolic:qaw} above corresponds to the Lie algebra $\g{q}\oplus\g{a}$, which is a
particular case of Theorem~A\eqref{th:main:so} for $k=1$, while
cases~\eqref{th:Parabolic:qawg} and \eqref{th:Parabolic:qwg} correspond to the Lie algebras $\g{q}\oplus\g{a}\oplus\g{w}\oplus\g{g}_{2\alpha}$ and $\g{q}\oplus\g{w}\oplus\g{g}_{2\alpha}$, which in turn correspond to
Theorem~A\eqref{th:main:parabolic}, where $\g{b}=\g{a}$ and
$\g{b}=0$, respectively. Lemma~\ref{lemma:equivariance},
Proposition~\ref{prop:realSection} and the fact that the slice
representation of a polar action is also polar, guarantee that the
action of $\g{q}$ on the orthogonal complement of $\g{w}$ in
$\g{g}_{\alpha}$ is polar with a totally real section.
\end{proof}

Before beginning the proof of Theorem~B, we need to calculate the
mean curvature vector of the orbits of minimum orbit type.

\begin{lemma}\label{lemma:meanCurvature}
Let $H$ be the connected Lie subgroup of $SU(1,n)$ whose Lie algebra
is $\g{h}=\R(aB+X)\oplus\g{w}\oplus\g{g}_{2\alpha}$, for some
$a\in\R$, $\g{w}$ subspace of $\g{g}_\alpha$, and
$X\in\g{g}_\alpha\ominus\g{w}$, $a\neq 0$, $X\neq 0$. Then, the mean
curvature vector of $H\cdot o$ is
\[
\mathcal{H}=\frac{3+\dim\g{w}}{2(a^2+\lVert X\rVert^2)}(\lVert X\rVert^2 B-a X).
\]
\end{lemma}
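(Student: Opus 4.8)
The plan is to compute the mean curvature vector of the orbit $N = H\cdot o$ directly from the shape operator, working in the model $AN \cong \C H^n$ with the metric $\langle\cdot,\cdot\rangle_{AN}$ and then transferring the result to $\g{p}$ via the isometry $\frac12(1-\theta)$ of Lemma~\ref{lemma:equivariance}. First I would identify the tangent space: since $H$ is a subgroup of $AN$ acting on $AN$ by left translations, the orbit through $o$ is the subgroup $H$ itself, so $T_oN$ is (the image under $\frac12(1-\theta)$ of) $\g{h} = \R(aB+X)\oplus\g{w}\oplus\g{g}_{2\alpha}$, and the normal space $\nu_oN$ is spanned by $\lVert X\rVert^2 B - aX \in \g{a}\oplus\g{g}_\alpha$ together with $\g{g}_\alpha\ominus(\g{w}\oplus\R X)$; note $\g{g}_{2\alpha}$ is entirely tangent because $\g{g}_{2\alpha}\subset\g{h}$. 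A short computation with the bracket formula for $\g{a}\oplus\g{n}$ quoted in \S\ref{subsec:CHn} shows that $[\g{h},\g{h}]\subset\g{h}$ reduces the relevant structure constants to those involving $\langle JU,V\rangle$ for $U,V\in\g{w}$ and $\langle JX,\cdot\rangle$, so only a few terms survive.

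The key step is the Koszul-type formula for the Levi-Civita connection of the left-invariant metric on $AN$: for left-invariant vector fields $U,V$,
\[
2\langle\nabla_U V, W\rangle_{AN} = \langle[U,V],W\rangle_{AN} - \langle[V,W],U\rangle_{AN} + \langle[W,U],V\rangle_{AN}.
\]
I would use this to compute the second fundamental form $\mathrm{II}(U,V) = (\nabla_U V)^{\perp}$ for $U,V$ ranging over a convenient orthonormal basis of $\g{h}$, namely the unit vector in the direction $aB+X$, an orthonormal basis of $\g{w}$, and the unit vector $Z/\sqrt2$ of $\g{g}_{2\alpha}$. Most pairings of the normal vector $\lVert X\rVert^2 B - aX$ against these brackets vanish by the root-space relations and by $X\perp\g{w}$; the surviving contributions come from the diagonal terms $\mathrm{II}(e,e)$ for $e$ in $\g{w}$ and from $\mathrm{II}(Z,Z)$, each producing a multiple of $\lVert X\rVert^2 B - aX$. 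Summing the normal components over the orthonormal basis of $\g{h}$ and dividing by $\dim\g{h} = 2 + \dim\g{w}$ — being careful with the factor-$\tfrac12$ discrepancy between $\langle\cdot,\cdot\rangle$ and $\langle\cdot,\cdot\rangle_{AN}$ on $\g{n}$, which is exactly where the coefficient $3 + \dim\g{w}$ rather than $2+\dim\g{w}$ enters — should yield the stated formula. Finally I would apply $\frac12(1-\theta)$, which by Lemma~\ref{lemma:equivariance} is an isometry onto $\g{p}$ intertwining the two metrics, to rewrite $\mathcal H$ as an element of $\g{p}\cong T_o\C H^n$; since $\frac12(1-\theta)(\lVert X\rVert^2 B - aX)$ is proportional to $\lVert X\rVert^2 B - aX$ under the identifications used in the statement, the expression is unchanged in form.

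The main obstacle I expect is bookkeeping of the two inner products: $\langle X,Y\rangle_{AN} = \langle X_\g{a},Y_\g{a}\rangle + \tfrac12\langle X_\g{n},Y_\g{n}\rangle$, so an "orthonormal basis" of $\g{h}$ in one metric is not orthonormal in the other, and the normal vector must be normalized consistently. In particular $Z$ has $\langle Z,Z\rangle_{AN}=1$ but $\langle Z,Z\rangle = 2$, and each vector of $\g{w}\subset\g{g}_\alpha$ carries the same factor; tracking these factors correctly through the trace of the shape operator is what converts the naive count $2+\dim\g{w}$ of tangent directions into the coefficient $3+\dim\g{w}$, the contribution of the $\g{g}_{2\alpha}$-direction effectively counting with an extra weight. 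Once the normalization is pinned down, the remaining computation is a routine application of the bracket formula and Lemma~\ref{lemma:aux}.
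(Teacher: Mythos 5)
Your overall strategy coincides with the paper's: work in the solvable model $AN$ with its left-invariant metric, compute the second fundamental form of $H\cdot o$ from the Levi-Civita connection (the paper quotes the explicit connection formula from \cite{BD09}, \cite{BTV95} instead of rederiving it from the Koszul formula, but that is the same computation), identify the normal space as $\R(\lVert X\rVert^2B-aX)\oplus(\g{g}_\alpha\ominus(\g{w}\oplus\R X))$, and trace over an orthonormal basis of $\g{h}$. The difficulty is that the bookkeeping you describe would not reproduce the stated coefficient.

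Concretely, you claim that the only surviving diagonal terms are those coming from $\g{w}$ and from $Z$. The direction $aB+X$ also contributes: the connection formula gives $\nabla_{aB+X}(aB+X)=\tfrac12\lVert X\rVert^2B-\tfrac a2X=\tfrac12(\lVert X\rVert^2B-aX)$, which is entirely normal and supplies exactly one unit of the coefficient. The correct count is $1$ (from $aB+X$) plus $\dim\g{w}$ (each unit $W\in\g{w}$ gives $\nabla_WW=\tfrac12 B$) plus $2$ (from the unit vector $Z$, since $\nabla_ZZ=B$), totalling $3+\dim\g{w}$. Moreover, the doubled weight of the $\g{g}_{2\alpha}$-direction is not caused by the factor $\tfrac12$ between $\langle\cdot,\cdot\rangle$ and $\langle\cdot,\cdot\rangle_{AN}$ on $\g{n}$ (that rescaling affects $\g{g}_\alpha$ and $\g{g}_{2\alpha}$ identically); it comes from $Z$ lying in the root space of $2\alpha$ rather than $\alpha$, so that $\langle\nabla_ZZ,B\rangle=2\alpha(B)=1$ while $\langle\nabla_WW,B\rangle=\alpha(B)=\tfrac12$. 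Finally, the $\mathcal H$ of the lemma is the unnormalized trace of the second fundamental form, i.e.\ $\langle\mathcal H,\xi\rangle=\tr\mathcal S_\xi$ for a unit normal $\xi$; dividing by $\dim\g{h}=2+\dim\g{w}$ as you propose is incompatible with the stated formula. Following your recipe literally one lands on $\tfrac{1}{2(a^2+\lVert X\rVert^2)}(\lVert X\rVert^2B-aX)$ rather than the claimed expression. None of this affects the only use made of the lemma in the paper (deciding whether the mean curvature vanishes), but the computation as sketched does not prove the statement as written.
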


\begin{proof}
In order to shorten the notation, let us denote by
$\langle\,\cdot\,,\,\cdot\,\rangle$ the metric on $AN$ defined in
Section~\ref{sec:preliminaries}. Then, it is well known that the
Levi-Civita connection of $AN$ is given by (see for
example~\cite{BD09} or \cite{BTV95})
\[
{\nabla}_{aB+U+xZ}(bB+V+yZ)
=\left(\frac{1}{2}\langle U,V\rangle+xy\right)\!B-\frac{1}{2}\left(bU+yJU+xJV\right)
+\left(\frac{1}{2}\langle JU,V\rangle-bx\right)\!Z,
\]
where $a$, $b$, $x$, $y\in\R$, $U$, $V\in\g{g}_\alpha$, and all
vector fields are considered to be left-invariant. The normal space
to the orbit $H\cdot o$ is given by the left-invariant distribution
$\R\xi\oplus(\g{g}_\alpha\ominus(\g{w}\oplus\R X))$, where $\xi$ is
the unit vector
\[
\xi=\frac{1}{\lVert X\rVert\sqrt{a^2+\lVert X\rVert^2}}(\lVert X\rVert^2 B-a X).
\]
Then, the shape operator $\mathcal{S}_\eta$ with respect to a
left-invariant normal vector $\eta$ is given by the equation
$\mathcal{S}_\eta V=-(\nabla_V\eta)^\top$, where $(\cdot)^\top$ means
orthogonal projection onto $\g{h}$.

Bearing all this in mind and applying the formula for the
Levi-Civita connection above we get:
\begin{align*}
\mathcal{S}_\xi(aB+X) &=\frac{\lVert X\rVert}{2\sqrt{a^2+\lVert X\rVert^2}}(aB+X),\\
\mathcal{S}_\xi(W)    &=\frac{\lVert X\rVert}{2\sqrt{a^2+\lVert X\rVert^2}}
    \Bigl(W+a\frac{\langle JW,X\rangle}{\lVert X\rVert^2}Z\Bigr),
    \text{ for each $W\in\g{w}$},\\
\mathcal{S}_\xi(Z)    &=\frac{\lVert X\rVert}{2\sqrt{a^2+\lVert X\rVert^2}}
    \Bigl(-\frac{a}{\lVert X\rVert^2}(JX)^\top+2Z\Bigr).
\end{align*}
This implies
\[
\tr \mathcal{S}_\xi = \frac{(3+\dim\g{w})\lVert X\rVert}{2\sqrt{a^2+\lVert X\rVert^2}}.
\]
On the other hand, if $U\in\g{g}_\alpha\ominus(\g{w}\oplus\R X)$ we
have
\begin{align*}
\mathcal{S}_U(aB+X) &=\frac{1}{2}\langle JU,X\rangle Z,\\
\mathcal{S}_U(W)    &=\frac{1}{2}\langle JU,W\rangle Z,\text{ for each $W\in\g{w}$},\\
\mathcal{S}_U(Z)    &=\frac{1}{2}(JU)^\top.
\end{align*}
Hence $\tr \mathcal{S}_U = 0$.

Altogether we have proved the lemma.
\end{proof}

We can also obtain easily the following result.

\begin{corollary}\label{corol:meanCurvature}
If $\g{h}=\g{b}\oplus\g{w}\oplus\g{g}_{2\alpha}$, with
$\g{b}\in\{0,\g{a}\}$ and $\g{w}$ a subspace of $\g{g}_\alpha$, then
the mean curvature vector of $H\cdot o$ is
\[
\mathcal{H}=
\begin{cases}
0, &\text{if $\g{b}=\g{a}$},\\
\frac{1}{2}(2+\dim\g{w})B, &\text{if $\g{b}=0$.}
\end{cases}
\]
\end{corollary}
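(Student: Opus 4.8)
The plan is to argue exactly as in the proof of Lemma~\ref{lemma:meanCurvature}, working in the solvable model $AN$ with its left-invariant metric $\langle\,\cdot\,,\,\cdot\,\rangle$ and the formula for its Levi-Civita connection recalled there. In both cases $H$ is a connected subgroup of $AN$ (indeed $\g{h}\subset\g{a}\oplus\g{n}$ and it is a subalgebra since $[\g{w},\g{w}]\subset\g{g}_{2\alpha}\subset\g{h}$), so the orbit $H\cdot o$ may be identified with $H$ itself, with $T_o(H\cdot o)=\g{h}$ and normal space $\nu_o(H\cdot o)=(\g{a}\ominus\g{b})\oplus(\g{g}_\alpha\ominus\g{w})$. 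As in Lemma~\ref{lemma:meanCurvature}, the shape operator with respect to a left-invariant normal field $\eta$ is $\mathcal{S}_\eta V=-(\nabla_V\eta)^\top$, where $(\,\cdot\,)^\top$ denotes orthogonal projection onto $\g{h}$, and I would compute $\tr\mathcal{S}_\eta$ against the basis of $\g{h}$ formed by $B$ (present only when $\g{b}=\g{a}$), an orthonormal basis $\{W_i\}$ of $\g{w}$, and the vector $Z$, which is a unit vector for $\langle\,\cdot\,,\,\cdot\,\rangle$ because $\langle Z,Z\rangle_{AN}=1$.

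First I would record the common fact that $\tr\mathcal{S}_{U_0}=0$ for every unit vector $U_0\in\g{g}_\alpha\ominus\g{w}$. Substituting $\eta=U_0$ into the connection formula and using $W\perp U_0$ gives, for a tangent vector $\zeta=aB+W+xZ$ with $W\in\g{w}$,
\[
\nabla_\zeta U_0=-\tfrac{x}{2}JU_0+\tfrac12\langle JW,U_0\rangle Z,\qquad\text{so}\qquad \mathcal{S}_{U_0}\zeta=\tfrac{x}{2}(JU_0)^\top-\tfrac12\langle JW,U_0\rangle Z,
\]
where the $Z$-term already lies in $\g{g}_{2\alpha}\subset\g{h}$. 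Evaluating on $\zeta=B$ gives $0$; on $\zeta=W_i$ it gives a multiple of $Z$, which is orthogonal to $W_i$; and on $\zeta=Z$ it gives a vector in $\g{w}$, which is orthogonal to $Z$. Hence every diagonal entry of $\mathcal{S}_{U_0}$ vanishes and $\tr\mathcal{S}_{U_0}=0$.

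If $\g{b}=\g{a}$, the normal space is exactly $\g{g}_\alpha\ominus\g{w}$, so the previous step already yields $\mathcal{H}=0$. If $\g{b}=0$, the only additional normal direction is the unit vector $B$, and I would compute $\mathcal{S}_B$ directly: the connection formula gives $\nabla_{W}B=-\tfrac12 W$ for $W\in\g{w}$ and $\nabla_Z B=-Z$, so $\mathcal{S}_B=\tfrac12\,\id$ on $\g{w}$ and $\mathcal{S}_B=\id$ on $\g{g}_{2\alpha}$, whence $\tr\mathcal{S}_B=\tfrac12\dim\g{w}+1$. Therefore $\mathcal{H}=(\tr\mathcal{S}_B)\,B=\tfrac12(2+\dim\g{w})B$, as claimed.

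I do not expect any genuine obstacle: the only points requiring care are keeping track of which summands survive under the projection $(\,\cdot\,)^\top$ onto $\g{h}$, and using the normalizations $\langle B,B\rangle_{AN}=\langle Z,Z\rangle_{AN}=1$. As an alternative for the case $\g{b}=\g{a}$ with $\g{g}_\alpha\ominus\g{w}\neq0$, one can instead invoke Lemma~\ref{lemma:meanCurvature} with $a=1$ and let $X\to0$ inside $\R(B+X)\oplus\g{w}\oplus\g{g}_{2\alpha}$: since the orbit has constant dimension along this family and mean curvature varies continuously with the subalgebra, while $\tfrac{3+\dim\g{w}}{2(1+\lVert X\rVert^2)}(\lVert X\rVert^2 B-X)\to0$, this recovers $\mathcal{H}=0$.
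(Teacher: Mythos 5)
Your computation is correct and is exactly the argument the paper intends: the corollary is stated without proof ("we can also obtain easily"), the implicit proof being the same left-invariant connection computation as in Lemma~\ref{lemma:meanCurvature}, which you carry out with the right normalizations ($B$ and $Z$ unit for $\langle\cdot,\cdot\rangle_{AN}$) and the correct trace convention matching the lemma. No gaps.
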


Now we finish the proof of Theorem~B.

\begin{proof}[Proof of Theorem~B]
In the following we consider some orbits of \emph{minimum orbit type}, as in Proposition~\ref{prop:MinimumOrbitType}; some of them are also \emph{minimal submanifolds} in the sense that their mean curvature vector vanishes. If this is the case, we will say that they have \emph{vanishing mean curvature} in order to avoid confusion.

Let $H_1$ and $H_2$ be two subgroups of $U(1,n)$ acting polarly on
$\C H^n$, and assume that these two actions are orbit equivalent. Let
us denote by $\g{h}_1$ and $\g{h}_2$ the Lie algebras of $H_1$ and
$H_2$. We distinguish three main cases.

\subsubsection*{{Case} \emph{1}} First of all, assume that the actions of $H_1$ and $H_2$ are given by
Theorem~A(\ref{th:main:so}), that is,
$\g{h}_i=\g{q}_i\oplus\g{so}(1,k_i)$, $i\in\{1,2\}$. The group $H_i$
has a totally real, totally geodesic $\R H^{k_i}$ as a singular orbit
of minimum orbit type.
This immediately implies $k_1=k_2$. If $k_1=k_2=n$ then both actions
are orbit equivalent to the action of $SO^0(1,n)$, according to
Theorem~\ref{th:SubSO}, and thus the isotropy actions of $Q_1$ and
$Q_2$ are both trivial. Assume $k_1=k_2<n$. It follows from
Section~\ref{sec:totally_geodesic} that $H_i$ restricts to a
cohomogeneity one action on the corresponding totally geodesic $\C
H^{k_i}$ that contains this $\R H^{k_i}$. It follows from
Theorem~\ref{theorem:ProdAct} that the slice representation of $H_i$
at $o$ is polar and a section of this action is of the form
$\ell_i\oplus\Sigma_i$, where $\ell_i$ is a line in $T_o\C H^{k_i}$
and $\Sigma_i$ is totally real in the complex subspace $T_o\C
H^n\ominus T_o\C H^{k_i}$. The unitary representation of
$Q_i$ on the complex vector space $T_o\C H^n\ominus T_o\C H^{k_i}$ cannot have trivial
factors, since otherwise a maximal trivial factor would be complex, and therefore the section would not be totally real. Hence, the only orbit of minimal dimension must be the
totally geodesic $\R H^{k_i}$. Since the actions of $H_1$ and $H_2$
are orbit equivalent, we conclude that $H_1\cdot o$ must be mapped to
$H_2\cdot o$. Now it is easy to deduce that the actions of $Q_1$ and
$Q_2$ must be orbit equivalent.

\subsubsection*{{Case} \emph{2}} Assume now that $H_1$ is given by
Theorem~A(\ref{th:main:so}), and $H_2$ is given by Theorem~A(\ref{th:main:parabolic}), that is,
$\g{h}_1=\g{q}_1\oplus\g{so}(1,k)$ and
$\g{h}_2=\g{q}_2\oplus\g{b}\oplus\g{w}\oplus\g{g}_{2\alpha}$. By
assumption, there is an isometry $\phi$ such that $\phi(H_1\cdot
p)=H_2\cdot\phi(p)$ for any $p\in\C H^n$. We know that $H_1\cdot o$ is a
totally real, totally geodesic $\R H^k$, and a singular orbit of
minimum orbit type. Let $g\in AN$ be such that $\phi(o)=g(o)$ and define
$\tilde{H}_2$ to be the connected Lie subgroup of $SU(1,n)$ whose Lie
algebra is $\tilde{\g{h}}_2=\g{b}\oplus\g{w}\oplus\g{g}_{2\alpha}$.
As $\phi(H_1\cdot o)$ must also be an orbit of $H_2$ of minimal
dimension, it follows that $\tilde{H}_2\cdot g(o)=H_2\cdot g(o)$ since
${H}_2\cdot g(o)$ has minimal dimension and contains $\tilde{H}_2\cdot g(o)$. We have $\tilde{H}_2\cdot g(o)=g(g^{-1}\tilde{H}_2g)\cdot
o=g(I_{g^{-1}}(\tilde{H}_2)\cdot o)$, from where it follows that
$\tilde{H}_2\cdot g(o)$ is congruent to the orbit
of $I_{g^{-1}}(\tilde{H}_2)$ through $o$. Since $g\in AN$, it is not difficult to
deduce from the bracket relations of $AN$ that
$\Ad(g^{-1})\tilde{\g{h}}_2=\R(aB+X)\oplus\g{w}\oplus\g{g}_{2\alpha}$
for some $a\in\R$ and $X\in\g{g}_\alpha\ominus\g{w}$. The fact that
$\Ad(g^{-1})\tilde{\g{h}}_2$ is totally real immediately implies
$a=0$, and thus $\g{b}=0$. Moreover, $X=0$ for dimension reasons, and $\g{w}$ is
totally real. Then,
Corollary~\ref{corol:meanCurvature} implies that
$\Ad(g^{-1})\tilde{\g{h}}_2=\g{w}\oplus\g{g}_{2\alpha}$ has mean
curvature vector $\mathcal{H}=\frac{1}{2}(2+\dim\g{w})B\neq 0$. In
particular, $H_2\cdot g(o)$ cannot be totally geodesic. Therefore,
polar actions given by Theorem~A(\ref{th:main:so}) cannot be
orbit equivalent to polar actions given by
Theorem~A(\ref{th:main:parabolic}).

\subsubsection*{{Case} \emph{3}} Finally, assume that $H_1$ and $H_2$ are
given by Theorem~A(\ref{th:main:parabolic}), that
is, $\g{h}_i=\g{q}_i\oplus\g{b}_i\oplus\g{w}_i\oplus\g{g}_{2\alpha}$,
$i\in\{1,2\}$.

Let $\g{b}_1=\g{a}$, $\g{b}_2=0$. The orbit $H_1\cdot o$ is of minimum orbit type, and it has vanishing mean curvature as Corollary~\ref{corol:meanCurvature} shows. This orbit
must be mapped to an orbit of $H_2$ of minimal dimension. Let
$\tilde{H}_2$ be the Lie subgroup of $SU(1,n)$ whose Lie algebra is
$\tilde{\g{h}}_2=\g{w}_2\oplus\g{g}_{2\alpha}$. Assume $H_1\cdot o$ is
mapped to $H_2\cdot g(o)$ with $g\in AN$. Since $H_2\cdot o$ has minimal dimension, we must have $H_2\cdot g(o)=\tilde{H}_2\cdot
g(o)$, as $H_2\cdot g(o)$ has minimal dimension and $\tilde{H}_2\cdot g(o)\subset H_2\cdot g(o)$. We have
$\tilde{H}_2\cdot g(o)=g(g^{-1}\tilde{H}_2 g\cdot o)$, and it is easy
to show using the bracket relations of $AN$ that
$\Ad(g^{-1})\tilde{\g{h}}_2=\tilde{\g{h}}_2=\g{w}_2\oplus\g{g}_{2\alpha}$.
Corollary~\ref{corol:meanCurvature} then implies that ${H}_2\cdot
g(o)=\tilde{H}_2\cdot g(o)$ has non-vanishing mean curvature. This contradicts the fact that the mean curvature vector of $H_1\cdot o$ is zero. Therefore
$\g{b}_1=\g{b}_2$.

Assume that $\g{b}_1=\g{b}_2=0$. Let $\phi$ be an isometry
such that $\phi(H_1\cdot o)=H_2\cdot \phi(o)$, and take $g\in AN$
such that $\phi(o)=g(o)$. Let $\tilde{H}_2$ be the Lie subgroup of
$SU(1,n)$ whose Lie algebra is
$\tilde{\g{h}}_2=\g{w}_2\oplus\g{g}_{2\alpha}$, and recall that
$\Ad(g^{-1})\tilde{\g{h}}_2=\tilde{\g{h}}_2$, and thus
$I_{g^{-1}}(\tilde{H}_2)=\tilde{H}_2$. Since $H_2\cdot g(o)$ must be
of minimal dimension, we have $H_2\cdot g(o)=\tilde{H}_2\cdot
g(o)=g(g^{-1}\tilde{H}_2g\cdot o)=g(\tilde{H}_2\cdot o)=g(H_2\cdot
o)$, and thus, $g^{-1}\phi(H_1\cdot
o)=g^{-1}H_2\cdot\phi(o)=g^{-1}H_2g\cdot o=H_2\cdot o$. By composing
with an element of $H_2$ we can further assume that $\phi(H_1\cdot
o)=H_2\cdot o$, and $\phi(o)=o$. In particular, we have
$\phi_*(T_o(H_1\cdot o))=T_o(H_2\cdot o)$, that is,
$\phi_*((1-\theta)(\g{w}_1\oplus\g{g}_{2\alpha}))
=(1-\theta)(\g{w}_2\oplus\g{g}_{2\alpha})$. We have the K\"{a}hler angle
decompositions (see Subsection~\ref{subsec:realsubspace}),
$\g{w}_i=\oplus_{\varphi\in\Phi_i}\g{w}_{i,\varphi}$, and thus,
$(1-\theta)\g{w}_i=\oplus_{\varphi\in\Phi_i}(1-\theta)\g{w}_{i,\varphi}$.
Since $\phi_*$ maps real subspaces of constant K\"{a}hler angle $\varphi$ to real subspaces of constant K\"{a}hler angle $\varphi$,
we must have $\Phi:=\Phi_1=\Phi_2$
and
\[
\phi_*(1-\theta)(\g{w}_{1,\pi/2}\oplus\g{g}_{2\alpha})
=(1-\theta)(\g{w}_{2,\pi/2}\oplus\g{g}_{2\alpha}),\quad
\phi_*(1-\theta)(\g{w}_{1,\varphi})
=(1-\theta)(\g{w}_{2,\varphi}),
\]
for all $\varphi\in\Phi\setminus\{\pi/2\}$.
As a consequence, $\dim\g{w}_{1,\varphi}=\dim\g{w}_{2,\varphi}$ for
all $\varphi\in\Phi$. It follows from Remark~\ref{rem:KahlerAngles}
that there exists $k\in K_0$ such that
$\Ad(k)\g{w}_{1,\varphi}=\g{w}_{2,\varphi}$ for all $\varphi\in\Phi$,
and thus
$\Ad(k)(\g{w}_1\oplus\g{g}_{2\alpha})=\g{w}_2\oplus\g{g}_{2\alpha}$.
Let $\hat{H}_2=k^{-1}H_2 k$. Obviously, the actions of $H_2$ and
$\hat{H}_2$ on $\C H^n$ are orbit equivalent. Indeed,
$\hat{\g{h}}_2=\hat{\g{q}}_2\oplus\g{w}_1\oplus\g{g}_{2\alpha}$ for
some subalgebra $\hat{\g{q}}_2$ of $\g{k}_0$. Since the actions of
$H_1$ and $\hat{H}_2$ are orbit equivalent, their slice
representations are orbit equivalent and so are the actions of $Q_1$
and $\hat{Q}_2$ on the normal space of $H_1\cdot o=\hat{H}_2\cdot o$ (note that the action of $K_0$ on $\g{a}$ is
trivial). Therefore, $H_1$ and $H_2$ are orbit equivalent if and only
if there exists $k\in K_0$ such that $\Ad(k)\g{w}_1=\g{w}_2$, and the
slice representations
$Q_i\times(1-\theta)\g{w}_i^\perp\to(1-\theta)\g{w}_i^\perp$,
$i\in\{1,2\}$, are orbit equivalent.

We now deal with the last possibility: $\g{b}_1=\g{b}_2=\g{a}$. The
proof goes along the lines of the previous subcase, with some
differences that we will point out. Let $\phi$ be an isometry such
that $\phi(H_1\cdot o)=H_2\cdot\phi(o)$ and take $g\in AN$ such that
$\phi(o)=g(o)$. As before, we consider $\tilde{H}_2$ the subgroup of
$SU(1,n)$ whose Lie algebra is
$\tilde{\g{h}}_2=\g{a}\oplus\g{w}_2\oplus\g{g}_{2\alpha}$. Since
$H_2\cdot g(o)$ is of minimal dimension, $H_2\cdot g(o)=\tilde{H}_2\cdot
g(o)=g(g^{-1}\tilde{H}_2g\cdot o)$, so $H_2\cdot g(o)$ is congruent
to the orbit through $o$ of the Lie subgroup of $SU(1,n)$
whose Lie algebra is of the form
$\Ad(g)\tilde{\g{h}}_2=\R(aB+X)\oplus\g{w}_2\oplus\g{g}_{2\alpha}$,
for some $a\in\R$ and $X\in\g{g}_\alpha\ominus\g{w}$. Since $H_1\cdot o$ has vanishing mean curvature by Corollary~\ref{corol:meanCurvature}, according to Lemma~\ref{lemma:meanCurvature} we must have that $X=0$.
In this case we get
$\Ad(g)\tilde{\g{h}}_2=\g{h}_2$. By composing with an element of $H_2$ we
can further assume that $\phi(o)=o$. Arguing as in the previous case,
we have the K\"{a}hler angle decompositions
$\g{w}_i=\oplus_{\varphi\in\Phi_i}\g{w}_{i,\varphi}$, and thus, it
follows that $\Phi:=\Phi_1=\Phi_2$ and
\[
\phi_*(1-\theta)(\g{a}\oplus\g{w}_{1,0}\oplus\g{g}_{2\alpha})
=(1-\theta)(\g{a}\oplus\g{w}_{2,0}\oplus\g{g}_{2\alpha}),\quad
\phi_*(1-\theta)(\g{w}_{1,\varphi})
=(1-\theta)\g{w}_{2,\varphi},
\]
for all $\varphi\in\Phi\setminus\{0\}$.
Again, by Remark~\ref{rem:KahlerAngles}, it follows that there exists
$k\in K_0$ such that $\Ad(k)\g{w}_{1,\varphi}=\g{w}_{2,\varphi}$ for
all $\varphi\in\Phi$, and therefore, $H_1$ and $H_2$ are orbit
equivalent if and only if there exists $k\in K_0$ such that
$\Ad(k)\g{w}_1=\g{w}_2$, and the slice representations
$Q_i\times(1-\theta)\g{w}_i^\perp\to(1-\theta)\g{w}_i^\perp$,
$i\in\{1,2\}$, are orbit equivalent.
\end{proof}


\end{document}